\theoremstyle{plain}
\newtheorem{theorem}{Theorem}
\newtheorem{lemma}{Lemma}
\theoremstyle{definition}
\newtheorem{definition}{Definition}
\newtheorem{remark}{Remark}
\long\def\symbolfootnote[#1]#2{\begingroup
\def\thefootnote{\fnsymbol{footnote}}\footnote[#1]{#2}\endgroup}
\numberwithin{equation}{section}
\numberwithin{equation}{section}
\begin{document}
\title[Equivalence of solutions to non-homogeneous nonlocal double phase problems]{On equivalence of weak and viscosity solutions to nonlocal double phase problems with nonhomogeneous data}
\author[S. Ghosh, R. Lakshmi and C. Zhang]{Sekhar Ghosh, R. Lakshmi and Chao Zhang}

\address[Sekhar Ghosh]{Department of Mathematics, National Institute of Technology Calicut, Kozhikode, Kerala, India - 673601}
\email{sekharghosh1234@gmail.com / sekharghosh@nitc.ac.in}
\address[R. Lakshmi]{Department of Mathematics, National Institute of Technology Calicut, Kozhikode, Kerala, India - 673601}
\email{lakshmir1248@gmail.com / lakshmi\_p220223ma@nitc.ac.in}
\address[Chao Zhang]{School of Mathematics and Institute for Advanced Study in Mathematics, Harbin Institute of Technology, Harbin 150001, China}
\email{czhangmath@hit.edu.cn}
\subjclass[2020]{35D30 $\cdot$ 35D40 $\cdot$ 35R05 $\cdot$  35R11 $\cdot$ 35B65}
\keywords{Weak Solutions, Viscosity Solutions, Nonlocal Double Phase Equation, Regularity.}

\begin{abstract} 
This work focuses on the nonhomogeneous nonlocal double phase problem 
\begin{align*}
    L_au(x)=f(x,u,D_s^p u, D_{a,t}^q u) \text{ in } \Omega,
\end{align*}
where $\Omega\subset\mathbb{R}^N$ is a bounded domain with Lipschitz boundary, $0<s,t<1<p\leq q<\infty$ with $tq\leq sp$ and the operator $L_a$ is defined as 
\begin{align*}
    L_a u(x)&=2\operatorname{P.V.}\int_{\mathbb{R}^N}|u(x)-u(y)|^{p-2}(u(x)-u(y))K_{s,p}(x,y)\\
    &\ \ \ +2\operatorname{P.V.}\int_{\mathbb{R}^N}a(x,y)|u(x)-u(y)|^{q-2}(u(x)-u(y))K_{t,q}(x,y)dy.
\end{align*}
We establish the equivalence between weak and viscosity solutions under boundedness and continuity assumptions. In addition, the local boundedness of weak solutions in some special cases on $f$ is also obtained using the notion of De Giorgi classes.
\end{abstract}

\maketitle

\tableofcontents

\section{Introduction}

The relation between various types of solutions to partial differential equations has been a prominent area of research over the past years. The study of equivalence of solutions for the linear equations started with the celebrated works due to Lions \cite{Lions1983} and Ishii \cite{I1995}. For the quasilinear case, the equivalence of solutions for the homogeneous $p$-Laplace equation,
\begin{equation}\label{p-homo}
    -\Delta_p u =0,
\end{equation}
where $\Delta_p$ is the $p$-Laplacian defined by $\Delta_p u:=\operatorname{div}(|\nabla u|^{p-2}\nabla u)$ and the parabolic counterpart of \eqref{p-homo} has been explored by Juutinen \textit{et al.} \cite{JLM2001}. Later, a new proof for the equivalence of weak and viscosity solutions to \eqref{p-homo} was given by Julin and Juutinen \cite{JJ2012}.  Medina and Ochoa \cite{MO2019} investigated the relation between weak and viscosity solutions to the nonhomogeneous problem
\begin{equation}\label{p-nonhomo}
     -\Delta_p u =f(x,u,\nabla u).
\end{equation}
The equivalence of various notions of solutions to the normalized $p(x)$-Laplace equation has been explored by Siltakoski \cite{S2018}. For the studies on the regularity of solutions to equations involving $p$-Laplacian, we refer to \cite{E1982, Lewis1983, D1983}.  
\par The equivalence of weak and viscosity solutions to the homogeneous equation for the fractional $p$-Laplacian $(-\Delta_p)^s$ defined by
$$(-\Delta_p)^s u(x)=C_{N,p,s}\operatorname{P.V. }\int_{\mathbb{R}^N}\frac{| u(x)-u(y)|^{p-2}(u(x)-u(y))}{| x-y |^{N+ps}} dy,$$
where $C_{N,p,s}$ is the normalizing constant has been discussed by Korvenp{\"a}{\"a} \textit{et al.} \cite{KKL2019}.  Barrios and Medina \cite{BM2021} proved that the weak and viscosity solutions agree for the nonhomogeneous fractional $p$-Laplace equation
\begin{equation}\label{frac p-nonhomo}
    (-\Delta_p)^s u =f(x,u, D_s^p u)
\end{equation}
on a bounded domain $\Omega$ under certain conditions on $f$. Here, $D_s^p u$ is the fractional $p$-gradient defined by \eqref{p-frac grad}. Caffarelli and Silvestre \cite{CS2007} proved a nonlocal Harnack inequality for the homogeneous fractional Laplacian via an extension method. Di Castro \textit{et al.} \cite{DKP2014} obtained a Harnack-type inequality for homogeneous nonlocal equations of the fractional $p$-Laplacian type. The authors in \cite{DKP2016} also established the local boundedness and local H\"older continuity of solutions to homogeneous fractional $p$-Laplace type equations.
The local boundedness, local H\"older continuity and a Harnack type inequality for solutions to nonhomogeneous fractional $p$-Laplace type equations under certain conditions were obtained by Cozzi \cite{C2017} using fractional De Giorgi classes, extending the work of  Caffarelli \textit{et al.} \cite{CCV2011} for fractional type parabolic equations with $p=2$. For the fractional $p$-Laplace type equations, H\"older continuity of viscosity solutions has been explored by Lindgren \cite{L2016}. For more results on the regularity of solutions to similar nonlocal problems, we refer to \cite{IMS2016, BKO2023, CL2018, BL2017} and the references therein.
\par A relation between weak and viscosity solutions to the mixed problem
\begin{equation}\label{mix}
    -\Delta_p u+ (-\Delta_p)^s u=c(x)|u|^{p-2}u
\end{equation}
for a continuous function $c$, has been studied by Shang and Zhang \cite{SZ2023}. Lakshmi and Ghosh \cite{LG2025}, established the equivalence of weak and viscosity solutions for the homogeneous mixed local and nonlocal type equation on a bounded domain in $\mathbb{R}^N$. There is a vast literature on the regularity of solutions equations of mixed local and nonlocal type (see \cite{BDVV2022, BV2023, GK2022, GL2023, SVWZ2022, BMS2023, DM2024} and the references therein). 

\par The equivalence of solutions to the local double phase equation,
\begin{equation}\label{DP-nonhomo}
    -\operatorname{div}(|\nabla u|^{p-2}\nabla u+a(x)|\nabla u|^{q-2}\nabla u)=f(x, u , \nabla u), \ 1<p\leq q<\infty, \ a(x)\geq 0
\end{equation}
has been studied for the case $f\equiv 0$ by Fang and Zhang \cite{FZ2022} and the nonhomogeneous case by Fang \textit{et al.} \cite{FRZ2024}. The authors in \cite{FRZ2024} also proved the Lipschitz continuity of bounded viscosity solutions to \eqref{DP-nonhomo}. A Harnack inequality for double phase functionals was studied by Baroni \textit{et al.}\cite{BCM2015}. Later, Byun and Oh \cite{BO2020} utilized the De Giorgi approach to study the optimum conditions on $a(\cdot)$ for the regularity of generalized double phase functionals. The regularity of solutions to double phase equations were further explored in \cite{BCM2018, CM2015, M1991}.
Recently, equations involving the nonlocal double phase equation given by
\begin{align}\label{D-nonhomo}
   \operatorname{P.V.}\int_{\mathbb{R}^N}&\frac{|u(x)-u(y)|^{p-2}(u(x)-u(y))}{|x-y|^{N+sp}}dy \nonumber\\
 &+\operatorname{P.V.}\int_{\mathbb{R}^N}\frac{a(x,y)|u(x)-u(y)|^{q-2}(u(x)-u(y))}{|x-y|^{N+tq}}dy,
\end{align}
where $p,q\in (1,\infty)$ and $s,t\in(0,1)$ has also gained considerable interest among researchers. De Filippis and Palatucci \cite{DP2019} first established the H\"older continuity of viscosity solutions to nonhomogeneous equations of operators with the model \eqref{D-nonhomo} under certain conditions. Scott and Mengesha \cite{SM2022} studied a self-improving property of weak solutions to nonhomogeneous equations of \eqref{D-nonhomo}. A relation between weak and viscosity solutions to the homogeneous nonlocal double phase equation and the the local H\"older continuity of weak solutions were established by Fang and Zhang in \cite{FZ2023}. The local boundedness and local H\"older continuity of weak solutions to the homogeneous nonlocal double phase equation has been proved by Byun \textit{et al.} \cite{BOS2022}.

\par Let $\Omega \subset \mathbb{R}^N$ be a bounded domain with Lipschitz boundary and $0<s,t<1<p\leq q<\infty$ with $tq \leq sp$. We study the equivalence of weak and viscosity solutions to the problem
\begin{align}\label{D}
   L_a u(x)=f(x,u,D_s^p u, D_{a,t}^q u) \text{ in } \Omega,
\end{align}
where $f :\Omega \times \mathbb{R} \times \mathbb{R} \times \mathbb{R} \rightarrow \mathbb{R}$, $a(x,y)\geq 0$ for all $x,y \in \mathbb{R}^N$ and the operator $L_a$ is defined by
\begin{align*}
    L_a u(x)&=2\operatorname{P.V.}\int_{\mathbb{R}^N}|u(x)-u(y)|^{p-2}(u(x)-u(y))K_{s,p}(x,y)dy\\
    &\ \ \ +2\operatorname{P.V.}\int_{\mathbb{R}^N}a(x,y)|u(x)-u(y)|^{q-2}(u(x)-u(y))K_{t,q}(x,y)dy.
\end{align*}
Moreover, the local boundedness of weak solutions to problem \eqref{D} for a particular scenario is also obtained. 
\par Before stating the main theorems, we initially give the comparison principle between the weak sub and supersolutions to problem \eqref{D} (see, Definition \ref{D-WS}).
\begin{definition}[Comparison Principle]\label{D-CP}
    Let $\mathcal{D}\subset \Omega$ be an open set and $(u,f)$ be a weak supersolution to problem \eqref{D} in $\mathcal{D}$. Then, $(u,f)$ satisfies the comparison principle if for every weak subsolution $(v,f)$ to problem \eqref{D} in $\mathcal{D}$ with $u\geq v$ in $\mathbb{R}^N \setminus \mathcal{D}$, we have $u\geq v$ in $\mathbb{R}^N$.
\end{definition}
The comparison principle has not been obtained yet even for problem \eqref{D} with $a\equiv 0$, i.e., the non-homogeneous fractional $p$-Laplace equation. In this case, several types of the functions $f$ for which the comparison principle holds has been discussed in \cite{BM2021}. We provide some examples and discuss the proof of the comparison principle for some special cases of problem \eqref{D}.
\par Next, we state our first main result.
\begin{theorem}\label{D-T1}
    Let $u$ be a continuous weak supersolution to problem \eqref{D} and let the comparison principle (Definition \ref{D-CP}) hold. Assume that the map $f=f(x,t,\zeta,\eta)$ is continuous in $x$ and $t$ and is Lipschitz continuous in $\zeta$ and $\eta$. Then, $u$ is a viscosity supersolution to problem \eqref{D}.
\end{theorem}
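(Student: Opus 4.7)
The plan is to argue by contradiction and to exploit the comparison principle (Definition~\ref{D-CP}) via a small upward shift of a carefully chosen test function. Suppose that $u$ is not a viscosity supersolution of \eqref{D}. Then there exist $x_{0}\in\Omega$, $r_{0}>0$, and $\varphi\in C^{2}(B_{r_{0}}(x_{0}))$ with $\varphi(x_{0})=u(x_{0})$ and $\varphi\le u$ on $B_{r_{0}}(x_{0})$, such that, setting
\[
\varphi_{r_{0}}(x):=\begin{cases}\varphi(x),&x\in B_{r_{0}}(x_{0}),\\ u(x),&x\in\mathbb{R}^{N}\setminus B_{r_{0}}(x_{0}),\end{cases}
\]
one has $L_{a}\varphi_{r_{0}}(x_{0})<f(x_{0},\varphi_{r_{0}}(x_{0}),D_{s}^{p}\varphi_{r_{0}}(x_{0}),D_{a,t}^{q}\varphi_{r_{0}}(x_{0}))$. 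A standard perturbation $\varphi\rightsquigarrow \varphi-|x-x_{0}|^{2k}$ with $k$ large (which preserves the value and many derivatives at $x_{0}$ and contributes only an $O(r_{0}^{2k})$ correction to $L_{a}$ and to the fractional gradients) lets me further assume $\varphi<u$ on $\overline{B_{r_{0}}(x_{0})}\setminus\{x_{0}\}$.

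Since $\varphi$ is smooth on $B_{r_{0}}(x_{0})$ and $u$ is continuous with the tail behavior required for the nonlocal integrals to make sense, the maps $x\mapsto L_{a}\varphi_{r_{0}}(x)$, $x\mapsto D_{s}^{p}\varphi_{r_{0}}(x)$, and $x\mapsto D_{a,t}^{q}\varphi_{r_{0}}(x)$ are continuous on $B_{r_{0}}(x_{0})$. Combining this with continuity of $f$ in $(x,t)$ and its Lipschitz dependence on $(\zeta,\eta)$, there exist $\rho\in(0,r_{0})$ and $\delta>0$ with
\[
L_{a}\varphi_{r_{0}}(x)\;<\;f\bigl(x,\varphi_{r_{0}}(x),D_{s}^{p}\varphi_{r_{0}}(x),D_{a,t}^{q}\varphi_{r_{0}}(x)\bigr)-2\delta,\qquad x\in B_{\rho}(x_{0}).
\]
The heart of the argument is then the shifted function
\[
\psi(x):=\varphi_{r_{0}}(x)+\eta\ \text{ in }B_{\rho}(x_{0}),\qquad \psi(x):=u(x)\ \text{ in }\mathbb{R}^{N}\setminus B_{\rho}(x_{0}),
\]
with $\eta>0$ to be chosen.

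I would then check that $\psi$ is a weak subsolution of \eqref{D} on $B_{\rho/2}(x_{0})$. Because $\psi(x)-\psi(y)$ equals $\varphi_{r_{0}}(x)-\varphi_{r_{0}}(y)$ whenever $x,y$ lie on the same side of $\partial B_{\rho}(x_{0})$, the difference $L_{a}\psi-L_{a}\varphi_{r_{0}}$ only sees pairs crossing $\partial B_{\rho}(x_{0})$, where the increment changes by exactly $\eta$. A mean value expansion of $t\mapsto|t|^{p-2}t$ and $t\mapsto|t|^{q-2}t$, together with the kernel bounds on $K_{s,p}$, $K_{t,q}$ and the (local) boundedness of $u$, yields
\[
|L_{a}\psi(x)-L_{a}\varphi_{r_{0}}(x)|+|D_{s}^{p}\psi(x)-D_{s}^{p}\varphi_{r_{0}}(x)|+|D_{a,t}^{q}\psi(x)-D_{a,t}^{q}\varphi_{r_{0}}(x)|\;\le\; C\eta
\]
uniformly for $x\in B_{\rho/2}(x_{0})$. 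Combined with the Lipschitz property of $f$ in its last two arguments and its continuity in $t$, choosing $\eta$ small enough preserves the strict pointwise inequality for $\psi$, so $\psi$ is a classical, hence weak, subsolution of \eqref{D} on $B_{\rho/2}(x_{0})$.

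Finally, since $\psi=u$ outside $B_{\rho/2}(x_{0})$, we have $u\ge\psi$ in $\mathbb{R}^{N}\setminus B_{\rho/2}(x_{0})$, and the comparison principle forces $u\ge\psi$ on all of $\mathbb{R}^{N}$. Evaluating at $x_{0}$ gives $u(x_{0})\ge\psi(x_{0})=\varphi(x_{0})+\eta=u(x_{0})+\eta$, the desired contradiction. I expect the main obstacle to be the third paragraph: the nonlocality of $L_{a}$, $D_{s}^{p}$ and $D_{a,t}^{q}$ means that an $\eta$-shift confined to $B_{\rho}(x_{0})$ affects each of these quantities through their interactions with the exterior, so preserving the strict gap $2\delta$ requires a tight, uniform-in-$x$ $O(\eta)$ control of every resulting correction term. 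This is precisely where the boundedness and continuity of $u$, the Lipschitz hypothesis on $f$ in $(\zeta,\eta)$, and the integrability of the kernels $K_{s,p},K_{t,q}$ must all be used simultaneously.
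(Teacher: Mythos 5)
Your strategy is the same as the paper's in broad outline: argue by contradiction, perturb the test function upward to obtain a strict subsolution on a small ball, invoke the comparison principle, and derive a contradiction at $x_0$. However, the perturbation mechanism you use differs from the paper's and, as written, contains a genuine gap.

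The paper perturbs additively by a smooth compactly supported bump: it takes $\zeta\in C_c^2\big(B_{\mu'/2}(x_0)\big)$ with $0\le\zeta\le1$, $\zeta(x_0)=1$, and sets $\psi_\beta=\psi+\beta\zeta$. Then $\psi_\beta-\psi=\beta\zeta$ is uniformly $O(\beta)$ and supported in a small ball, so the deviations $|L_a\psi_\beta-L_a\psi|$, $|D_s^p\psi_\beta-D_s^p\psi|$ and $|D_{a,t}^q\psi_\beta-D_{a,t}^q\psi|$ are uniformly small (via Lemma 4.6 in \cite{FZ2023}, Lemma 2.6 in \cite{BM2021}, and the paper's Lemma \ref{D-L1}). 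Your construction instead sets $\psi=\varphi_{r_0}+\eta$ in $B_\rho(x_0)$ and $\psi=u$ outside $B_\rho(x_0)$. The central difficulty is your claim that $\psi(x)-\psi(y)=\varphi_{r_0}(x)-\varphi_{r_0}(y)$ whenever $x,y$ lie on the same side of $\partial B_\rho(x_0)$. This is false: for $y$ in the annulus $B_{r_0}(x_0)\setminus B_\rho(x_0)$ you have $\psi(y)=u(y)$ whereas $\varphi_{r_0}(y)=\varphi(y)$, and $u-\varphi$ is not $O(\eta)$ there. Consequently $D_s^p\psi(x)-D_s^p\varphi_{r_0}(x)$ and $D_{a,t}^q\psi(x)-D_{a,t}^q\varphi_{r_0}(x)$, which involve $|\psi(x)-\psi(y)|^p$ with no sign, pick up uncontrolled contributions from $y$ in that annulus; the uniform $O(\eta)$ bound in your third paragraph therefore does not hold, and the Lipschitz-in-$(\zeta,\eta)$ property of $f$ cannot be used as you intend. (The fix is to define $\psi=\varphi_{r_0}+\eta\,\mathbf{1}_{B_\rho(x_0)}$, i.e.\ keep $\varphi_{r_0}$ rather than $u$ on the annulus; then the difference is genuinely a localized $\eta$-bump, and $u\ge\psi$ still holds off $B_{\rho/2}(x_0)$.)

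Two further points you gloss over. First, the preliminary replacement $\varphi\rightsquigarrow\varphi-|x-x_0|^{2k}$ must not leave the admissible test class of Definition~\ref{D-VS}: if $p\le\frac{2}{2-s}$ and $\nabla\varphi(x_0)=0$, one must verify that $x_0$ remains an isolated critical point and that the $C_\beta^2$ bound is preserved, which requires $2k>\beta$ — this should be stated. Second, the continuity of $x\mapsto L_a\varphi_{r_0}(x)$ near $x_0$ and the pointwise-to-weak passage are nontrivial in the nonlocal setting and are precisely what Lemma 4.5 in \cite{FZ2023} provides; invoking them informally hides real work, and in particular the $D_{a,t}^q$ stability estimate under a perturbation of the test function is a new ingredient here (the paper's Lemma~\ref{D-L1}) that your ``mean value expansion'' does not replace.
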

The reverse implication of Theorem \ref{D-T1} is discussed in the following theorem.

\begin{theorem}\label{D-T2}
    Let $u$ be a bounded viscosity supersolution to problem \eqref{D} and $f:\Omega \times \mathbb{R}^3 \rightarrow\mathbb{R}$ satisfy the following conditions:
    \begin{enumerate}[(a)]
        \item $f=f(x,t,\zeta,\eta)$ is uniformly continuous in $\Omega \times \mathbb{R}^3$ and Lipschitz continuous in $\zeta$ and $\eta$.
        \item $f=f(x,t,\zeta,\eta)$ is non-increasing in $t$.
        \item $f$ satisfies the equation
          \begin{equation}\label{D-T2-f}
              |f(x,t,\zeta,\eta)|\leq \gamma_1(|t|)|\zeta|^{\frac{p-1}{p}}+\gamma_2(|t|)|\eta|^{\frac{q-1}{q}}+h(x),
          \end{equation}
          where $\gamma_i:\mathbb{R}\rightarrow \mathbb{R}$ are continuous functions with $\gamma_i\geq 0$ for $i\in\{1,2\}$ and $h\in L^\infty_{\text{loc}}(\Omega)$.
    \end{enumerate}
    Then, $u$ is a weak supersolution to problem \eqref{D}.
\end{theorem}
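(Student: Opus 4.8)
The plan is to prove Theorem \ref{D-T2} by the classical regularization-by-infimal-convolution scheme, adapted to the nonlocal double phase operator; note that, unlike Theorem \ref{D-T1}, no comparison principle is available here, so regularization must do all the work. Since a viscosity supersolution is lower semicontinuous and $u$ is bounded, for $\varepsilon>0$ I would set
\[
 u_\varepsilon(x)=\inf_{y\in\mathbb R^N}\Bigl\{u(y)+\tfrac{1}{2\varepsilon}|x-y|^{2}\Bigr\},
\]
and record the standard facts: $u_\varepsilon\le u$, $u_\varepsilon\nearrow u$ pointwise as $\varepsilon\to0$, $u_\varepsilon$ is semiconcave (hence locally Lipschitz and, by Alexandrov's theorem, twice differentiable at a.e.\ point), $u_\varepsilon$ obeys the same bounds as $u$ so that all nonlocal tails of $u_\varepsilon$ converge, and, for $x$ in the shrunken domain $\Omega_r=\{x\in\Omega:\operatorname{dist}(x,\partial\Omega)>r\}$, the infimum is attained at some $x^\ast$ with $|x-x^\ast|\le d_\varepsilon$ where $d_\varepsilon\to0$.

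The first substantial step is to show that $u_\varepsilon$ is a viscosity supersolution, in $\Omega_r$, of the perturbed equation $L_a w=f(x,w,D_s^p w,D_{a,t}^q w)-\sigma_\varepsilon$ for a constant $\sigma_\varepsilon\to0$. If a test function $\phi$ touches $u_\varepsilon$ from below at $x_0\in\Omega_r$, then the shifted function $\phi(\cdot-(x_0-x_0^\ast))$, glued to $u$ outside the relevant ball, touches $u$ from below at the minimizer $x_0^\ast$; applying the viscosity supersolution property of $u$ at $x_0^\ast$ and using the structural/regularity assumptions on the kernels $K_{s,p},K_{t,q}$ to transport the value of $L_a$ from $x_0^\ast$ back to $x_0$ up to an $o(1)$ error, one gets the desired inequality at $x_0$. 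On the right-hand side one compares $f(x_0^\ast,u(x_0^\ast),\cdot,\cdot)$ with $f(x_0,u_\varepsilon(x_0),\cdot,\cdot)$: the inequality $u(x_0^\ast)\le u_\varepsilon(x_0)$ together with hypothesis (b) gives $f(x_0,u_\varepsilon(x_0),\zeta,\eta)\le f(x_0,u(x_0^\ast),\zeta,\eta)$, uniform continuity in $x$ from (a) absorbs $|x_0-x_0^\ast|\le d_\varepsilon$, and Lipschitz dependence on $\zeta,\eta$ absorbs the discrepancy between the nonlocal gradient quantities at the two points. I expect this to be the main obstacle, precisely because the operator is nonlocal (the regularization alters $u$ on all of $\mathbb R^N$, so the translation argument must be handled globally) and the source genuinely depends on $u$ and on $D_s^p u,\ D_{a,t}^q u$; hypotheses (a) and (b) are exactly what makes the bookkeeping close.

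Next I would upgrade $u_\varepsilon$ from a viscosity to a pointwise, and then to a weak, supersolution of the perturbed equation. At a.e.\ $x_0\in\Omega_r$ the function $u_\varepsilon$ has a second-order Taylor expansion; subtracting $\delta|x-x_0|^{2}$ and letting $\delta\to0$ produces admissible test functions touching from below, and since $u_\varepsilon$ is bounded and locally Lipschitz the principal value $L_a u_\varepsilon(x_0)$ exists and satisfies $L_a u_\varepsilon(x_0)\ge f(x_0,u_\varepsilon(x_0),D_s^p u_\varepsilon(x_0),D_{a,t}^q u_\varepsilon(x_0))-\sigma_\varepsilon$ for a.e.\ $x_0$. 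Testing this pointwise inequality against an arbitrary $0\le\varphi\in C_c^\infty(\Omega_r)$, using Fubini and the symmetry of the kernels to symmetrize the double integrals, yields the weak supersolution inequality for $u_\varepsilon$ with right-hand side $f(\cdot,u_\varepsilon,D_s^p u_\varepsilon,D_{a,t}^q u_\varepsilon)-\sigma_\varepsilon$; the growth bound (c) guarantees this source is in $L^1_{\mathrm{loc}}$.

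Finally I would pass to the limit $\varepsilon\to0$. A Caccioppoli-type estimate derived from the weak formulation of $u_\varepsilon$ — testing with a truncation of $u_\varepsilon$ times a cutoff, and using (c), the boundedness of $u$, and the sub-critical exponents $(p-1)/p$, $(q-1)/q$ in (c) to reabsorb the source via Young's inequality — gives uniform bounds for $u_\varepsilon$ in $W^{s,p}_{\mathrm{loc}}\cap W^{t,q}_{\mathrm{loc}}$. Together with the monotone pointwise convergence $u_\varepsilon\nearrow u$, this shows $u\in W^{s,p}_{\mathrm{loc}}\cap W^{t,q}_{\mathrm{loc}}$ and that, along a subsequence, $u_\varepsilon\to u$ weakly there, strongly in $L^p_{\mathrm{loc}}$, with a.e.\ convergence of the relevant Gagliardo difference quotients. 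The growth condition (c) then supplies an $L^1$ dominating function, so dominated convergence together with the continuity and Lipschitz hypotheses in (a) pass the right-hand side to the limit, while the operator term passes by weak lower semicontinuity (equivalently, the usual monotonicity/convexity argument for the double phase energy). One concludes that $u$ is a weak supersolution to \eqref{D} in $\Omega_r$, and since $r>0$ is arbitrary, in $\Omega$.
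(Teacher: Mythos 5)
Your proposal follows the same high-level architecture as the paper's proof (infimal convolution, transfer of the viscosity property to $u_\epsilon$ on a shrunken domain, upgrade to a pointwise and then weak inequality, Caccioppoli estimate, pass to the limit), but it has a few genuine gaps in exactly the places that make the nonlocal, possibly singular ($p<2$, $q<2$) setting delicate.

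First, the exponent in the infimal convolution cannot be taken equal to $2$. You set $u_\varepsilon(x)=\inf_y\{u(y)+\frac{1}{2\varepsilon}|x-y|^2\}$, but Definition \ref{D-IC} uses $l=\max\{2,\frac{sp}{p-1}\}$, and this is not cosmetic. In the case $p\le\frac{2}{2-s}$ (equivalently $\frac{sp}{p-1}\ge 2$), the viscosity definition requires test functions vanishing to order $\beta>\frac{sp}{p-1}$ at critical points (the class $C_\beta^2$). With $l=2$, the natural test function touching $u_\varepsilon$ from below at a point where $\nabla u_\varepsilon(x_0)=0$ is a paraboloid, whose Hessian is bounded below so that $|D^2\phi|/d_\phi^{\beta-2}\to\infty$ when $\beta>2$; it is therefore not admissible, and Lemma \ref{D-L5} would not go through. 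The larger exponent $l\ge\frac{sp}{p-1}\ge\frac{tq}{q-1}$ is what makes the touching function lie in $C_l^2$ and also what makes the principal value of the $q$-part finite (used crucially in Lemma \ref{D-L7} for $q\le\frac{2}{2-t}$).

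Second, the step ``testing the pointwise inequality against $\varphi$ and using Fubini and symmetry to symmetrize'' is not justified. The double integrand $a(x,y)h_q(u_\varepsilon(x)-u_\varepsilon(y))(\varphi(x)-\varphi(y))K_{t,q}(x,y)$ is \emph{not} absolutely integrable near the diagonal when $q<2$, so Fubini cannot be applied directly, and for $q\le\frac{2}{2-t}$ the pointwise principal value $L_a u_\varepsilon(x)$ is not automatically defined even though $u_\varepsilon$ is bounded and locally Lipschitz. The paper spends Lemmas \ref{D-L6} and \ref{D-L7} precisely on establishing the one-sided inequality $\int_K(L_au_\varepsilon)\varphi\,dx\le H_a(u_\varepsilon,\varphi)$ by mollifying $u_\varepsilon$, using Taylor and the one-sided second-order structure of the infimal convolution (with the correct $l$), and splitting the range of $q$. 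Your sketch collapses all of this into one sentence.

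Third, passing the right-hand side to the limit via ``dominated convergence plus (a)'' is under-justified. The candidate dominating function $\gamma_1(|u_\varepsilon|)|D_s^p u_\varepsilon|^{(p-1)/p}+\gamma_2(|u_\varepsilon|)|D_{a,t}^q u_\varepsilon|^{(q-1)/q}+h$ depends on $\varepsilon$, and pointwise a.e.\ convergence $D_s^p u_\varepsilon(x)\to D_s^p u(x)$ is not automatic: the Lipschitz constant of $u_\varepsilon$ blows up as $\varepsilon\to0$, so there is no $\varepsilon$-uniform dominating function in the $y$-integral. What the paper actually proves (from the sign of $I_{2,p}$, $I_{2,q}$ and the algebraic inequalities \eqref{D-T2-14}--\eqref{D-T2-15}) is \emph{strong} convergence of the difference quotients, i.e.\ $\int_K\int_{\mathbb R^N}|u_\varepsilon(x)-u_\varepsilon(y)-(u(x)-u(y))|^p K_{s,p}\,dy\,dx\to0$ and the analogous statement with weight $a$. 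This is the hypothesis of Lemma \ref{D-L2}, and Lemma \ref{D-L2} then exploits the Lipschitz dependence of $f$ on $\zeta,\eta$ to pass the source term to the limit. This monotonicity-driven strong convergence step is the core of the limiting argument and needs to be spelled out, not folded into dominated convergence.
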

{Motivated by the results in \cite{C2017}, we employ the idea of double phase De Giorgi classes to obtain the boundedness of certain weak solutions to problem} \eqref{D}, when the function $f=f(x,t,\zeta,\eta)$ is independent of $\zeta$ and $\eta$ and 
 \begin{equation}\label{D-f-H}
        |f(x,t)|\leq c_1+c_2|t|^{l-1}, \text{ for every }x\in\Omega,
\end{equation}
where $1<l<q_t^*=\frac{Nq}{N-tq}$. {Establishing the local H\"older continuity property of solutions to \eqref{D}, where $f$ satisfies \eqref{D-f-H} remains challenging due to the crucial role of both the fractional $p$-seminorm and the weighted fractional $q$-seminorm terms in the definition of double phase De Giorgi classes and proof of local H\"older continuity of functions in these classes. Our result on the boundedness of weak solutions to \eqref{D}} is as follows.
\begin{theorem}\label{D-T3}
    Let $tq\leq N$ and {$u\in W^{t,q}_{\text{loc}}(\mathbb{R}^N)\cap  W^{s,p}(\mathbb{R}^N)\cap  L_{s,p}^{p-1}(\mathbb{R}^N)\cap L_{a,t,q}^{q-1}(\mathbb{R}^N)$} be a weak solution to the problem 
    \begin{align}\label{D-1}
        L_a u(x)=f(x,u) \text{ in }\Omega,
    \end{align}
    where there exist constants $c_1,c_2\geq0$ such that the function $f:\mathbb{R}^N \times \mathbb{R} \rightarrow \mathbb{R}$ satisfies \eqref{D-f-H} for some $l\in (1,q_t^*)$ with $q_t^*=\frac{Nq}{N-tq}$. Then, $u$ is locally bounded in $\Omega$, i.e., there exist constants $\tilde{R}\geq 0, \ C=C(N,s,p,t,q,c_2)\geq 1$ such that for any $0<2R<\max\left\{\operatorname{dist}(x_0,\partial\Omega),\tilde{R}\right\}$,
     \begin{align*}
        \|u\|_{L^\infty(B_R(x_0))}&\leq \operatorname{Tail}_{s,p}((u-k')_+,x_0,R)+ \operatorname{Tail}_{a,t,q}((u-k')_+,x_0,R)+R^{\frac{\rho+N\tilde{\epsilon}}{q}}\theta+2k'\\
        &\ \ \ +C\left[\frac{1}{R^N}\left(\|(u-k')_+\|_{L^p(B_{2R}(x_0))}^p+\left(\frac{R}{2}\right)^{sp-tq}\|(u-k')_+\|_{L^q(B_{2R}(x_0))}^q\right)\right]^{\frac{1}{p}}.
    \end{align*}
    {Here} the choice of the constants $\theta, k',\tilde{\epsilon},\rho, \tilde{R}$ are as follows.
    \begin{itemize}
        \item If $c_2=0$, then $\theta=c_1^\frac{1}{q-1},\ k'=0,\ \tilde{\epsilon}=\frac{tq}{N}, \ \rho=\frac{tq}{q-1}$ and $\tilde{R}=\infty$.
        \item If $c_2>0$ and $1<l\leq q$, then $\theta=c_1^\frac{1}{q-1},\ k'=1,\ \tilde{\epsilon}=\frac{tq}{N}, \ \rho=\frac{tq}{q-1}$ and $\tilde{R}=1$.
        \item If $c_2>0$ and $q<l\leq q_t^*$, then $\theta=c_1^\frac{1}{q-1},\ k'=0, \ \tilde{\epsilon}=1-\frac{N-\alpha q}{Nq}, \ \rho=\frac{tq}{q-1}$ and $\tilde{R}=\tilde{R}(N,q,t,\Lambda_2,c_2,\|u\|_{L^{q_\alpha^*}(\Omega)})$, where $0<\alpha \leq t$ is fixed.
    \end{itemize}
\end{theorem}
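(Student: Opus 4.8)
The plan is to run a De~Giorgi iteration adapted to the double phase fractional setting, in the spirit of Cozzi's analysis of the nonhomogeneous fractional $p$-Laplacian \cite{C2017} and of Byun et al.\ \cite{BOS2022} for the homogeneous nonlocal double phase equation, with the zero-order term $f(x,u)$ absorbed in the manner of \cite{C2017}. The argument has two parts: a Caccioppoli-type estimate showing that a weak solution of \eqref{D-1} lies in an appropriate \emph{double phase fractional De~Giorgi class}, and a proof that every member of such a class is locally bounded with the claimed quantitative bound. The membership hypotheses $u\in W^{t,q}_{\mathrm{loc}}(\mathbb{R}^N)\cap W^{s,p}(\mathbb{R}^N)\cap L^{p-1}_{s,p}(\mathbb{R}^N)\cap L^{q-1}_{a,t,q}(\mathbb{R}^N)$ guarantee that the tail quantities below are finite and that the test-function manipulations are legitimate.

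\emph{Step 1 (energy inequality).} Fix $x_0$, concentric balls $B_r\subset B_R\subset B_{2R}$ with $\overline{B_{2R}}\subset\Omega$, a level $k\ge k'$, and a cutoff $\varphi\in C_c^\infty(B_R)$ with $\varphi\equiv1$ on $B_r$ and $|\nabla\varphi|\lesssim(R-r)^{-1}$. Testing the weak formulation of \eqref{D-1} with $(u-k)_+\varphi^q$ and using the standard algebraic/monotonicity inequalities for the $p$- and $q$-kernels, one bounds
\[
  [(u-k)_+\varphi]_{W^{s,p}(B_R)}^p+\iint_{B_R\times B_R}a(x,y)\,\frac{\big|((u-k)_+\varphi)(x)-((u-k)_+\varphi)(y)\big|^q}{|x-y|^{N+tq}}\,dx\,dy
\]
by a sum of: (i) local quantities, negative powers of $R-r$ multiplying $\|(u-k)_+\|_{L^p(B_R)}^p$ and $\|(u-k)_+\|_{L^q(B_R)}^q$; (ii) nonlocal contributions, controlled via $\operatorname{Tail}_{s,p}((u-k)_+,x_0,R)^{p-1}$ and $\operatorname{Tail}_{a,t,q}((u-k)_+,x_0,R)^{q-1}$ times $\sup_{B_r}(u-k)_+$ and $|A(k,R)|$, where $A(k,R):=\{x\in B_R:u(x)>k\}$; and (iii) the forcing term $\int_{B_R}\big(c_1+c_2|u|^{l-1}\big)(u-k)_+\,dx$. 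The scaling mismatch between the two phases is reconciled by weighting the $q$-phase quantities with $(R/2)^{sp-tq}$, which is legitimate precisely because $tq\le sp$; this is the source of the factor $(R/2)^{sp-tq}$ in the conclusion.

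\emph{Step 2 (Sobolev embedding, recursion, iteration).} Applying to $v=(u-k)_+\varphi$ the fractional Sobolev inequality $\|v\|_{L^{q_t^*}(B_R)}\lesssim[v]_{W^{t,q}(B_R)}+R^{-t}\|v\|_{L^q(B_R)}$ (valid as $tq<N$; the borderline $tq=N$ is handled by lowering the target exponent) together with its $(s,p)$-counterpart, and combining with Step~1 and H\"older's inequality, one obtains, writing $\mathcal E(k,r):=\|(u-k)_+\|_{L^p(B_r)}^p+(R/2)^{sp-tq}\|(u-k)_+\|_{L^q(B_r)}^q$, a De~Giorgi-type recursion of the form
\[
  \mathcal E(k,r)\ \le\ \frac{C}{(k-h)^{\sigma}(\ell-r)^{\nu}}\,\mathcal E(h,\ell)^{1+\delta}\ +\ (\text{tail and forcing terms}),\qquad h<k,\ r<\ell,
\]
with a gain $\delta>0$ produced by the gap between the Sobolev exponent and the natural integrability exponent. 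Choosing geometric sequences $R_j=R(1+2^{-j})$ and levels $k_j$ increasing from $k'+\bar k$ to $k'+2\bar k$ ($\bar k$ a free parameter) and setting $Y_j:=\mathcal E(k_j,R_j)$ converts this into $Y_{j+1}\le C\,b^{\,j}\,Y_j^{1+\delta}$; the standard fast-geometric-convergence lemma gives $Y_j\to0$ once $Y_0$ is below an explicit threshold depending only on $C,b,\delta$. Taking $\bar k$ comparable to the tail terms plus $R^{(\rho+N\tilde\epsilon)/q}\theta$ plus the bracketed $L^p$–$L^q$ average appearing in the statement makes $Y_0$ small enough, whence $Y_j\to0$, i.e.\ $u\le2\bar k+k'$ a.e.\ on $B_R$. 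Running the same argument for $-u$, which solves \eqref{D-1} with $f$ replaced by $-f(x,-\,\cdot\,)$ and still satisfies \eqref{D-f-H}, furnishes the matching lower bound, and the two combine to the asserted estimate, with the parameters $\theta,k',\tilde\epsilon,\rho,\tilde R$ taking the stated values in each regime.

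\emph{Main obstacle.} The delicate point is the interplay between the growth exponent $l$ and the $q$-phase energy in term (iii). For $c_2=0$ or $1<l\le q$ the forcing term is absorbed, after Young's inequality, by a small multiple of the left-hand side plus admissible lower-order terms, giving $\theta=c_1^{1/(q-1)}$, $\tilde\epsilon=tq/N$, $\rho=tq/(q-1)$ (and $k'=0$, $\tilde R=\infty$, respectively $k'=1$, $\tilde R=1$, to neutralise $c_1$). For $q<l\le q_t^*$, however, $|u|^{l-1}(u-k)_+$ is critical with respect to the $W^{t,q}$-energy and cannot be absorbed on a fixed ball; instead one exploits the global a priori bound $\|u\|_{L^{q_\alpha^*}(\Omega)}$ and absolute continuity of the integral to make this term small on sufficiently small balls, which forces $\tilde R$ to depend on $\|u\|_{L^{q_\alpha^*}(\Omega)}$ and changes the exponents to $\tilde\epsilon=1-\frac{N-\alpha q}{Nq}$, $\rho=\frac{tq}{q-1}$. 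A secondary, pervasive nuisance is keeping the two mismatched fractional scalings compatible in every step, uniformly resolved by the weight $(R/2)^{sp-tq}$ and the hypothesis $tq\le sp$.
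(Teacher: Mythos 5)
Your proposal coincides with the paper's strategy: a Caccioppoli-type energy estimate places a weak solution of \eqref{D-1} in a double-phase fractional De Giorgi class (with the three regimes in $c_2,l$ handled by Young's inequality, the $L^{q_\alpha^*}$ bound, and smallness of $\tilde R$, respectively), and a fast geometric De Giorgi iteration with the $(R/2)^{sp-tq}$ weighting of the two phases then yields local boundedness for that class, applied to $\pm u$. The one technical ingredient you gloss over is the hole-filling absorption lemma (Lemma~\ref{D-L10}) that the paper invokes in the critical case $q<l<q_t^*$ to reabsorb the leftover $(1-\phi)^l(u-k)_+^l$ term, but otherwise your route is the same as theirs.
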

An overview of the paper is as follows. The preliminary definitions, notations and results required for our work will be included in Section \ref{D-S2}. The proof of the comparison principle for particular instances of problem \eqref{D} will be given in Section \ref{D-S3}. Section \ref{D-S4} starts with a supporting lemma and establishes Theorem \ref{D-T1} using a contradiction argument. In Section \ref{D-S5}, we prove Theorem \ref{D-T2}, which uses the idea of infimal convolution of functions and follows a limiting argument. Finally, we use the idea of De Giorgi classes in Section \ref{D-S6} to obtain the boundedness result given in Theorem \ref{D-T3}.

\section{Preliminaries and space setup}\label{D-S2}
This section focuses on the basic definitions and notations required to study problem \eqref{D}. We first recall the definition of fractional Sobolev spaces. 
For $0<s<1<p<\infty$, the fractional Sobolev space on an open set $\Omega\subset \mathbb{R}^N$ is defined by
$$ W^{s,p}(\Omega)=\left\{u \in L^p(\Omega): \int_\Omega \int_\Omega \frac{| u(x)-u(y)|^p}{| x-y |^{N+ps}}dxdy <\infty\right\}.$$
It is endowed with the norm $ \|u\|_{W^{s,p}(\Omega)}=\left( \|u\|_{L^p(\Omega)}^p +[u]_{W^{s,p}(\Omega)}^p \right)^{\frac{1}{p}}$,
where the Gagliardo semi-norm $[u]_{W^{s,p}(\Omega)}$ is defined by
$$[u]_{W^{s,p}(\Omega)}^p=\int_\Omega \int_\Omega \frac{| u(x)-u(y)|^p}{| x-y |^{N+ps}}dxdy.$$ 
We have the following fractional Sobolev inequality from Theorem 6.5 in \cite{NPV2012}.
\begin{theorem}
    Let $0<s<1\leq p<\infty$ and $sp\leq N$. Then, there exists a constant $S=S(N,s,p)>0$ such that
    $$S\|u\|_{L^{p_s^*}(\mathbb{R}^N)}^p \leq \int_{\mathbb{R}^N} \int_{\mathbb{R}^N} \frac{| u(x)-u(y)|^p}{| x-y |^{N+ps}}dxdy$$
    for every $u\in W^{s,p}(\mathbb{R}^N)$, where $p_s^*=\frac{Np}{N-sp}$.
\end{theorem}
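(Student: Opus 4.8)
The plan is to prove this by the classical pointwise‑estimate method. First I would reduce to the case in which $\|u\|_{L^{p_s^*}(\mathbb{R}^N)}$ is already known to be finite: replacing $u$ by its truncation $u_k:=\max\{-k,\min\{k,u\}\}$, one has $[u_k]_{W^{s,p}(\mathbb{R}^N)}\le[u]_{W^{s,p}(\mathbb{R}^N)}$ (the map $t\mapsto\max\{-k,\min\{k,t\}\}$ is $1$‑Lipschitz) and $u_k\in L^{p_s^*}(\mathbb{R}^N)$ (since $|u_k|^{p_s^*}\le k^{p_s^*-p}|u|^p$ and $u\in L^p(\mathbb{R}^N)$), so once the inequality is known for every $u_k$, Fatou's lemma (using $u_k\to u$ pointwise) gives it for $u$. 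Write $g(x):=\int_{\mathbb{R}^N}\frac{|u(x)-u(y)|^p}{|x-y|^{N+sp}}\,dy$; then $g\in L^1(\mathbb{R}^N)$ with $\|g\|_{L^1(\mathbb{R}^N)}=[u]_{W^{s,p}(\mathbb{R}^N)}^p$, so $g(x)<\infty$ at a.e.\ $x$.

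The core step is a pointwise bound, valid at every $x$ with $g(x)<\infty$ and every $r>0$. From the trivial identity $u(x)=\frac{1}{|B_r(x)|}\int_{B_r(x)}[u(x)-u(y)]\,dy+\frac{1}{|B_r(x)|}\int_{B_r(x)}u(y)\,dy$ and the triangle inequality, I would bound the first average, by Hölder's inequality — after inserting the weight $|x-y|^{(N+sp)/p}$ and computing $\int_{B_r(x)}|x-y|^{(N+sp)/(p-1)}\,dy\simeq r^{N+(N+sp)/(p-1)}$ (with the obvious modification $|x-y|\le r$ when $p=1$) — by a term $\le C\,r^{s}g(x)^{1/p}$, and the second (``tail'') average, by Hölder's inequality with exponent $p_s^*$, by $|B_r(x)|^{-1/p_s^*}\|u\|_{L^{p_s^*}(\mathbb{R}^N)}=C\,r^{s-N/p}\|u\|_{L^{p_s^*}(\mathbb{R}^N)}$ (using $N/p_s^*=N/p-s$). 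This produces
\[
|u(x)|\ \le\ C_1\,r^{s}\,g(x)^{1/p}+C_2\,r^{s-N/p}\,\|u\|_{L^{p_s^*}(\mathbb{R}^N)}\qquad\text{for all } r>0 .
\]
Since $s>0>s-N/p$ (this is where $sp<N$ enters), I would balance the two terms by taking $r^{N/p}=\|u\|_{L^{p_s^*}(\mathbb{R}^N)}/g(x)^{1/p}$, which yields the scale‑invariant estimate $|u(x)|\le C\,\|u\|_{L^{p_s^*}(\mathbb{R}^N)}^{sp/N}g(x)^{(N-sp)/(Np)}$. Raising this to the power $p_s^*=\tfrac{Np}{N-sp}$ — so that $g(x)$ appears to the first power — and integrating over $\mathbb{R}^N$ gives $\|u\|_{L^{p_s^*}(\mathbb{R}^N)}^{p_s^*}\le C\,\|u\|_{L^{p_s^*}(\mathbb{R}^N)}^{sp\,p_s^*/N}[u]_{W^{s,p}(\mathbb{R}^N)}^{p}$; dividing by the finite quantity $\|u\|_{L^{p_s^*}(\mathbb{R}^N)}^{sp\,p_s^*/N}$ and using $p_s^*-sp\,p_s^*/N=p_s^*\tfrac{N-sp}{N}=p$ leaves $\|u\|_{L^{p_s^*}(\mathbb{R}^N)}^{p}\le C\,[u]_{W^{s,p}(\mathbb{R}^N)}^{p}$, i.e.\ the assertion with $S=1/C$.

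Each of these estimates is routine; the point needing the most care is the seemingly circular appearance of $\|u\|_{L^{p_s^*}}$ on the right of the pointwise bound. Its use is justified precisely by the initial truncation, which makes $\|u\|_{L^{p_s^*}}$ finite before the argument begins; had one used $\|u\|_{L^p}$ there instead, the method would deliver only a weaker, non‑scale‑invariant Gagliardo–Nirenberg inequality. The precise matching of exponents (in the choice of $r$, and in the final power $p_s^*$) is not accidental but is dictated by the dilation $u\mapsto u(\lambda\,\cdot)$, under which $\|u\|_{L^{p_s^*}(\mathbb{R}^N)}$ and $[u]_{W^{s,p}(\mathbb{R}^N)}$ transform in the same way — a useful check while carrying out the bookkeeping.
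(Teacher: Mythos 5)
Your argument is correct, but note that the paper does not prove this statement at all: it is quoted verbatim from Theorem 6.5 of the cited reference \cite{NPV2012} as a preliminary, so there is no in-paper proof to match. What you give is the classical averaging/balancing proof: truncate to make $\|u\|_{L^{p_s^*}}$ finite a priori, derive the pointwise bound $|u(x)|\le C_1 r^s g(x)^{1/p}+C_2 r^{s-N/p}\|u\|_{L^{p_s^*}}$ from the mean-value decomposition on $B_r(x)$, optimize in $r$, raise to the power $p_s^*$ and integrate; the exponent bookkeeping you carry out ($N/p_s^*=N/p-s$, $p_s^*(1-sp/N)=p$) is right, the Fatou/monotone-convergence passage back from $u_k$ to $u$ is legitimate, and the degenerate cases ($g(x)=0$ or $\|u\|_{L^{p_s^*}}=0$) are trivial. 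This is a genuinely different route from the proof in \cite{NPV2012}, which proceeds via the Savin--Valdinoci level-set decomposition $a_k=|\{|u|>2^k\}|$ and a discrete convolution-type inequality; your method is shorter and more self-contained, while the level-set argument extends more readily to domains and to the borderline embeddings treated there.

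Two small caveats. First, the statement as copied in the paper allows $sp=N$, for which $p_s^*$ is undefined; your proof (correctly) uses $sp<N$ when balancing the two terms, so you are implicitly proving the only meaningful case. Second, when dividing by $\|u_k\|_{L^{p_s^*}}^{sp\,p_s^*/N}$ you should say explicitly that this quantity is finite \emph{and nonzero} (the zero case being trivial); you address finiteness via the truncation but not the nonvanishing, which is a one-line remark worth adding.
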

We also have the next continuous embedding theorem from Theorem 4.47 and Corollary 4.34 in \cite{DD2012}.
\begin{theorem}\label{fSS-CE}
    Let $0<s<s'<1<p<\infty$. Then, the following embeddings are continuous.
    \begin{enumerate}[(1)]
            \item  If $sp<N$, $W^{s,p}(\mathbb{R}^N) \subset L^q(\mathbb{R}^N), \ q \in [1,p_s^*]$. 
            \item If $sp=N,$  $W^{s,p}(\mathbb{R}^N) \subset L^q(\mathbb{R}^N), \ q \in [1,\infty)$. 
            \item If $sp>N,$   $W^{s,p}(\mathbb{R}^N) \subset L^{\infty}(\mathbb{R}^N),\  W^{s,p}(\mathbb{R}^N) \subset C^{0,\alpha}(\mathbb{R}^N), \ \alpha=s-\frac{N}{p}$. 
    \end{enumerate}
    The embedding $W^{s,p}(\mathbb{R}^N) \subset W^{s',p}(\mathbb{R}^N)$ is also continuous.
\end{theorem}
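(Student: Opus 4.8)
The plan is to treat the three regimes $sp<N$, $sp=N$ and $sp>N$ in turn, after first establishing the embedding between the two fractional Sobolev spaces, which is also the mechanism that makes the borderline regime work. Throughout I would use the fractional Sobolev inequality recorded above, $S\|u\|_{L^{p_s^*}(\mathbb{R}^N)}^p\le[u]_{W^{s,p}(\mathbb{R}^N)}^p$ (which holds for $sp<N$), together with the trivial bound $[u]_{W^{s,p}(\mathbb{R}^N)}\le\|u\|_{W^{s,p}(\mathbb{R}^N)}$. For the inclusion of fractional Sobolev spaces I would show that for $0<\sigma'<\sigma<1$ the embedding $W^{\sigma,p}(\mathbb{R}^N)\subset W^{\sigma',p}(\mathbb{R}^N)$ is continuous, by splitting the Gagliardo integral defining $[u]_{W^{\sigma',p}(\mathbb{R}^N)}^p$ over $\{|x-y|<1\}$ and $\{|x-y|\ge1\}$: on the first region $|x-y|^{-N-\sigma'p}\le|x-y|^{-N-\sigma p}$, so that part is at most $[u]_{W^{\sigma,p}(\mathbb{R}^N)}^p$; on the second region $|u(x)-u(y)|^p\le 2^{p-1}(|u(x)|^p+|u(y)|^p)$ together with Fubini and the finiteness of $\int_{|z|\ge1}|z|^{-N-\sigma'p}\,dz$ bounds that part by $c(N,\sigma',p)\|u\|_{L^p(\mathbb{R}^N)}^p$. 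Hence $[u]_{W^{\sigma',p}(\mathbb{R}^N)}^p\le[u]_{W^{\sigma,p}(\mathbb{R}^N)}^p+c\|u\|_{L^p(\mathbb{R}^N)}^p$.

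For case (1) the endpoint $q=p_s^*$ is exactly the fractional Sobolev inequality combined with $[u]_{W^{s,p}}\le\|u\|_{W^{s,p}}$; for $p\le q<p_s^*$ I would interpolate, writing $\frac1q=\frac\theta p+\frac{1-\theta}{p_s^*}$ with $\theta\in(0,1]$ and applying H\"older's inequality to get $\|u\|_{L^q}\le\|u\|_{L^p}^\theta\|u\|_{L^{p_s^*}}^{1-\theta}\le C\|u\|_{W^{s,p}}$ (on $\mathbb{R}^N$ this is the relevant range of exponents). For case (2), $sp=N$, the exponent $p_s^*$ degenerates; here I would use that for $\sigma<s$ one has $\sigma p<N$ and $p_\sigma^*=\frac{Np}{N-\sigma p}\to\infty$ as $\sigma\uparrow s$, so given any finite $q\ge p$ I would fix $\sigma<s$ with $p_\sigma^*\ge q$ and compose $W^{s,p}(\mathbb{R}^N)\subset W^{\sigma,p}(\mathbb{R}^N)\subset L^{p_\sigma^*}(\mathbb{R}^N)\subset L^q(\mathbb{R}^N)$, using the preliminary step, case (1) applied with $\sigma$ in place of $s$, and interpolation with $L^p$.

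Case (3), $sp>N$, is the heart of the argument and I would handle it through a fractional Morrey estimate. Denoting by $u_B$ the average of $u$ over a ball $B$, one has $|u_{B_\rho(x)}-u_{B_{2\rho}(x)}|\le C\rho^{-2N}\iint_{B_{2\rho}(x)\times B_{2\rho}(x)}|u(z)-u(w)|\,dz\,dw$, and then H\"older's inequality in the pair $(z,w)$ — inserting the factor $|z-w|^{(N+sp)/p}\,|z-w|^{-(N+sp)/p}$ and using $|z-w|\le 4\rho$ on $B_{2\rho}(x)\times B_{2\rho}(x)$ together with $|B_{2\rho}|^2\sim\rho^{2N}$ — gives $|u_{B_\rho(x)}-u_{B_{2\rho}(x)}|\le C\rho^{\,s-N/p}[u]_{W^{s,p}(\mathbb{R}^N)}$. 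Since $s-N/p>0$, summing this inequality over dyadic radii shows that $\{u_{B_{2^{-k}\rho}(x)}\}_k$ is Cauchy, that its limit is $u(x)$ at every Lebesgue point of $u$, and that $|u(x)-u_{B_r(x)}|\le Cr^{\,s-N/p}[u]_{W^{s,p}}$ for all $r>0$; comparing the averages of $u$ over $B_{2r}(x)$ and $B_{2r}(y)$, both contained in a ball of radius comparable to $r=|x-y|$, then yields $[u]_{C^{0,\alpha}(\mathbb{R}^N)}\le C[u]_{W^{s,p}(\mathbb{R}^N)}$ with $\alpha=s-N/p$, while $|u(x)|\le|u(x)-u_{B_1(x)}|+|B_1|^{-1}\|u\|_{L^1(B_1(x))}\le C\|u\|_{W^{s,p}(\mathbb{R}^N)}$ gives the $L^\infty$ bound.

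The hard part will be case (3): the interpolation and integral-splitting steps are essentially bookkeeping, whereas the Morrey estimate requires the dyadic averaging and telescoping argument, careful tracking of the scaling exponents in the H\"older step, and the identification of the continuous representative via Lebesgue points. A secondary point to get right is that in case (2) one fixes, for each finite exponent $q$ separately, a single admissible $\sigma<s$; the constants there are allowed to depend on $q$, which is all that is needed for continuity of each embedding.
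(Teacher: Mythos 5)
Your proposal is essentially correct, but note that the paper does not prove this theorem at all: it is quoted as background and attributed to Theorem 4.47 and Corollary 4.34 of the book of Demengel--Demengel \cite{DD2012}. What you have written out is the standard self-contained proof (the one in the Hitchhiker's guide of Di Nezza--Palatucci--Valdinoci): the splitting of the Gagliardo integral over $\{|x-y|<1\}$ and $\{|x-y|\ge 1\}$ for the monotonicity in the smoothness parameter, the sharp Sobolev inequality plus Lebesgue interpolation for $sp<N$, the ``lower the exponent $\sigma<s$ and let $p_\sigma^*\to\infty$'' trick for $sp=N$, and the dyadic-averaging Morrey argument for $sp>N$. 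I checked the exponent bookkeeping in your H\"older step: the second factor contributes $\rho^{(N+sp)/p+2N(p-1)/p}$, which against the prefactor $\rho^{-2N}$ indeed leaves $\rho^{s-N/p}$, so the telescoping and the identification of the H\"older-continuous representative at Lebesgue points go through.

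Two caveats, both concerning the literal statement rather than your argument. First, your interpolation only covers $q\in[p,p_s^*]$ (and $q\in[p,\infty)$ when $sp=N$), whereas the theorem as printed claims $q\in[1,p_s^*]$; on all of $\mathbb{R}^N$ the range $1\le q<p$ is in fact false (a smooth function decaying like $|x|^{-N/p}$ up to logarithms lies in $W^{s,p}(\mathbb{R}^N)$ but not in $L^1$), so your parenthetical restriction is the correct reading, but you should say explicitly that the stated range is only valid on sets of finite measure. Second, the last sentence of the theorem, with $s<s'$, asserts $W^{s,p}(\mathbb{R}^N)\subset W^{s',p}(\mathbb{R}^N)$, i.e.\ lower order into higher order; what you prove (correctly) is the reverse inclusion $W^{\sigma,p}\subset W^{\sigma',p}$ for $\sigma'<\sigma$, which is the true statement and evidently the one intended. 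Neither point is a gap in your reasoning, but a proof that is to be spliced against this statement should flag both discrepancies.
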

Further properties of these spaces can be seen in \cite{DD2012, NPV2012, L2023}.
Throughout the rest of the paper, we assume that $\Omega \subset \mathbb{R}^N$ is a bounded open set, $0<t\leq s<1<p\leq q<\infty$ satisfies $tq\leq sp$. Now, we characterize the {Kernals} $K_{s,p},\ K_{t,q}:\mathbb{R}^N \times\mathbb{R}^N \rightarrow (0, \infty)$. They have the following properties.
\begin{enumerate}
    \item There exist $\Lambda_1,\Lambda_2>0$ such that for all $x,y\in\mathbb{R}^N$ with $x\neq y$,
    \begin{align*}
        \frac{1}{\Lambda_1|x-y|^{N+sp}}\leq K_{s,p}(x,y)\leq \frac{\Lambda_1}{|x-y|^{N+sp}},\\
        \frac{1}{\Lambda_2|x-y|^{N+tq}}\leq K_{t,q}(x,y)\leq \frac{\Lambda_2}{|x-y|^{N+tq}}.
    \end{align*}
    \item $K_{s,p}(x,y)=K_{s,p}(y,x)$ and $K_{t,q}(x,y)=K_{t,q}(y,x)$ for all $x,y\in\mathbb{R}^N$.
    \item The functions {$y \mapsto K_{s,p}(x,y)$ and $y \mapsto K_{t,q}(x,y)$} are continuous in $\mathbb{R}^N\setminus \{x\}$.
    \item $K_{s,p}(x+z,y+z)=K_{s,p}(x,y)$ and $K_{t,q}(x+z,y+z)=K_{t,q}(x,y)$ for all $x,y,z \in \mathbb{R}^N$.
\end{enumerate}
The weight function $a:\mathbb{R}^N \times\mathbb{R}^N \rightarrow (0, \infty)$ has the following properties.
\begin{enumerate}
    \item $a(.,.)$ is symmetric. i.e., $a(x,y)=a(y,x)$ for all $x,y\in\mathbb{R}^N$.
    \item $a(.,.)$ is continuous.
    \item $a(.,.)$ is translation invariant. i.e., $a(x+z,y+z)=a(x,y)$ for all $x,y,z \in \mathbb{R}^N$.
    \item There exists $M>0$ such that $0<a(x,y) \leq M$ for all $x,y\in\mathbb{R}^N$.
\end{enumerate}
The fractional $p$-gradient and the weighted fractional $q$-gradient are given by
\begin{align}
    D_s^p u (x)&=\int_{\mathbb{R}^N}\frac{|u(x)-u(y)|^p}{|x-y|^{N+sp}}dy,\label{p-frac grad}\\
    D_{a,t}^q u(x)&=\int_{\mathbb{R}^N}a
    (x,y)\frac{|u(x)-u(y)|^q}{|x-y|^{N+tq}}dy\label{q-frac grad}.
\end{align}
We need the concept of tail spaces and weighted tail spaces to define the weak solutions to problem \eqref{D}. The Tail space $L_{s,p}^{p-1}(\mathbb{R}^N)$ is given by
$$L_{s,p}^{p-1}(\mathbb{R}^N)=\bigg\{v\in L^{p-1}_{\text{loc}}(\mathbb{R}^N) : \int_{\mathbb{R}^N}\frac{|v(x)|^{p-1}}{(1+|x|)^{N+sp}}dx<\infty \bigg\}.$$
For a function $v\in L_{s,p}^{p-1}(\mathbb{R}^N),\ x\in \mathbb{R}^N$ and $r>0$, Tail of $v$ concerning $B_r(x)$ is
$$\operatorname{Tail}_{s,p}(v;x,r)=\left(r^{sp}\int_{\mathbb{R}^N \setminus B_r(x)}\frac{|v(x)|^{p-1}}{|x-y|^{N+sp}}dy\right)^{\frac{1}{p-1}}.$$
The weighted Tail space $L_{a,t,q}^{q-1}(\mathbb{R}^N)$ is defined by {
$$L_{a,t,q}^{q-1}(\mathbb{R}^N)=\bigg\{v\in L^{q-1}_{\text{loc}}(\mathbb{R}^N) : \sup\limits_{x\in\mathbb{R}^N}\int_{\mathbb{R}^N}a(x,y)\frac{|v(y)|^{q-1}}{(1+|y|)^{N+tq}}dy<\infty \bigg\}.$$}
The weighted Tail of a function $v\in L_{a,t,q}^{q-1}(\mathbb{R}^N)$ concerning $B_r(x)\subset \mathbb{R}^N$ is defined by
$$\operatorname{Tail}_{a,t,q}(v;x,r)=\left(r^{tq}\sup\limits_{z\in\mathbb{R}^N}\int_{\mathbb{R}^N \setminus B_r(x)}a(z,y)\frac{|v(y)|^{q-1}}{|x-y|^{N+tq}}dy\right)^{\frac{1}{q-1}}.$$
We add the following remark prior to the definition of weak and viscosity solutions to problem \eqref{D}.
\begin{remark}\label{D-R1}
The notations below shall be followed in the rest of this work.
\begin{enumerate}
    \item Given $l>1$, $h_l(t):=|t|^{l-2}t$ for $t\in\mathbb{R}$.
    \item For $\Omega'\subset\Omega$, 
    $$Q(\Omega')=(\mathbb{R}^N\times \mathbb{R}^N)\setminus \left((\mathbb{R}^N \setminus\Omega')\times (\mathbb{R}^N \setminus\Omega')\right)=(\Omega' \times \mathbb{R}^N) \cup ((\mathbb{R}^N\setminus \Omega')\times \Omega').$$
    \item For $u,v:\mathbb{R}^N \rightarrow\mathbb{R}$, 
     \begin{align*}
         H_a(u,v)&=\int_{\mathbb{R}^N}\int_{\mathbb{R}^N}h_p(u(x)-u(y))(v(x)-v(y))K_{s,p}(x,y)dydx\\
         & \ \ \ +\int_{\mathbb{R}^N}\int_{\mathbb{R}^N}a(x,y)h_q(u(x)-u(y))(v(x)-v(y))K_{t,q}(x,y)dydx.
     \end{align*}
     \item For an arbitrary $r>0$, $$\Omega_{r}=\{x\in\Omega:\operatorname{dist}(x,\partial\Omega)>r\}.$$
     \item Given $u\in W^{t,q}(\mathcal{D})$,
     $$[u]_{W_{t,q,a}(\mathcal{D})}^q=\int_{\mathcal{D}}  \int_{\mathcal{D}} a(x,y)\frac{| u(x)-u(y)|^q}{| x-y |^{N+tq}}dxdy.$$
\end{enumerate}
\end{remark}
Following \cite{KKL2019}, it can be seen that $L_a u$ may not be defined for all functions $u\in L_{s,p}^{p-1}(\mathbb{R}^N)\cap L_{a,t,q}^{q-1}(\mathbb{R}^N)$. Thus, in order to define the notion of viscosity solutions to problem \eqref{D}, we need the class of functions
\begin{equation*}
    C_\beta^2(A)=\left\{ v \in C^2(A): \sup\limits_{x\in A} \left( \frac{\min\{d_v(x),1\}^{\beta-1}}{|\nabla v(x)} +\frac{|D^2v(x)|}{d_v(x)^{\beta-2}} \right)<\infty\right\},
\end{equation*}
where {$N_v=\{x \in \Omega: |\nabla v(x)|=0\}$ and $d_v(x):=\operatorname{dist}(x,N_v)$}. Now, we move to the definition of viscosity solutions motivated by Definition 2.2 in \cite{BM2021}.
\begin{definition}\label{D-VS}
    Let $u:\mathbb{R}^N\rightarrow\mathbb{R}$. Then, $u$ is said to be a viscosity supersolution (subsolution) to problem \eqref{D} if
    \begin{enumerate}[1.]
        \item $u$ is lower (upper) semi-continuous.
        \item $u_- \in L_{s,p}^{p-1}(\mathbb{R}^N)\cap L_{a,t,q}^{q-1}(\mathbb{R}^N) \ \left(u_+ \in L_{s,p}^{p-1}(\mathbb{R}^N)\cap L_{a,t,q}^{q-1}(\mathbb{R}^N)\right)$.
        \item $u>-\infty$ in $\Omega$ and $u<\infty$ a.e in $\mathbb{R}^N$.
        \item For every $B_r(x_0)\subset \Omega$ and  $\psi \in L_{s,p}^{p-1}(\mathbb{R}^N) \cap L_{a,t,q}^{q-1}(\mathbb{R}^N) \cap C^2(B_r(x_0))$ with $\psi \leq u \ (\psi \geq u)$ and $\psi(x_0)=u(x_0)$ satisfying one among the following two conditions
        \begin{enumerate}[(a)]
            \item $p>\frac{2}{2-s}$ or $\nabla \psi(x_0) \neq 0$,
            \item $p\leq\frac{2}{2-s}$, $x_0$ is an isolated critical point of $\psi$ and $\psi \in C_\beta^2(B_r(x_0))$ for a $\beta>\frac{sp}{p-1}$,
        \end{enumerate}
     we have $$L_a\psi(x_0)\geq (\leq ) f(x_0,u(x_0), D_s^p \psi(x_0), D_{a,t}^q \psi(x_0)).$$
    \end{enumerate}
    A function $u$ is said to be a viscosity solution to problem \eqref{D} if it is both a viscosity supersolution and a viscosity subsolution to problem \eqref{D}.
\end{definition}
We define weak solutions to problem \eqref{D} as follows.
\begin{definition}\label{D-WS}
    Let $u\in W^{s,p}(\mathbb{R}^N)\cap  L_{s,p}^{p-1}(\mathbb{R}^N)\cap L_{a,t,q}^{q-1}(\mathbb{R}^N)$. Then, $u$ is said to be a weak supersolution (subsolution) to problem \eqref{D} in $\Omega$ if for every $v\in C_c^\infty(\Omega)$ with $v\geq 0$, $u$ satisfies
    \begin{equation}\label{D-WSE}
        H_a(u,v)\geq (\leq )\int_{\Omega}f(x,u(x),D_s^p u(x), D_{a,t}^q u(x))v(x)dx.
    \end{equation}
    A function $u$ is said to be a weak solution to problem \eqref{D} if equality holds in \eqref{D-WSE} for all $v\in C_c^\infty(\Omega)$.
\end{definition}
Next, we introduce the infimal convolutions of a function.
\begin{definition}\label{D-IC}
    Let $u:\mathbb{R}^N \rightarrow\mathbb{R}$ and choose $l>0$ such that 
    $$l=\max\bigg\{2,\frac{sp}{p-1}\bigg\}.$$
    Given $\epsilon>0$, we define
    $$u_\epsilon(x_0)=\inf\limits_{x\in \mathbb{R}^N}\left(u(x)+\frac{|x_0-x|^l}{l\epsilon^{l-1}}\right).$$
\end{definition}
We have the following result by Lemma 3.1(i) in \cite{BM2021}. 
\begin{lemma}\label{D-u_ep}
    Let $u:\mathbb{R}^N\rightarrow \mathbb{R}$ be a bounded lower semicontinuous function. Then, there exists $r(\epsilon)>0$ satisfying $r(\epsilon)\rightarrow0$ as $\epsilon\rightarrow0$ and
    $$u_\epsilon(x_0)=\inf\limits_{x\in B_{r(\epsilon)}(x_0)}\left(u(x)+\frac{|x_0-x|^l}{l\epsilon^{l-1}}\right).$$
\end{lemma}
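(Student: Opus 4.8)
\textbf{Proof proposal for Lemma \ref{D-u_ep}.}

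The plan is to exploit boundedness of $u$ to show that any minimizing sequence in the definition of $u_\epsilon(x_0)$ must stay in a small ball around $x_0$, whose radius tends to $0$ as $\epsilon\to 0$. First I would record the trivial upper bound: taking $x=x_0$ in the infimum gives $u_\epsilon(x_0)\le u(x_0)\le \sup_{\mathbb{R}^N} u=:M$. Next, writing $m:=\inf_{\mathbb{R}^N} u>-\infty$, for any competitor $x$ we have
\begin{equation*}
u(x)+\frac{|x_0-x|^l}{l\epsilon^{l-1}}\ge m+\frac{|x_0-x|^l}{l\epsilon^{l-1}}.
\end{equation*}
Hence if $|x_0-x|\ge r$ then this competitor is bounded below by $m+\frac{r^l}{l\epsilon^{l-1}}$. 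Choosing $r=r(\epsilon):=\big(l\epsilon^{l-1}(M-m+1)\big)^{1/l}$ makes that lower bound equal to $M+1>M\ge u_\epsilon(x_0)$, so no point outside $B_{r(\epsilon)}(x_0)$ can be part of a minimizing sequence. Therefore the infimum over $\mathbb{R}^N$ coincides with the infimum over $B_{r(\epsilon)}(x_0)$, and clearly $r(\epsilon)\to 0$ as $\epsilon\to 0$ since $l>1$ forces $\epsilon^{l-1}\to 0$.

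To make this airtight I would argue by contradiction or directly: suppose $y\in\mathbb{R}^N\setminus B_{r(\epsilon)}(x_0)$; then $u(y)+\frac{|x_0-y|^l}{l\epsilon^{l-1}}\ge m+\frac{r(\epsilon)^l}{l\epsilon^{l-1}}=M+1$, which strictly exceeds the value $u(x_0)$ attained by the competitor $x=x_0$. Consequently
\begin{equation*}
\inf_{x\in\mathbb{R}^N}\Big(u(x)+\frac{|x_0-x|^l}{l\epsilon^{l-1}}\Big)=\inf_{x\in B_{r(\epsilon)}(x_0)}\Big(u(x)+\frac{|x_0-x|^l}{l\epsilon^{l-1}}\Big),
\end{equation*}
which is exactly the claimed identity. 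The lower semicontinuity of $u$ is not strictly needed for the localization itself, but it guarantees that the infimum over the closed (or open) ball is actually attained, which is the form in which the statement is typically used downstream; I would mention it for completeness.

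There is essentially no serious obstacle here — the argument is a one-line consequence of boundedness plus the coercivity of the penalization term $|x_0-x|^l/(l\epsilon^{l-1})$. The only mild care needed is to be explicit that the same radius $r(\epsilon)$ works uniformly in $x_0$ (it depends only on $\sup u$, $\inf u$, $l$, and $\epsilon$), and to note that the exponent $l=\max\{2,\frac{sp}{p-1}\}>1$ ensures $\epsilon^{l-1}\to 0$, hence $r(\epsilon)\to 0$. Since this is verbatim Lemma 3.1(i) of \cite{BM2021}, one could alternatively just cite it; but the short self-contained proof above is preferable for the reader.
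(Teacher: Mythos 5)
Your argument is correct: the coercivity of the penalization term $|x_0-x|^l/(l\epsilon^{l-1})$ together with the two-sided bound on $u$ forces any competitor outside $B_{r(\epsilon)}(x_0)$ with $r(\epsilon)=(l\epsilon^{l-1}(\sup u-\inf u+1))^{1/l}$ to exceed the value of the competitor $x=x_0$, and $r(\epsilon)\to 0$ because $l>1$. The paper itself gives no proof and simply cites Lemma 3.1(i) of \cite{BM2021}; your write-up is exactly the standard argument behind that citation, so there is nothing to correct.
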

Finally, we discuss the concept of double phase De Giorgi classes.
\begin{definition}\label{D-DGC}
    Let $\mathcal{D}\subset\mathbb{R}^N$ be an open set and $v\in W^{t,q}(\mathcal{D})\cap W^{s,p}(\mathcal{D})\cap  L_{s,p}^{p-1}(\mathbb{R}^N)\cap L_{a,t,q}^{q-1}(\mathbb{R}^N)$, where $N\in\mathbb{N}$ and $1\leq p\leq q<\infty, \ 0<s,t<1$ with $tq\leq sp$. For $\theta\geq0, \ \tilde{C}\geq 1,\ \tilde{k}\in \mathbb{R}\cup\{-\infty\},\ 0<\epsilon<\frac{tq}{N},\ \rho\geq 0$ and $0<\tilde{R}\leq \infty$, the function $v$ belongs to the double phase De Giorgi class $\operatorname{DG}_a^+(\mathcal{D}, \theta, \tilde{C}, \tilde{k}, \tilde{\epsilon}, \rho, \tilde{R})$ if for every $x_0\in\Omega$ and $0<r<R\leq \max\{\operatorname{dist(x_0,\partial\mathcal{D})},\tilde{R}\},\ k\geq\tilde{k}$, we have
    \begin{align}\label{D-DG}
         [\psi_+&]_{W_{s,p}(B_{r}(x_0))}^p+[\psi_+]_{W_{t,q,a}(B_{r}(x_0))}^q\nonumber\\
         &+\int_{B_{r}(x_0)}\int_{B_{2\tilde{R}}(x_0)}\psi_+(x)\bigg(\frac{\psi_-(y)^{p-1}}{|x-y|^{N+sp}}+\frac{a(x,y)\psi_-(y)^{q-1}}{|x-y|^{N+tq}}\bigg)dydx \nonumber\\
        &\ \ \  \leq \tilde{C}\Bigg(\frac{R^{(1-s)p}}{(R-r)^p}\|\psi_+\|_{L^p(B_R(x_0))}^p+\frac{R^{(1-t)q}}{(R-r)^q}\|\psi_+\|_{L^q(B_R(x_0))}^q\nonumber\\
        &\ \ \quad +\bigg(\frac{R}{R-r}\bigg)^{N+sp}r^{-sp}\|\psi_+\|_{L^1(B_R(x_0))}\left(\operatorname{Tail}_{s,p}(\psi_+,x_0,r)\right)^{p-1}\nonumber\\
        &\ \ \quad +\bigg(\frac{R}{R-r}\bigg)^{N+tq}r^{-tq}\|\psi_+\|_{L^1(B_R(x_0))}\left(\operatorname{Tail}_{g,t,q}(\psi_+,x_0,r)\right)^{q-1}\nonumber\\
        &\ \ \quad +\left(R^\rho\theta^q+\frac{|k|^q}{R^{N\tilde{\epsilon}}}\right)|\operatorname{supp}\psi_+ \cap B_R(x_0)|^{1-\frac{tq}{N}+\tilde{\epsilon}}\Bigg),
    \end{align}
    where $\psi=v-k$. The function $v$ is in $\operatorname{DG}_a^-(\mathcal{D}, \theta, \tilde{C}, \tilde{k}, \tilde{\epsilon}, \rho, \tilde{R})$ if \eqref{D-DG} holds for $\psi=k-v$. The function $v$ is in the double phase {De Giorgi} class $\operatorname{DG}_a(\mathcal{D}, \theta, \tilde{C}, \tilde{k}, \tilde{\epsilon}, \rho, \tilde{R})$ if it is in both $\operatorname{DG}_a^+(\mathcal{D}, \theta, \tilde{C}, \tilde{k}, \tilde{\epsilon}, \rho, \tilde{R})$ and $\operatorname{DG}_a^-(\mathcal{D}, \theta, \tilde{C}, \tilde{k}, \tilde{\epsilon}, \rho, \tilde{R})$.
\end{definition}
We conclude this section with the following remark.
\begin{remark}\label{D-DGC-R}
    Since $u_-=(-u)_+$ for a function $u:\mathbb{R}^N \rightarrow\mathbb{R}$, if $u\in \operatorname{DG}_a^+(\mathcal{D}, \theta, \tilde{C}, \tilde{k}, \tilde{\epsilon}, \rho, \tilde{R})$, then we have $-u\in \operatorname{DG}_a^-(\mathcal{D}, \theta, \tilde{C}, \tilde{k}, \tilde{\epsilon}, \rho, \tilde{R})$ and vice-versa.
\end{remark}

\section{Comparison principle}\label{D-S3}
\noindent The comparison principle between weak subsolutions and supersolutions to problem \eqref{D} plays a crucial role in proving Theorem \ref{D-T1}. Unfortunately, literature on Definition \ref{D-CP} has not been obtained for all $f:\Omega \times \mathbb{R}^3\rightarrow \mathbb{R}$. Some of the examples where Definition \ref{D-CP} is satisfied are the following.
\begin{enumerate}
    \item the homogeneous non-local double phase equation, i.e., the case $f\equiv 0$ in problem \eqref{D}.
    \item problem \eqref{D} with $f=f(x,u):=g(x)|u|^{p-2}u$ with $g\leq 0$ and an additional condition $u\geq 0$. 
\end{enumerate}
In this section, we discuss the proof of the comparison principle for a sub-case of problem \eqref{D}.
\begin{theorem}\label{D-CP1}
    Let $\mathcal{D}\subset \mathbb{R}^N$ be an open set and $u,v$ be weak supersolution and subsolution respectively to problem \eqref{D} in $\mathcal{D}$, where $f:=f(x,t,\zeta,\eta)$ is independent of $\zeta$ and $\eta$ and $f$ is non-increasing in $t$. Assume also that $u\geq v$ a.e. in $\mathbb{R}^N \setminus \mathcal{D}$. Then, we have $u\geq v$ a.e. in $\mathbb{R}^N$.
\end{theorem}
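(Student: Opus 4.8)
The plan is to use the standard comparison argument for nonlocal monotone operators, testing the two weak formulations against the positive part of $v-u$. First I would set $w = (v-u)_+$. Since $u \geq v$ a.e.\ in $\mathbb{R}^N \setminus \mathcal{D}$, we have $w = 0$ a.e.\ outside $\mathcal{D}$, so $w$ is an admissible test function direction; by a density argument (approximating $w$ by $C_c^\infty(\mathcal{D})$ functions, using that $u, v \in W^{s,p}(\mathbb{R}^N)$ and the tail conditions) I may use $w$ itself in both weak formulations \eqref{D-WSE}. Subtracting the supersolution inequality for $u$ (tested with $w \geq 0$) from the subsolution inequality for $v$ (tested with $w \geq 0$) gives
\begin{align*}
    \int_{\mathbb{R}^N}\!\!\int_{\mathbb{R}^N}\!\big(h_p(v(x)-v(y))-h_p(u(x)-u(y))\big)(w(x)-w(y))K_{s,p}(x,y)\,dy\,dx & \\
    +\int_{\mathbb{R}^N}\!\!\int_{\mathbb{R}^N}\!a(x,y)\big(h_q(v(x)-v(y))-h_q(u(x)-u(y))\big)(w(x)-w(y))K_{t,q}(x,y)\,dy\,dx & \\
    \leq \int_{\mathcal{D}} \big(f(x,v(x))-f(x,u(x))\big)\,w(x)\,dx. &
\end{align*}

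The next step is to show the right-hand side is $\leq 0$. On the set $\{v > u\}$ where $w > 0$ we have $v(x) > u(x)$, and since $f$ is non-increasing in its second argument, $f(x,v(x)) - f(x,u(x)) \leq 0$; on $\{v \leq u\}$ we have $w = 0$. Hence the right-hand side is nonpositive, so the sum of the two double integrals on the left is $\leq 0$.

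The heart of the argument is then to show that the left-hand side is $\geq 0$, with equality forcing $w \equiv 0$. This is the standard monotonicity estimate for the fractional $p$- and $q$-Laplacians: for each kernel term one splits $\mathbb{R}^N \times \mathbb{R}^N$ according to whether $x$ and/or $y$ lie in $\{v > u\}$, and uses the elementary inequality that for $a \mapsto h_r(a) = |a|^{r-2}a$ one has $(h_r(A) - h_r(B))(A' - B') \geq 0$ whenever the ordering of $A - B$ matches that of $A' - B'$, together with the pointwise fact that $w(x) - w(y)$ has the same sign as $(v-u)(x) - (v-u)(y)$ when at least one of the points is in $\{v>u\}$ (and the integrand vanishes when both are outside). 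Concretely, writing $V = v(x)-v(y)$, $U = u(x)-u(y)$, one checks $(h_r(V) - h_r(U))(w(x)-w(y)) \geq c_r |w(x)-w(y)|^r \geq 0$ (or at least $\geq 0$) in all cases, using $K_{s,p}, K_{t,q}, a > 0$. Therefore both double integrals are nonnegative, and combined with the previous paragraph they must both vanish. Vanishing of, say, the $p$-term forces $w(x) - w(y) = 0$ for a.e.\ $(x,y)$, i.e.\ $w$ is a.e.\ constant on $\mathbb{R}^N$; since $w = 0$ on the nonempty set $\mathbb{R}^N \setminus \mathcal{D}$, we conclude $w \equiv 0$, that is, $u \geq v$ a.e.\ in $\mathbb{R}^N$.

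I expect the main obstacle to be the \emph{admissibility of $w = (v-u)_+$ as a test function}: Definition \ref{D-WS} only permits $v \in C_c^\infty(\Omega)$, so one must justify the extension to $w$ by approximation, carefully controlling the nonlocal tail contributions (the integrals over $(\mathbb{R}^N \setminus \mathcal{D}) \times \mathcal{D}$ etc.) using $u, v \in L_{s,p}^{p-1}(\mathbb{R}^N) \cap L_{a,t,q}^{q-1}(\mathbb{R}^N)$ and the finiteness of $H_a(u, \cdot)$ — this is where the structural hypotheses on the kernels and on $a$ are really used. A secondary technical point is verifying $w \in W^{s,p}(\mathbb{R}^N)$ (so that $H_a(u,w), H_a(v,w)$ are well-defined and finite), which follows from $u, v \in W^{s,p}(\mathbb{R}^N)$ and the Lipschitz-ness of $a \mapsto a_+$, and that the $f$-integral on the right is finite since $f(\cdot, u), f(\cdot, v) \in L^1_{\mathrm{loc}}$ by continuity and $w$ has compact support in $\mathcal{D}$ after the approximation is in place.
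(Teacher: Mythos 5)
Your proof is correct and follows essentially the same strategy as the paper: test both weak formulations with $(v-u)_+$, use that $f$ is non-increasing in its second argument to sign the source term, and use the monotonicity of $h_p,h_q$ to sign the resulting bilinear expressions, concluding that $(v-u)_+$ is a.e.\ constant and hence zero. The only (minor) difference is that the paper makes the sign manifest via the identity $h_l(a)-h_l(b)=(l-1)(a-b)\int_0^1|\tau a+(1-\tau)b|^{l-2}\,d\tau$ rather than invoking monotonicity directly, and you are actually more explicit than the paper about justifying $(v-u)_+$ as an admissible test function by density.
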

\begin{proof}
    Consider the test function $\phi=(v-u)_+\geq 0$.  Denote
  \begin{align*}
        I_1&=\int_{\mathbb{R}^N}\int_{\mathbb{R}^N}\bigg(h_p(u(x)-u(y))-h_p(v(x)-v(y))\bigg)\\
        &\hspace{2cm}\times\bigg((v-u)_+(x)-(v-u)_+(y)\bigg)K_{s,p}(x,y)dxdy, \\
        I_2&=\int_{\mathbb{R}^N}\int
        _{\mathbb{R}^N}a(x,y)\bigg(h_q(u(x)-u(y))-h_q(v(x)-v(y))\bigg)\\
        &\hspace{2cm}\times\bigg((v-u)_+(x)-(v-u)_+(y)\bigg)K_{t,q}(x,y)dxdy.
    \end{align*}
    From the definition of weak sub and supersolutions to problem \eqref{D}, we deduce
    \begin{align}\label{D-CP1-1}
        I_1+I_2 \geq \int_{\mathcal{D}}\big(f(x,u,D_s^p u,D_{a.t}^q u)-f(x,v,D_s^p v,D_{a.t}^q v)\big)(v-u)_+(x) dx \geq 0.
    \end{align}
    The second inequality holds in \eqref{D-CP1-1} since $f(x,u,D_s^p u,D_{a.t}^q u)\geq f(x,v,D_s^p v,D_{a.t}^q v)$ in the set {$\operatorname{supp}(v-u)_+$}.
    From Lemma 2.4 in \cite{BM2021}, we have 
    \begin{equation}\label{D-CP1-2}
        |a|^{l-2}a-|b|^{l-2}b =(l-1)(a-b)\int_0^1 |\tau a+(1-\tau)b|^{p-2} d\tau, \ a,b \in \mathbb{R}, \ 1<l<\infty.
    \end{equation}
    Applying \eqref{D-CP1-2} for $a=u(x)-u(y)$, $b=v(x)-v(y)$ and $l=p$ in $I_1$, we obtain
    \begin{align}\label{D-CP1-3}
        I_1=(p-1)\int_{\mathbb{R}^N}\int_{\mathbb{R}^N}&\bigg((v-u)_+(x)-(v-u)_+(y)\bigg)(u(x)-u(y)-(v(x)-v(y)))\nonumber\\
        & \times \int_0^1 \bigg|u(x)-u(y)+\tau (v(x)-v(y))\bigg|^{p-2}d\tau K_{s,p}(x,y)dxdy.
    \end{align}
    It can be easily observed that
    \begin{align}\label{D-CP1-4}
        \bigg((v-u)_+(x)-(v-u)_+(y)\bigg)&\bigg(u(x)-u(y)-(v(x)-v(y))\bigg)\nonumber\\
        &\leq -\bigg((v-u)_+(x)-(v-u)_+(y)\bigg)^2.
    \end{align}
    Applying \eqref{D-CP1-4} in \eqref{D-CP1-3}, we get 
    \begin{align}\label{D-CP1-5}
        I_1 \leq -(p-1)\int_{\mathbb{R}^N}\int_{\mathbb{R}^N}&(v(x)-u(x))_+-(v(y)-u(y))_+\bigg)^2\nonumber\\
        & \times \int_0^1 \bigg|u(x)-u(y)+\tau (v(x)-v(y))\bigg|^{p-2}d\tau K_{s,p}(x,y)dxdy \leq0.
    \end{align}
    Similarly, we also obtain
    \begin{align}\label{D-CP1-6}
        I_2\leq  -(q-1)\int_{\mathbb{R}^N}\int_{\mathbb{R}^N}&a(x,y)\bigg((v(x)-u(x))_+-(v(y)-u(y))_+\bigg)^2\nonumber\\
        & \times \int_0^1 \bigg|u(x)-u(y)+\tau (v(x)-v(y))\bigg|^{q-2}d\tau K_{t,q}(x,y)dxdy \leq0.
    \end{align}
    Therefore, from \eqref{D-CP1-1}, \eqref{D-CP1-5} and \eqref{D-CP1-6}, we deduce
    \begin{align*}
        &\int_{\mathbb{R}^N}\int_{\mathbb{R}^N}\bigg((v(x)-u(x))_+-(v(y)-u(y))_+\bigg)^2\\
         &\hspace{1cm}\times \int_0^1 \bigg|u(x)-u(y)+\tau (v(x)-v(y))\bigg|^{p-2}d\tau K_{s,p}(x,y)dxdy \nonumber\\
         &+ \int_{\mathbb{R}^N}\int_{\mathbb{R}^N}a(x,y)\bigg((v(x)-u(x))_+-(v(y)-u(y))_+\bigg)^2 \\
         &\hspace{1cm}\times \int_0^1 \bigg|u(x)-u(y)+\tau (v(x)-v(y))\bigg|^{q-2}d\tau K_{t,q}(x,y)dxdy =0.
    \end{align*}
    Thus, we obtain that $(u(x)-v(x))_+=(u(y)-v(y))_+$ for a.e. $x,y \in \mathbb{R}^N$. Then, $(u-v)_+$ is a constant a.e. 
    in $\mathbb{R}^N$. Since we have $(u-v)_+=0$ a.e. in $\mathbb{R}^N\setminus \mathcal{D}$, we conclude that $(u-v)_+=0$ a.e. in $\mathbb{R}^N$. This completes the proof.
\end{proof}

\section{Weak solutions are viscosity solutions}\label{D-S4}
This section is devoted to prove Theorem \ref{D-T1}. We first establish the following lemma.
\begin{lemma}\label{D-L1}
Let $v\in L_{a, t,q}^{q-1}(\mathbb{R}^N)$ be Lipschitz continuous in $B_r(x_0)\subset \mathbb{R}^N$ and $0<\delta<r$. Then, for any $\epsilon>0$ and $\zeta \in C_c^2(B_r(x_0))$ with $0\leq \zeta\leq 1$, there {exists} $\alpha'>0$ such that whenever $0\leq \alpha<\alpha'$,
$$\sup\{|D_{a,t}^qv(x)-D_{a,t}^q(v+\alpha\zeta)(x)|:x\in B_\delta(x_0)\}<\epsilon.$$    
\end{lemma}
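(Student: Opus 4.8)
The plan is to estimate the difference $D_{a,t}^q v(x) - D_{a,t}^q(v+\alpha\zeta)(x)$ uniformly for $x \in B_\delta(x_0)$ by splitting the defining integral over $\mathbb{R}^N$ into a region near $x$ (inside a fixed larger ball, say $B_r(x_0)$) and a region far from $x$. First I would write
\begin{align*}
    D_{a,t}^q(v+\alpha\zeta)(x) - D_{a,t}^q v(x) = \int_{\mathbb{R}^N} a(x,y)\,\frac{|(v+\alpha\zeta)(x)-(v+\alpha\zeta)(y)|^q - |v(x)-v(y)|^q}{|x-y|^{N+tq}}\,dy,
\end{align*}
and then apply the elementary inequality $\big||A|^q - |B|^q\big| \le q\,(|A|+|B|)^{q-1}|A-B|$ with $A = (v+\alpha\zeta)(x)-(v+\alpha\zeta)(y)$ and $B = v(x)-v(y)$, so that $|A-B| = \alpha|\zeta(x)-\zeta(y)| \le \alpha\cdot\min\{2, \operatorname{Lip}(\zeta)|x-y|\}$ and $|A|+|B| \le 2|v(x)-v(y)| + \alpha|\zeta(x)-\zeta(y)|$, the latter bounded by $2|v(x)-v(y)| + 2\alpha$ (say, assuming $\alpha < 1$).

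The near region: on $B_r(x_0)$ (which contains $x$ and a neighborhood), $v$ is Lipschitz, so $|v(x)-v(y)| \le L|x-y|$ with $L = \operatorname{Lip}(v; B_r(x_0))$; together with $|\zeta(x)-\zeta(y)| \le \operatorname{Lip}(\zeta)|x-y|$ the integrand near $x$ is controlled by $C\alpha\,(L|x-y| + \alpha)^{q-1}|x-y| / |x-y|^{N+tq} \le C\alpha\,|x-y|^{q-N-tq}(L^{q-1}+\alpha^{q-1})$ plus a similar cruder term, which is integrable over $B_r(x_0)$ near the singularity since $q - N - tq > -N$ is equivalent to $tq < q$, true as $t<1$. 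Hence the near contribution is bounded by $C(L,\zeta,r,N,t,q)\,\alpha$. The far region: for $y \notin B_r(x_0)$ and $x \in B_\delta(x_0)$ with $\delta < r$, we have $|x-y| \ge r - \delta > 0$, so using $|A-B| \le 2\alpha$ and the crude bound $|A|+|B| \le 2|v(x)|+2|v(y)| + 2\alpha$, the far integrand is at most $C\alpha\,a(x,y)\,(|v(x)| + |v(y)| + 1)^{q-1}/|x-y|^{N+tq}$; since $v \in L_{a,t,q}^{q-1}(\mathbb{R}^N)$ and $x$ ranges over the bounded set $B_\delta(x_0)$, one checks (comparing $|x-y|$ with $1+|y|$ for $|x-y|\ge r-\delta$, and using $|v(x)|$ bounded by Lipschitz continuity on $B_r(x_0)$) that this integrates to something bounded by $C(v, a, r, \delta, N, t, q)\,\alpha$ uniformly in $x$.

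Combining the two regions gives $\sup_{x \in B_\delta(x_0)}|D_{a,t}^q v(x) - D_{a,t}^q(v+\alpha\zeta)(x)| \le C_0\,\alpha$ for a constant $C_0$ depending only on $v, \zeta, a, r, \delta, N, t, q$ (and the bound $\alpha<1$), so choosing $\alpha' = \min\{1, \epsilon/(C_0+1)\}$ finishes the proof. The main obstacle I anticipate is the careful bookkeeping in the far region: one must verify that the weighted tail condition $v \in L_{a,t,q}^{q-1}(\mathbb{R}^N)$ — which involves weights $(1+|y|)^{-(N+tq)}$ and a supremum over the first argument of $a$ — genuinely dominates the integral $\int_{\mathbb{R}^N\setminus B_r(x_0)} a(x,y)(|v(x)|+|v(y)|+1)^{q-1}|x-y|^{-(N+tq)}\,dy$ uniformly for $x \in B_\delta(x_0)$; this requires the equivalence $|x-y| \simeq 1+|y|$ valid for $|y|$ large and $x$ in a fixed bounded set, plus splitting off the contribution of the $|v(x)|^{q-1}$ term (handled by Lipschitz boundedness of $v$ on $B_r(x_0)$ and finiteness of $\int_{\mathbb{R}^N\setminus B_r(x_0)} a(x,y)|x-y|^{-(N+tq)}\,dy$). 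Everything else is a routine application of the convexity inequality for $|\cdot|^q$ and integrability of $|x-y|^{q-N-tq}$ near the origin.
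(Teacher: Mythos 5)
Your proof is essentially correct, and it takes a cleaner route than the paper's. In the near region the paper bounds $\bigl||w_0|^q-|w_\alpha|^q\bigr|$ by $C\bigl(|v(x)-v(y)|^q+\alpha^q|\zeta(x)-\zeta(y)|^q\bigr)$, whose first term persists as $\alpha\to 0$; this forces a two-stage choice in the end (first shrink a splitting radius $\mu$ so that $\mu^{q(1-t)}$ is small, then shrink $\alpha'$ with $\mu$ fixed). You instead apply the Lipschitz-type inequality $\bigl||A|^q-|B|^q\bigr|\lesssim(|A|+|B|)^{q-1}|A-B|$ on \emph{both} regions, extracting a full factor of $\alpha$ from $|A-B|=\alpha|\zeta(x)-\zeta(y)|$ everywhere and arriving directly at the linear bound $\sup_{B_\delta(x_0)}|D_{a,t}^q v-D_{a,t}^q(v+\alpha\zeta)|\le C_0\alpha$, so $\alpha'$ is chosen in one step with the near ball fixed at $B_r(x_0)$. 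One slip to repair: in the near-region estimate you keep the bound $(|A|+|B|)^{q-1}\le(L|x-y|+\alpha)^{q-1}$, and the resulting piece $\alpha^{q-1}|x-y|^{1-N-tq}$ of the integrand is not integrable near the singularity when $tq\ge 1$ (which is allowed, since only $t<1$ is assumed). The fix is immediate: for $x\in B_\delta(x_0)$ and $y\in B_r(x_0)$ both $v$ and $\zeta\in C_c^2(B_r(x_0))$ are Lipschitz, so $\alpha|\zeta(x)-\zeta(y)|\le\alpha\operatorname{Lip}(\zeta)|x-y|$ and hence $|A|+|B|\le C|x-y|$ with no additive constant, giving integrand $\le C\alpha|x-y|^{q-N-tq}$, which is integrable because $q-tq>0$. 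Your far-region analysis — comparing $|x-y|$ with $1+|y|$ uniformly for $x\in B_\delta(x_0)$ and $y\notin B_r(x_0)$, bounding $|v(x)|$ by Lipschitz continuity on $B_r(x_0)$, and invoking $0<a\le M$ together with the weighted tail condition — matches the paper's in substance and is sound.
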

\begin{proof}
    Let $\epsilon>0$ and let $x\in B_\delta(x_0)$. For simplicity, we denote $$w_\alpha(x,y)=v(x)+\alpha\zeta(x)-v(y)-\alpha\zeta(y)~\text{for}~\alpha\geq0.$$
    Therefore, we have
    \begin{align}\label{D-L1-1}
        |D_{a,t}^qv(x)-D_{a,t}^q(v+\alpha\zeta)(x)|&=\bigg|\int_{B_\mu(x_0)}a(x,y)(|w_0(x,y)|^q-|w_\alpha(x,y)|^q)K_{t,q}(x,y)dy \nonumber\\
        &+\int_{\mathbb{R}^N \setminus B_\mu(x_0)}a(x,y)(|w_0(x,y)|^q-|w_\alpha(x,y)|^q)K_{t,q}(x,y)dy\bigg|\nonumber\\
        &\leq \int_{B_\mu(x_0)}a(x,y)\bigg||w_0(x,y)|^q-|w_\alpha(x,y)|^q\bigg|K_{t,q}(x,y)dy\nonumber\\
        & +\int_{\mathbb{R}^N \setminus B_\mu(x_0)}a(x,y)\bigg||w_0(x,y)|^q-|w_\alpha(x,y)|^q\bigg|K_{t,q}(x,y)dy.
    \end{align}
    Taking advantage of the Lipschitz continuity of $v,\zeta$, for all $y\in B_\mu(x_0)$ we attain 
    \begin{align}\label{D-L1-3}
        \bigg||w_0(x,y)|^q-|w_\alpha(x,y)|^q\bigg|&\leq C(q)\left(|v(x)-v(y)|^q+\alpha^q |\zeta(x)-\zeta(y)|^q\right)\nonumber\\
        &\leq C(q, \|\nabla v\|_{L^{\infty}(B_{r+\delta}(x_0))},\|\nabla \zeta\|_{L^{\infty}(B_{r+\delta}(x_0))})(1+\alpha^q)|x-y|^q.
    \end{align}
    Now, from Lemma 3.4 in \cite{KKL2019}, we have
    \begin{equation}\label{D-L1-2}
        \bigg||t_1|^q-|t_2|^q\bigg|\leq C(q)|t_1-t_2|(|t_2|+|t_1-t_2|)^{q-1} ,\ t_1,t_2 \in \mathbb{R}.
    \end{equation}
    Note that 
    $$|w_0(x,y)-w_\alpha(x,y)|=\alpha|\zeta(x)-\zeta(y)|\leq 2\alpha,$$
    Consequently, employing \eqref{D-L1-3} in the first integral in \eqref{D-L1-1} and applying \eqref{D-L1-2} for $t_1=w_\alpha(x,y)$, $t_2=w_0(x,y)$ in the second integral in the RHS of \eqref{D-L1-1}, we obtain
    \begin{align*}
        |D_{a,t}^qv(x)-D_{a,t}^q(v+\alpha\zeta)(x)|&\leq C\int_{B_\mu(x_0)}a(x,y)(1+\alpha^q)|x-y|^q K_{t,q}(x,y)dy\nonumber\\
        &\ \ +C(q)\alpha \int_{\mathbb{R}^N \setminus B_\mu(x_0)}a(x,y)(2\alpha+|v(x)|+|v(y)|)^{q-1}K_{t,q}(x,y)dy\nonumber\\
        &\leq CM{\Lambda_2}\mu^{q(1-t)}(1+\alpha^q)+CM{\Lambda_2}\alpha^{q}\mu^{-tq}\nonumber\\
        &\ \ +CM{\Lambda_2}\alpha\|v\|_{L^{\infty}(B_r(x_0))}\mu^{-tq}+C{\Lambda_2}\alpha\mu^{-tq}\left(\operatorname{Tail}_{a,t,q}(v;x_0,\delta)\right)^{q-1}.
    \end{align*}
    Thus, first choosing $\mu$  and then choosing an $\alpha'$ sufficiently small and taking supremum over all $x\in B_\delta(x_0)$, we deduce that for $0\leq \alpha<\alpha'$,
    \begin{equation*}
        \sup\{|D_{a,t}^qv(x)-D_{a,t}^q(v+\alpha\zeta)(x)|:x\in B_\delta(x_0)\}<\epsilon.
    \end{equation*}
    Hence, the proof is complete.
\end{proof}
We now conclude this section by proving Theorem \ref{D-T1}. The proof requires $f$ to satisfy certain continuity properties and the comparison principle.  
\medskip

\noindent {\it{\bf{Proof of Theorem} \ref{D-T1}.}}
    Assume that $u$ is not a viscosity supersolution. Then, there exist $B_r(x_0)\subset \Omega$, $\psi \in  L_{s,p}^{p-1}(\mathbb{R}^N) \cap L_{a,t,q}^{q-1}(\mathbb{R}^N) \cap C^2(B_r(x_0))$ with $\psi(x_0)=u(x_0)$, $\psi \leq u$ in $\mathbb{R}^N$ and satisfying the condition $4(a)$ or $4(b)$ in Definition \ref{D-VS} such that the following inequality
    \begin{equation}\label{D-T1-1}
        L_a\psi(x_0) < f(x_0, \psi(x_0), D_s^p \psi(x_0), D_{a,t}^q \psi(x_0))=f(x_0, u(x_0), D_s^p \psi(x_0),D_{a,t}^q \psi(x_0))
    \end{equation}
    holds. Applying Lemma 4.5 in \cite{FZ2023}, we deduce that $L_a\psi$ is continuous in $B_r(x_0)$. Also, from the hypotheses we have the map $x \mapsto f(x, u(x), D_s^p \psi(x), D_{a,t}^q \psi(x))$ is continuous in $B_r(x_0)$. Since $\overline{B_{r'}(x_0)}$ is compact for all $0<r'<r$, from \eqref{D-T1-1}, there exist $0<\mu<r$ and $\epsilon>0$ such that 
    \begin{equation}\label{D-T1-2}
        L_a\psi(x) \leq f(x, u(x), D_s^p \psi(x), D_{a,t}^q \psi(x))-\epsilon, \text{ for } x \in B_\mu(x_0).
    \end{equation}
    By Lemma 4.6 in \cite{FZ2023}, there exist $\beta'>0$, $0<\mu'<\mu$ and $\zeta \in C_c^2\Big(B_\frac{\mu'}{2}(x_0)\Big)$ with $0 \leq \zeta \leq 1$ and $\zeta(x_0)=1$ such that on defining $\psi_\beta=\psi+\beta \zeta$, we have
    \begin{equation}\label{D-T1-3}
        \sup\limits_{B_{\mu'(x_0)}} |L_a\psi(x)-L_a\psi_\beta(x)|<\frac{\epsilon}{4}, \text{ for } 0\leq \beta <\beta'.
    \end{equation}
    By the Lipschitz continuity of $\zeta \mapsto f(x,t,\zeta,\eta)$, choose $\delta>0$ satisfying
    \begin{equation}\label{D-T1-4}
        |f(x,u(x),\zeta,\eta)-f(x,u(x),\zeta',\eta)|<\frac{\epsilon}{8} \text{ whenever } |\zeta-\zeta'|<\delta.
    \end{equation}
    From Lemma 2.6 in \cite{BM2021}, we get a $0<\beta''<\beta'$ such that
   \begin{equation}\label{D-T1-5}
        \sup\limits_{B_{\mu'(x_0)}}|D_s^p \psi-D_s^p \psi_\beta|<\delta, \text{ for } 0\leq \beta <\beta''.
    \end{equation}
    Similarly, by the Lipschitz continuity of $\eta \mapsto f(x,t,\zeta,\eta)$, there {exists} $\delta'>0$ such that
    \begin{equation}\label{D-T1-4'}
        |f(x,u(x),\zeta,\eta)-f(x,u(x),\zeta,\eta')|<\frac{\epsilon}{8} \text{ whenever } |\eta-\eta'|<\delta'.
    \end{equation}
    By Lemma \ref{D-L1}, we get a $0<\beta'''<\beta''$  such that
    \begin{equation}\label{D-T1-5'}
        \sup\limits_{B_{\mu'(x_0)}}|D_{a,t}^q \psi-D_{a,t}^q \psi_\beta|<\delta', \text{ for } 0\leq \beta <\beta'''.
    \end{equation}
    Now, choose $0<\beta <\beta'''$. Combining \eqref{D-T1-2}--\eqref{D-T1-5'}, we obtain
    \begin{align}\label{D-T1-6}
       L_a\psi_\beta(x) &\leq L_a\psi(x)+\frac{\epsilon}{4} \nonumber\\
       &\leq f(x, u(x), D_s^p \psi(x), D_{a,t}^q \psi(x))-\frac{3\epsilon}{4}\nonumber\\
       & \leq f(x, u(x), D_s^p \psi_\beta(x),D_{a,t}^q \psi(x))-\frac{5\epsilon}{8}\nonumber\\
       & \leq f(x, u(x), D_s^p \psi_\beta(x),D_{a,t}^q \psi_\beta(x))-\frac{\epsilon}{2}\nonumber\\
       &<f(x, u(x), D_s^p \psi_\beta(x),D_{a,t}^q \psi_\beta(x)),
    \end{align}
    for all $x \in B_{\mu'}(x_0)$. Let $\eta\in C_c^\infty(B_{\mu'}(x_0))$ with $\eta\geq 0$. Since $\psi_\beta\in C^2(B_{\mu'}(x_0))$, multiplying \eqref{D-T1-6} by $\eta$ and integrating, we get
    \begin{equation}\label{D-T1-6-1}
        \int_{B_{\mu'}(x_0)}L_a\psi_\beta(x)\eta(x)dx=\int_{B_{\mu'}(x_0)}f(x, u(x), D_s^p \psi_\beta(x),D_{a,t}^q \psi_\beta(x))\eta(x)dx.
    \end{equation}
    From the definition of $L_a\psi_\beta$ we have
    \begin{align}\label{D-T1-6-2}
        \int_{\mathbb{R}^N}L_a \psi_\beta(x)\eta(x)dx= 2&\int_{\mathbb{R}^N} \Bigg(\int_{\mathbb{R}^N}h_p(\psi_\beta(x)-\psi_\beta(y))K_{s,p}(x,y)\nonumber\\
        &+a(x,y)h_q(\psi_\beta(x)-\psi_\beta(y))K_{t,q}(x,y)dy\Bigg)\eta(x)dx.
    \end{align}
    Applying the change of variable in \eqref{D-T1-6-2}, we also deduce
    \begin{align}\label{D-T1-6-3}
        \int_{\mathbb{R}^N}L_a \psi_\beta(x)\eta(x)dx = -2&\int_{\mathbb{R}^N} \Bigg(\int_{\mathbb{R}^N}h_p(\psi_\beta(x)-\psi_\beta(y))K_{s,p}(x,y)\nonumber\\
        &+a(x,y)h_q(\psi_\beta(x)-\psi_\beta(y))K_{t,q}(x,y)dy\Bigg)\eta(y)dx.
    \end{align}
    Combining \eqref{D-T1-6-2} and \eqref{D-T1-6-3}, we obtain
    \begin{equation}\label{D-T1-6-4}
        \int_{\mathbb{R}^N}L_a \psi_\beta(x)\eta(x)dx=H_a(\psi_\beta,\eta).
    \end{equation}
    Hence, from \eqref{D-T1-6-1} and \eqref{D-T1-6-4}, we get 
    $$H_a(\psi_\beta,\eta)=f(x,u(x),D_s^p v(x), D_{a,t}^q v(x))\eta(x)dx.$$
    Thus, $\psi_\beta$ is a weak subsolution to the problem 
    \begin{equation}\label{D-ball}
        L_a v(x)=f(x,u(x),D_s^p v(x), D_{a,t}^q v(x))
    \end{equation}
    in $B_{\mu'}(x_0)$. Clearly, $u$ is a weak supersolution to problem \eqref{D-ball} in $B_{\mu'}(x_0)$. Since Definition \ref{D-CP} holds, we get that $\psi_\beta \leq u$ a.e. in $ B_{\mu'}(x_0)$. By the continuity of $\psi_\beta$ and $u$ in $ B_{\mu'}(x_0)$, we get that $\psi_\beta \leq u$ in $ B_{\mu'}(x_0)$. Then $\psi_\beta(x_0)=u(x_0)+\beta>u(x_0)$, which is a contradiction. Hence, $u$ is a weak supersolution to problem \eqref{D}. This concludes the proof.
\hfill\qedsymbol{}

\section{Viscosity solutions are weak solutions}\label{D-S5}
\noindent In this section, we prove that all the bounded viscosity solutions to problem \eqref{D} are weak solutions. For each $\epsilon>0$, we utilize the infimum convolution $u_\epsilon$ of the viscosity solution $u$ and a function $f_\epsilon$ defined using $f$ and use a limiting argument to prove Theorem \ref{D-T2}. We begin with the following Lemma.  

\begin{lemma}\label{D-L2}
Let {$u\in L^\infty_{\text{loc}}(\mathbb{R}^N)$} be a weak solution to problem \eqref{D}, where $f$ satisfies the conditions (a) and (c) in Theorem \ref{D-T2}, and let $u_\epsilon$ be given by Definition \ref{D-IC} and $f_\epsilon$ is defined by
\begin{equation}\label{D-f_ep}
    f_\epsilon(x,t,\zeta,\eta):=\inf\limits_{y\in B_{r(\epsilon)(x)}}f(y,t,\zeta,\eta).
\end{equation}
Consider $v\in C_c^\infty(\Omega)$ with $K=\operatorname{supp}v$ and $v\geq0$. Assume that 
\begin{align}\label{D-L2-1'}
    \lim\limits_{\epsilon\rightarrow 0}\Bigg(\int_K\int_{\mathbb{R}^N}&\left|u_\epsilon(x)-u_\epsilon(y)-(u(x)-u(y))\right|^p K_{s,p}(x,y)dydx\nonumber\\
    &+ \int_K\int_{\mathbb{R}^N}a(x,y)\left|u_\epsilon(x)-u_\epsilon(y)-(u(x)-u(y))\right|^q K_{t,q}(x,y)dydx \Bigg)=0.
\end{align}
Then, the following {one} holds:
\begin{equation}
    \lim\limits_{\epsilon
    \rightarrow0}\int_{\Omega}f_\epsilon(x,u_\epsilon, D_s^p u_\epsilon, D_{a,t}^q u_\epsilon)v dx= \int_{\Omega}(f(x,u, D_s^p u, D_{a,t}^q u)v dx.
\end{equation}
\end{lemma}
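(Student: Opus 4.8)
The plan is to pass to the limit $\epsilon \to 0$ in each argument of $f_\epsilon(x, u_\epsilon, D_s^p u_\epsilon, D_{a,t}^q u_\epsilon)$ separately, then invoke a dominated–convergence argument on the compact set $K = \operatorname{supp} v$. First I would record the basic convergences: since $u \in L^\infty_{\mathrm{loc}}(\mathbb{R}^N)$ is lower semicontinuous (being a weak solution, it is at least locally bounded and, after the usual considerations, one works with its l.s.c.\ representative), Lemma \ref{D-u_ep} gives $u_\epsilon \nearrow u$ pointwise with $u_\epsilon \to u$ uniformly on compact sets, and in particular $u_\epsilon \to u$ uniformly on $\overline{K}$. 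Next, the hypothesis \eqref{D-L2-1'} says precisely that the ``difference quotients'' $u_\epsilon(x) - u_\epsilon(y)$ converge to $u(x) - u(y)$ in $L^p(K \times \mathbb{R}^N, K_{s,p}\,dy\,dx)$ and, weighted by $a$, in $L^q(K \times \mathbb{R}^N, K_{t,q}\,dy\,dx)$. From this I would extract, along a subsequence, that for a.e.\ $x \in K$ one has $\int_{\mathbb{R}^N} |u_\epsilon(x)-u_\epsilon(y)-(u(x)-u(y))|^p K_{s,p}(x,y)\,dy \to 0$ and the analogous weighted $q$-statement; combined with the elementary inequality \eqref{D-L1-2} (or its $p$-analogue), this yields $D_s^p u_\epsilon(x) \to D_s^p u(x)$ and $D_{a,t}^q u_\epsilon(x) \to D_{a,t}^q u(x)$ for a.e.\ $x \in K$.

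For the $f_\epsilon$ piece, I would use condition (a) of Theorem \ref{D-T2}: $f$ is uniformly continuous on $\Omega \times \mathbb{R}^3$, so given $\delta>0$ there is a modulus $\omega$ with $|f(y,\cdot,\cdot,\cdot) - f(x,\cdot,\cdot,\cdot)| \le \omega(|x-y|)$ uniformly in the other variables; since $r(\epsilon) \to 0$, the infimum defining $f_\epsilon$ in \eqref{D-f_ep} satisfies $|f_\epsilon(x,t,\zeta,\eta) - f(x,t,\zeta,\eta)| \le \omega(r(\epsilon)) \to 0$ uniformly. Therefore, writing
\begin{align*}
    f_\epsilon(x, u_\epsilon, D_s^p u_\epsilon, D_{a,t}^q u_\epsilon) - f(x, u, D_s^p u, D_{a,t}^q u)
    &= \big(f_\epsilon - f\big)(x, u_\epsilon, D_s^p u_\epsilon, D_{a,t}^q u_\epsilon) \\
    &\quad + f(x, u_\epsilon, D_s^p u_\epsilon, D_{a,t}^q u_\epsilon) - f(x, u, D_s^p u, D_{a,t}^q u),
\end{align*}
the first term is controlled by $\omega(r(\epsilon))$, while the second tends to $0$ for a.e.\ $x \in K$ by the continuity of $f$ in $t$ together with the a.e.\ convergences $u_\epsilon \to u$, $D_s^p u_\epsilon \to D_s^p u$, $D_{a,t}^q u_\epsilon \to D_{a,t}^q u$ established above. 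Hence the integrand converges to $f(x,u,D_s^p u, D_{a,t}^q u)$ a.e.\ on $K$, and multiplying by the bounded $v$ preserves this.

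Finally, to justify interchanging the limit and the integral I would produce an $L^1(K)$–dominating function. Using condition (c) of Theorem \ref{D-T2}, namely \eqref{D-T2-f}, and monotonicity of $t \mapsto \gamma_i(|t|)$-type bounds on the compact range of the $u_\epsilon$ (all contained in a fixed bounded interval by the uniform convergence $u_\epsilon \to u$ on $\overline{K}$), we get
\[
    |f_\epsilon(x, u_\epsilon, D_s^p u_\epsilon, D_{a,t}^q u_\epsilon)|
    \le C\Big( |D_s^p u_\epsilon(x)|^{\frac{p-1}{p}} + |D_{a,t}^q u_\epsilon(x)|^{\frac{q-1}{q}} \Big) + \|h\|_{L^\infty(K)}
\]
with $C$ independent of $\epsilon$ (here I also use $|f_\epsilon| \le \sup_{B_{r(\epsilon)}} |f|$ and the uniform continuity to absorb the $\epsilon$-shift in $x$). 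Since $D_s^p u_\epsilon \to D_s^p u$ in $L^1(K)$ and $D_{a,t}^q u_\epsilon \to D_{a,t}^q u$ in $L^1(K)$ — which follow from \eqref{D-L2-1'} after another application of \eqref{D-L1-2} plus the fact that $D_s^p u, D_{a,t}^q u \in L^1(K)$ because $u \in W^{s,p}$ locally and $v$ has compact support — the powers $|D_s^p u_\epsilon|^{(p-1)/p}$ and $|D_{a,t}^q u_\epsilon|^{(q-1)/q}$ are equi-integrable on $K$ (Hölder with exponents $\frac{p}{p-1}$ and $\frac{q}{q-1}$ against the convergent $L^1$ data gives a uniform $L^{p/(p-1)}$, resp.\ $L^{q/(q-1)}$, bound, which dominates). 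A generalized dominated convergence theorem (Vitali's theorem, using equi-integrability) then yields
\[
    \lim_{\epsilon \to 0} \int_\Omega f_\epsilon(x, u_\epsilon, D_s^p u_\epsilon, D_{a,t}^q u_\epsilon)\, v\, dx
    = \int_\Omega f(x, u, D_s^p u, D_{a,t}^q u)\, v\, dx,
\]
and since a subsequence argument was used to obtain a.e.\ convergence, the uniqueness of the limit upgrades this to convergence of the full family. The main obstacle I anticipate is the equi-integrability / domination step: one must carefully exploit the sub-linear growth exponents $\frac{p-1}{p} < 1$ and $\frac{q-1}{q} < 1$ in \eqref{D-T2-f} against only $L^1$-convergence of the nonlocal gradients to rule out concentration, and keep track that the constants coming from the uniform continuity of $f$ and the boundedness of the $u_\epsilon$ do not depend on $\epsilon$.
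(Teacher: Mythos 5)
Your proof is essentially correct, but it takes a genuinely different route from the paper's. The paper splits the integrand into four explicit differences and handles each term separately: the $\zeta$- and $\eta$-terms via the \emph{Lipschitz} continuity of $f$ combined with the algebraic inequality of Lemma~3.4 in \cite{KKL2019} and H\"older's inequality against the hypothesis \eqref{D-L2-1'}; the $f - f_\epsilon$ term via uniform continuity; and the $t$-term via dominated convergence and the growth bound \eqref{D-T2-f}. You instead establish a.e.\ convergence (along a subsequence) of all four arguments of $f_\epsilon$ and then pass to the limit by Vitali's theorem, extracting equi-integrability of the dominating functions from the $L^1(K)$ convergence of $D_s^p u_\epsilon$, $D_{a,t}^q u_\epsilon$ and the sub-linear exponents $\tfrac{p-1}{p}, \tfrac{q-1}{q}$ in \eqref{D-T2-f}. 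Your argument is conceptually more uniform and in fact only exploits \emph{continuity} of $f$ in $(\zeta,\eta)$, not the full Lipschitz assumption, at the price of the sub-subsequence device needed to upgrade subsequential a.e.\ convergence to convergence of the full family; the paper's decomposition avoids subsequences entirely and gives a cleaner quantitative estimate but leans explicitly on the Lipschitz hypothesis. Both are complete and valid under the stated hypotheses.

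One small inaccuracy worth flagging: you invoke Lemma \ref{D-u_ep} to claim that $u_\epsilon \to u$ \emph{uniformly} on compact sets, and then use this to get a uniform bound on $\|u_\epsilon\|_{L^\infty(\overline K)}$. Uniform convergence would require $u$ to be continuous (Dini-type argument), but the lemma assumes only $u \in L^\infty_{\mathrm{loc}}(\mathbb{R}^N)$; what actually holds, and what the paper uses via Lemma~3.1(ii) of \cite{BM2021}, is pointwise (a.e.) convergence $u_\epsilon \nearrow u$ together with the two-sided estimate $\inf_{\mathbb{R}^N} u \le u_\epsilon(x) \le u(x)$, which already gives the uniform $L^\infty$-bound you need on $\overline K$. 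So the conclusion you draw from it is fine, but the justification should be corrected.
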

\begin{proof}
    Let $\delta>0$. For simplicity, we define for every $x,y \in \mathbb{R}^N$ and $\epsilon>0$,
    $$U(x,y):=u(x)-u(y), \ \ U_\epsilon(x,y):=u_\epsilon(x)-u_\epsilon(y)$$
    We have
    \begin{align}\label{D-L2-1}
        \bigg|\int_{\Omega}\bigg[(f(x,u, D_s^p u,& D_{a,t}^q u)-f_\epsilon(x,u_\epsilon, D_s^p u_\epsilon, D_{a,t}^q u_\epsilon)\bigg]v dx  \bigg| \nonumber\\
        &\leq \int_{K} \left|f(x,u, D_s^p u, D_{a,t}^q u)-f(x,u_\epsilon, D_s^p u, D_{a,t}^q u) \right|v dx \nonumber\\
        &\ \ \ + \int_{K} \left|f(x,u_\epsilon, D_s^p u, D_{a,t}^q u)-f(x,u_\epsilon, D_s^p u_\epsilon, D_{a,t}^q u) \right|v dx \nonumber\\
        &\ \ \ + \int_{K} \left|f(x,u_\epsilon, D_s^p u_\epsilon, D_{a,t}^q u)-f(x,u_\epsilon, D_s^p u_\epsilon, D_{a,t}^q u_\epsilon) \right|v dx \nonumber\\
        &\ \ \ + \int_{K} \left|f(x,u_\epsilon, D_s^p u_\epsilon, D_{a,t}^q u_\epsilon)-f_\epsilon(x,u_\epsilon, D_s^p u_\epsilon, D_{a,t}^q u_\epsilon) \right|v dx.
    \end{align}
    Now, since $f=f(x,t,\zeta,\eta)$ is Lipschitz continuous in $\eta$, there exist a constant $C>0$ such that for every $\epsilon>0$,
    \begin{align}\label{D-L2-2}
        \int_{K} \bigg|&f(x,u_\epsilon, D_s^p u_\epsilon, D_{a,t}^q u)-f(x,u_\epsilon, D_s^p u_\epsilon, D_{a,t}^q u_\epsilon) \bigg|v dx \nonumber\\
        &\leq C  \int_{K} |D_{a,t}^q u-D_{a,t}^q u_\epsilon|v dx\nonumber\\
        &=C(f,\Lambda_2)\int_K \left|\int_{\mathbb{R}^N} a(x,y)(|u(x)-u(y)|^q-|u_\epsilon(x)-u_\epsilon(y)|^q)K_{t,q}(x,y)dy \right|v(x)dx.
    \end{align} 
    Using Lemma 3.4 in \cite{KKL2019}, we have
    \begin{equation}\label{D-L2-3}
       \bigg||a|^q -|b|^q\bigg|\leq C(q)(|b|+|a-b|)^{q-1}|a-b|, a,b \in \mathbb{R}.
    \end{equation}
    Applying \eqref{D-L2-3} for $a=U_\epsilon(x,y)$, $b=U(x,y)$,  and using H\"older's inequality, we deduce
    \begin{align}\label{D-L2-4}
        \bigg|\int_{\mathbb{R}^N}& a(x,y)(|u(x)-u(y)|^q-|u_\epsilon(x)-u_\epsilon(y)|^q)K_{t,q}(x,y)dy \bigg|\nonumber\\
        &\leq C\int_{\mathbb{R}^N}a(x,y)\left(|U(x,y)|+|U_\epsilon(x,y)-U(x,y)|\right)^{q-1}|U_\epsilon(x,y)-U(x,y)|K_{t,q}(x,y)dy\nonumber\\
        &\leq C\bigg(\int_{\mathbb{R}^N} a(x,y)\big(|U(x,y)|+|U_\epsilon(x,y)-U(x,y)|\big)^{q}K_{t,q}(x,y)dy\bigg)^{\frac{q-1}{q}}\nonumber\\
        & \ \ \ \times \bigg(\int_{\mathbb{R}^N} a(x,y)|U_\epsilon(x,y)-U(x,y)|^{q}K_{t,q}(x,y)dy\bigg)^{\frac{1}{q}}\nonumber\\
        &\leq C\bigg(\int_{\mathbb{R}^N} a(x,y)\big(|U(x,y)|^{q}+|U_\epsilon(x,y)-U(x,y)|^{q}\big)K_{t,q}(x,y)dy\bigg)^{\frac{q-1}{q}}\nonumber\\
        & \ \ \ \times \bigg(\int_{\mathbb{R}^N} a(x,y)|U_\epsilon(x,y)-U(x,y)|^{q}K_{t,q}(x,y)dy\bigg)^{\frac{1}{q}}.
    \end{align}
    Since $v\in C_c^\infty(\Omega)$, substituting \eqref{D-L2-4} in \eqref{D-L2-2}, applying {H\"older's} inequality and using \eqref{D-L2-1'}, we get an $\epsilon_0>0$ such that for all $0<\epsilon<\epsilon_0$, 
    \begin{align}\label{D-L2-6}
        \int_{K} \bigg|f&(x,u_\epsilon, D_s^p u_\epsilon, D_{a,t}^q u)-f(x,u_\epsilon, D_s^p u_\epsilon, D_{a,t}^q u_\epsilon) \bigg|v dx\nonumber\\
        &\leq C\|v\|_{L^\infty(\Omega)}\left(\int_{K}\int_{\mathbb{R}^N} a(x,y)\big(|U(x,y)|^{q}+|U_\epsilon(x,y)-U(x,y)|^{q}\big) K_{t,q}(x,y)dydx\right)^{\frac{q-1}{q}} \nonumber\\
        & \ \ \ \ \ \left(\int_{K}\int_{\mathbb{R}^N} a(x,y)|U_\epsilon(x,y)-U(x,y)|^{q}K_{t,q}(x,y)dydx\right)^{\frac{1}{q}} \nonumber\\
        &\leq \frac{\delta}{4}.
    \end{align}
    Using a similar approach, we obtain an $\epsilon_1>0$ such that for all $0<\epsilon<\epsilon_1$, 
    \begin{equation}\label{D-L2-7}
        \int_{K} \left|f(x,u_\epsilon, D_s^p u, D_{a,t}^q u)-f(x,u_\epsilon, D_s^p u_\epsilon, D_{a,t}^q u) \right|v dx\leq \frac{\delta}{4}.
    \end{equation}
     Since $f(x,t,\zeta,\eta)$ is uniformly continuous in $\Omega \times \mathbb{R}^3$, there {exists} $r>0$ such that for every $\epsilon>0$, we have
    \begin{equation}\label{D-L2-8}
        \left|f(x,u_\epsilon, D_s^p u, D_{a,t}^q u)-f(y,u_\epsilon, D_s^p u, D_{a,t}^q u) \right|\leq \frac{\delta}{4|K|\|v\|_{L^{\infty}(K)}}, \text{ whenever } |x-y|<r.
    \end{equation}
    Then, by Lemma \ref{D-u_ep}, we choose $\epsilon_2>0$ sufficiently small such that for all $0<\epsilon<\epsilon_2$, we have $r(\epsilon)<r$. Then, by \eqref{D-f_ep} and \eqref{D-L2-8}, we have
    \begin{equation*}
        |f(x,u_\epsilon, D_s^p u_\epsilon, D_{a,t}^q u_\epsilon)-f_\epsilon(x,u_\epsilon, D_s^p u_\epsilon, D_{a,t}^q u_\epsilon)|< \frac{\delta}{4|K|\|v\|_{L^{\infty}(K)}}, \ x\in K.
    \end{equation*}
    Therefore, \eqref{D-L2-7} becomes
    \begin{equation}\label{D-L2-9}
        \int_{K} \left|f(x,u_\epsilon, D_s^p u_\epsilon, D_{a,t}^q u_\epsilon)-f_\epsilon(x,u_\epsilon, D_s^p u_\epsilon, D_{a,t}^q u_\epsilon) \right|v dx <\frac{\delta}{4}.
    \end{equation}
    Choose $m>0$ such that $\|u_\epsilon\|_{L^\infty(K)}\leq m$ for all $\epsilon<1$. Now, by Lemma 3.1(ii) in \cite{BM2021} and the continuity of $f$, we have $|f(x,u, D_s^p u, D_{a,t}^q u)-f(x,u_\epsilon, D_s^p u, D_{a,t}^q u)|\rightarrow 0$ pointwise a.e. in $K$. Also, by \eqref{D-T2-f} and since $\gamma_i$ are bounded in $[-m,m]$, we get a $C>0$ such that whenever $0<\epsilon<1$, 
    \begin{align*}
        \left|f(x,u, D_s^p u, D_{a,t}^q u)-f(x,u_\epsilon, D_s^p u, D_{a,t}^q u) \right|v \leq \left(C(|D_s^p u|^{\frac{p-1}{p}}+|D_{a,t}^q u|^{\frac{q-1}{1}})+h(x) \right)v,
    \end{align*}
    which is integrable on $K$. Therefore, by the dominated convergence theorem, we obtain $0<\epsilon_3<1$ such that for all $0<\epsilon<\epsilon_3$,
    \begin{equation}\label{D-L2-10}
        \int_{K} \left|f(x,u, D_s^p u, D_{a,t}^q u)-f(x,u_\epsilon, D_s^p u, D_{a,t}^q u) \right|v dx <\frac{\delta}{4}.
    \end{equation}
    Choose $\epsilon'=\min\{\epsilon_i:i=0,1,2,3\}$. Then, for all $\epsilon<\epsilon'$, using \eqref{D-L2-1}, \eqref{D-L2-6}, \eqref{D-L2-7}, \eqref{D-L2-9} and \eqref{D-L2-10}, we have
    \begin{equation*}
         \left|\int_{\Omega}\Big[f(x,u, D_s^p u, D_{a,t}^q u)-f_\epsilon(x,u_\epsilon, D_s^p u_\epsilon, D_{a,t}^q u_\epsilon)\Big]v dx  \right|<\delta \text{ for all } \epsilon<\epsilon'.
    \end{equation*}
    Since $\delta>0$ is arbitrary, this proves the desired result.
\end{proof}

\noindent The following lemma proves that each infimal convolution $u_\epsilon$ of the viscosity supersolution $u$ to problem \eqref{D} is a {viscosity supersolution} to problem \eqref{D} with $f$ replaced by the function $f_\epsilon$ given by \eqref{D-f_ep} on a subset of $\Omega$. 

\begin{lemma}\label{D-L3}
    Let $u$ be a viscosity supersolution to problem \eqref{D}, where $f=f(x,t,\zeta,\eta)$ is non-increasing in $t$ and continuous in $\Omega\times \mathbb{R}^3$. Then, the function $u_\epsilon$ given by Definition \ref{D-IC} is a viscosity supersolution to the problem 
   \begin{align}\label{D-L3-prob1}
    L_a v(x)=f_\epsilon(x,v,D_s^p u, D_{a,t}^q v) \text{ in } \Omega_{r(\epsilon)},
\end{align}
where $f_\epsilon$ is defined by \eqref{D-f_ep}.
\end{lemma}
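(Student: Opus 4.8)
\emph{Proof sketch.} The plan is the classical infimal-convolution argument: given a $C^2$ function $\psi$ touching $u_\epsilon$ from below at a point $x_0$, I would transport it to a function $\phi$ touching $u$ from below at the associated minimizing point $x_\epsilon$, then use the translation invariance of $L_a$, $D_s^p$ and $D_{a,t}^q$ to carry the resulting inequality back to $x_0$, and finally absorb the discrepancies between $u(x_\epsilon)$ and $u_\epsilon(x_0)$ and between the base points $x_\epsilon$ and $x_0$ by means of the monotonicity of $f$ in $t$ and the definition \eqref{D-f_ep} of $f_\epsilon$. First I would record the standard facts about $u_\epsilon$ (cf.\ Lemma~3.1 in \cite{BM2021}): since $u$ is bounded and lower semicontinuous, $u_\epsilon$ is bounded, locally Lipschitz continuous, $u_\epsilon\le u$ on $\mathbb{R}^N$, and for each $x_0$ the infimum in Definition \ref{D-IC} is attained at some $x_\epsilon=x_\epsilon(x_0)$ with $|x_0-x_\epsilon|\le r(\epsilon)$, $r(\epsilon)\to0$ (Lemma \ref{D-u_ep}). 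In particular $u_\epsilon$ fulfils conditions~1--3 of Definition \ref{D-VS} relative to problem \eqref{D-L3-prob1}: it is lower semicontinuous (indeed locally Lipschitz), its negative part lies in $L_{s,p}^{p-1}(\mathbb{R}^N)\cap L_{a,t,q}^{q-1}(\mathbb{R}^N)$ by boundedness, and it is finite everywhere.

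Next, fix a ball $B_r(x_0)\subset\Omega_{r(\epsilon)}$ and an admissible test function $\psi\in L_{s,p}^{p-1}(\mathbb{R}^N)\cap L_{a,t,q}^{q-1}(\mathbb{R}^N)\cap C^2(B_r(x_0))$ with $\psi\le u_\epsilon$, $\psi(x_0)=u_\epsilon(x_0)$, satisfying alternative~4(a) or 4(b) of Definition \ref{D-VS}; I must produce the corresponding viscosity inequality for \eqref{D-L3-prob1} at $x_0$. Put $z:=x_\epsilon-x_0$ and define $\phi(y):=\psi(y-z)-|z|^l/(l\epsilon^{l-1})$. Choosing the competitor $w=y+z$ in Definition \ref{D-IC} gives $u_\epsilon(y)\le u(y+z)+|z|^l/(l\epsilon^{l-1})$ for every $y$, whence $\phi\le u$ on $\mathbb{R}^N$; and $\phi(x_\epsilon)=\psi(x_0)-|z|^l/(l\epsilon^{l-1})=u_\epsilon(x_0)-|z|^l/(l\epsilon^{l-1})=u(x_\epsilon)$, so $\phi$ touches $u$ from below at $x_\epsilon$. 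I would then check that $\phi$ is admissible for $u$ at $x_\epsilon$: it is $C^2$ on $B_r(x_\epsilon)$, and $B_r(x_\epsilon)\subset B_{r+r(\epsilon)}(x_0)\subset\Omega$ because $B_r(x_0)\subset\Omega_{r(\epsilon)}$ — this is precisely why \eqref{D-L3-prob1} is posed on the shrunken domain $\Omega_{r(\epsilon)}$; being a translate of $\psi$ minus a constant, $\phi$ lies in $L_{s,p}^{p-1}(\mathbb{R}^N)\cap L_{a,t,q}^{q-1}(\mathbb{R}^N)$ (comparing $(1+|y|)$ with $(1+|y-z|)$ for the bounded shift $|z|\le r(\epsilon)$) and $L_a\phi(x_\epsilon)$ is well defined (Lemma~4.5 in \cite{FZ2023}); and since $\nabla\phi(\cdot)=\nabla\psi(\cdot-z)$, $D^2\phi(\cdot)=D^2\psi(\cdot-z)$ and $N_\phi=N_\psi+z$, $\phi$ inherits at $x_\epsilon$ whichever of 4(a), 4(b) holds for $\psi$ at $x_0$, including the isolated-critical-point and $C_\beta^2$ requirements in the degenerate range $p\le\tfrac{2}{2-s}$.

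Since $u$ is a viscosity supersolution to \eqref{D} and $\phi$ is admissible,
\[
L_a\phi(x_\epsilon)\ \ge\ f\big(x_\epsilon,\,u(x_\epsilon),\,D_s^p\phi(x_\epsilon),\,D_{a,t}^q\phi(x_\epsilon)\big).
\]
By the translation invariance of $K_{s,p}$, $K_{t,q}$ and $a$ (our standing assumptions), the substitution $w=y-z$ yields $L_a\phi(x_\epsilon)=L_a\psi(x_0)$, $D_s^p\phi(x_\epsilon)=D_s^p\psi(x_0)$ and $D_{a,t}^q\phi(x_\epsilon)=D_{a,t}^q\psi(x_0)$. Moreover $u(x_\epsilon)=u_\epsilon(x_0)-|z|^l/(l\epsilon^{l-1})\le u_\epsilon(x_0)$ and $f$ is non-increasing in $t$, while $x_\epsilon\in\overline{B_{r(\epsilon)}(x_0)}$ together with the continuity of $f$ and \eqref{D-f_ep} gives $f(x_\epsilon,\cdot,\cdot,\cdot)\ge f_\epsilon(x_0,\cdot,\cdot,\cdot)$. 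Chaining these inequalities,
\[
L_a\psi(x_0)\ \ge\ f_\epsilon\big(x_0,\,u_\epsilon(x_0),\,D_s^p\psi(x_0),\,D_{a,t}^q\psi(x_0)\big),
\]
which (with the $\zeta$-slot fed by the test function, exactly as in Definition \ref{D-VS}) is precisely the defining viscosity supersolution inequality for \eqref{D-L3-prob1} at $x_0$. As $x_0$, $B_r(x_0)$ and $\psi$ were arbitrary, $u_\epsilon$ is a viscosity supersolution to \eqref{D-L3-prob1} in $\Omega_{r(\epsilon)}$.

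\emph{Main obstacle.} The only genuinely delicate point is the admissibility check for $\phi$ in the second paragraph: that $B_r(x_\epsilon)$ still lies inside $\Omega$ (this is what forces the domain $\Omega_{r(\epsilon)}$), that the two tail integrals of $\phi$ converge, and, above all, that the structural alternative~4(b) of Definition \ref{D-VS} (isolated critical point together with $\psi\in C_\beta^2$ for some $\beta>\tfrac{sp}{p-1}$) is stable under the translation $y\mapsto y-z$, so that $\phi$ is a legitimate comparison function for $u$ at $x_\epsilon$. Everything else reduces to translation invariance of the operator and elementary monotonicity bookkeeping.
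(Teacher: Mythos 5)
Your proof is correct and follows essentially the same idea as the paper's. The paper organizes the argument in two stages: it first shows that every translate $w_a(x)=u(x+a)+|a|^l/(l\epsilon^{l-1})$ with $a\in B_{r(\epsilon)}(0)$ is a viscosity supersolution to \eqref{D-L3-prob1} (by transporting a test function for $w_a$ at $x_0$ into one for $u$ at $x_0+a$, using translation invariance, monotonicity in $t$, and the definition of $f_\epsilon$), and then picks the minimizing shift $z$ from Lemma~3.1(v) in \cite{BM2021} so that $\psi\le u_\epsilon\le w_z$ with equality at $x_0$, handing the test function off to $w_z$. You merge these two stages: you take $z=x_\epsilon-x_0$ directly, transport $\psi$ to $\phi(y)=\psi(y-z)-|z|^l/(l\epsilon^{l-1})$, and check $\phi\le u$ with $\phi(x_\epsilon)=u(x_\epsilon)$ in one step. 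The key ingredients — translation invariance of $L_a$, $D_s^p$, $D_{a,t}^q$; the drop $u(x_\epsilon)\le u_\epsilon(x_0)$ together with monotonicity of $f$ in $t$; the $\inf$ in \eqref{D-f_ep}; and the stability of conditions 4(a)/4(b) and of the tail integrability under bounded translation — are identical, and your more direct bookkeeping is sound.
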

\begin{proof}
    Let $a\in B_{r(\epsilon)}(0)$. Define the function $w_a:\mathbb{R}^N \rightarrow\mathbb{R}$ by
    $$w_a(x)=u(x+a)+\frac{|a|^l}{l\epsilon^{l-1}}.$$
    Let $B_r(x_0)\subset \Omega_{r(\epsilon)}$. Now consider any function $\psi_0 \in  L_{s,p}^{p-1}(\mathbb{R}^N) \cap L_{a,t,q}^{q-1}(\mathbb{R}^N) \cap C^2(B_r(x_0))$ satisfying $\psi_0 \leq w_a$ in $\mathbb{R}^N, \ \psi_0(x_0)=w_a(x_0)$ and either
    \begin{itemize}
        \item $p>\frac{2}{2-s}$ or $\nabla \psi_0(x_0)\neq 0$, or
        \item $p\leq \frac{2}{2-s}$, $x_0$ is an isolated critical point of $\psi_0$ and $\psi_0 \in C_\beta^2(B_r(x_0))$ for a $\beta>\frac{sp}{p-1}$.
    \end{itemize}
    Now, define $\psi_a:B_r(x_0)\rightarrow\mathbb{R}$ by
    $$\psi_a(y)=\psi_0(y-a)-\frac{|a|^l}{l\epsilon^{l-1}}.$$
    For $y\in \mathbb{R}^N$, let $x=y-a$ and let $y_0=x_0+a \in \Omega$. Then, 
    \begin{align*}
        \psi_a(y)=\psi_0(y-a)-\frac{|a|^l}{l\epsilon^{l-1}}\leq w_a(x)-\frac{|a|^l}{l\epsilon^{l-1}}=u(y).
    \end{align*}
    Similarly, $\psi_a(y_0)=u(y_0)$. Since $u$ is a viscosity supersolution to \eqref{D}, we have
    \begin{align*}
        L_a \psi_0(x_0)=L_a\psi_a(y_0)&\geq f(y_0, \psi_a(y_0),D_s^p \psi_a(y_0), D_{a,t}^q \psi_a(y_0))\nonumber\\
        &=f (x_0+a, \psi_0(x_0)-\frac{|a|^l}{l\epsilon^{l-1}},D_s^p \psi_0(x_0), D_{a,t}^q \psi_0(x_0)) \nonumber\\
        &\geq f (x_0+a, \psi_0(x_0),D_s^p \psi_0(x_0), D_{a,t}^q \psi_0(x_0)) \nonumber\\
        &\geq f_\epsilon(x_0, \psi_0(x_0),D_s^p \psi_0(x_0), D_{a,t}^q \psi_0(x_0)).
    \end{align*}
    Hence, $w_a$ is a viscosity supersolution to \eqref{D-L3-prob1}. Now, by Lemma 3.1(v) in \cite{BM2021}, there {exists} $z\in B_{r(\epsilon)}(0)$ such that $u_\epsilon(x_0)=w_z(x_0)$. Clearly, $u_\epsilon\leq w_z$. Now, to prove that $u_\epsilon$ is a viscosity supersolution, let $B_r(x_0)\subset \Omega_{r(\epsilon)}$ and consider a function $\psi \in  L_{s,p}^{p-1}(\mathbb{R}^N) \cap L_{a,t,q}^{q-1}(\mathbb{R}^N) \cap C^2(B_r(x_0))$ such that $\psi \leq u_\epsilon,\ \psi(x_0)=u_\epsilon(x_0)$ and either one of the following two conditions are satisfied.
    \begin{itemize}
        \item $p>\frac{2}{2-s}$ or $\nabla \psi(x_0)\neq 0$, or
        \item $p\leq \frac{2}{2-s}$, $x_0$ is an isolated critical point of $\psi$ and $\psi \in C_\beta^2(B_r(x_0))$ for a $\beta>\frac{sp}{p-1}$.
    \end{itemize}
    Then, $\psi \leq u_\epsilon \leq w_z$. Since $w_z$ is a viscosity supersolution to \eqref{D-L3-prob1}, we deduce 
    $$L_a\psi(x_0)\geq f_\epsilon(x_0, \psi(x_0),D_s^p \psi(x_0), D_{a,t}^q \psi(x_0)).$$
    Hence, the proof is complete.
\end{proof}

\noindent In the next {lemma}, we establish an inequality for the weak supersolutions of problem \eqref{D} which is vital to prove Theorem \ref{D-T2}.

\begin{lemma}\label{D-L4}
    Let $u$ be a bounded weak supersolution to {problem} \eqref{D}, where $f$ is a continuous function such that \eqref{D-T2-f} holds. For $\phi \in C_c^\infty(\Omega)$ with $\operatorname{supp}\phi=K$ and $0\leq \phi\leq 1$, the following inequality is satisfied: 
    \begin{align*}
        \int_K \int_{\mathbb{R}^N}&|u(x)-u(y)|^p K_{s,p}(x,y)\phi(x)^qdydx+\int_K \int_{\mathbb{R}^N} a(x,y) |u(x)-u(y)|^q K_{t,q}(x,y)\phi(x)^{q}dydx \\
       &\leq C\max\{(\operatorname{osc}u)^p,(\operatorname{osc}u)^q,\operatorname{osc}u\}\bigg(\iint_{Q(K)} |\phi(x)-\phi(y)|^p K_{s,p}(x,y)dydx\\
       &\ \ \ +\iint_{Q(K)}a(x,y) |\phi(x)-\phi(y)|^q K_{t,q}(x,y)dydx+1\bigg),
    \end{align*}
    where $C=C(p,q,K,\gamma_1, \gamma_2,h,\Lambda_1,\Lambda_2)>0$ and $\operatorname{osc}u=\sup\limits_{\mathbb{R}^N}u-\inf\limits_{\mathbb{R}^N}u$. 
\end{lemma}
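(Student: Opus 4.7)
The plan is to derive this Caccioppoli-type estimate by testing the weak supersolution inequality \eqref{D-WSE} against $v = (M - u)\phi^q$, where $M := \sup_{\mathbb{R}^N} u < \infty$ since $u$ is bounded. This makes $v \geq 0$, and although $v$ is not smooth, it lies in the appropriate Sobolev-tail class with $\operatorname{supp} v \subset K \subset\subset \Omega$, so a standard density argument justifies its use as a test function. The key algebraic manipulation is the splitting
\[
v(x) - v(y) = (M - u(x))(\phi(x)^q - \phi(y)^q) - (u(x) - u(y))\phi(y)^q.
\]
Substituting into $H_a(u,v)$, exploiting the symmetry of both kernels to replace $\phi(y)^q$ by $\phi(x)^q$ in the diagonal terms, and rearranging, I obtain, writing $A = u(x) - u(y)$,
\begin{align*}
&\iint |A|^p \phi(x)^q K_{s,p}\,dydx + \iint a(x,y)|A|^q \phi(x)^q K_{t,q}\,dydx \\
&\quad\leq \iint h_p(A)(M-u(x))(\phi(x)^q-\phi(y)^q) K_{s,p}\,dydx \\
&\qquad + \iint a(x,y) h_q(A)(M-u(x))(\phi(x)^q-\phi(y)^q)K_{t,q}\,dydx - \int_\Omega fv \, dx.
\end{align*}

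To control the two cross-terms, I use $|M-u(x)| \leq \operatorname{osc} u$ together with the elementary bound $|\phi(x)^q - \phi(y)^q| \leq q \max\{\phi(x),\phi(y)\}^{q-1} |\phi(x) - \phi(y)|$. Young's inequality with exponents $p/(p-1)$ and $p$ then yields
\[
|A|^{p-1} \max\{\phi(x),\phi(y)\}^{q-1}|\phi(x)-\phi(y)| \leq \varepsilon|A|^p \max\{\phi(x),\phi(y)\}^q + C_\varepsilon |\phi(x)-\phi(y)|^p,
\]
where the hypothesis $q \geq p$ and $\phi \leq 1$ are used to absorb the leftover $\phi$-power. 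The symmetry of $K_{s,p}$ gives $\iint |A|^p \max\{\phi(x),\phi(y)\}^q K_{s,p} \leq 2 \iint |A|^p \phi(x)^q K_{s,p}$, so choosing $\varepsilon\sim 1/\operatorname{osc} u$ absorbs half of the left-hand side and leaves a remainder of the form $C(\operatorname{osc} u)^p \iint_{Q(K)} |\phi(x)-\phi(y)|^p K_{s,p}$. An analogous application with exponents $q/(q-1)$ and $q$ on the $q$-cross-term produces the $(\operatorname{osc} u)^q$ term involving $a(x,y)K_{t,q}$; the restriction to $Q(K)$ comes for free since $|\phi(x)-\phi(y)|$ vanishes when both $x,y\notin K$.

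For the remaining term $-\int fv\,dx$, I exploit the growth condition \eqref{D-T2-f} together with $|v|\leq (\operatorname{osc} u)\phi^q$; boundedness of $u$ and continuity of $\gamma_i$ keep $\gamma_i(|u|)$ bounded on the support. Applying Young's in the form $\phi^q |D_s^p u|^{(p-1)/p} \leq \varepsilon \phi^q D_s^p u + C_\varepsilon \phi^q$, combined with $|x-y|^{-N-sp}\leq \Lambda_1 K_{s,p}$, makes the $D_s^p u$-contribution absorbable into the left-hand side and produces a residue proportional to $(\operatorname{osc} u)^p |K|$; the $D_{a,t}^q u$-term analogously gives $(\operatorname{osc} u)^q |K|$, while the $h(x)$-term yields $\operatorname{osc} u \cdot \|h\|_{L^\infty(K)}|K|$. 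These three residues explain precisely the maximum over $(\operatorname{osc} u)^p$, $(\operatorname{osc} u)^q$ and $\operatorname{osc} u$ multiplying the $+1$ in the conclusion. The main obstacle is the careful tracking of constants through Young's inequality so that the various $\operatorname{osc} u$-powers come out exactly right, coupled with the kernel-symmetry step needed to convert the natural $\max\{\phi(x),\phi(y)\}^q$ into the asymmetric $\phi(x)^q$ that actually appears on the left-hand side of the claim.
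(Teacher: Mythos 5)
Your proposal is correct and is essentially the same proof as the paper's: the paper also tests against $v(x)=\phi(x)^q(A-u(x))$ with $A=\sup_{\mathbb{R}^N}u$, uses the identical algebraic splitting of $v(x)-v(y)$ (yours groups $(M-u(x))$ with the cross term and $\phi(y)^q$ with the diagonal term, the paper symmetrically groups $(A-u(y))$ with the cross term and $\phi(x)^q$ with the diagonal term, but the two are related by swapping $x$ and $y$), estimates $|\phi(x)^q-\phi(y)^q|$ via the same power-difference bound, applies Young's inequality with exponents $p/(p-1),p$ and $q/(q-1),q$ together with $q\ge p$ and $0\le\phi\le1$ to absorb the cross terms, and treats the $f$-term with the growth bound \eqref{D-T2-f}, boundedness of $\gamma_i$, and a further Young step. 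The only superficial difference is bookkeeping — you scale the Young parameter as $\varepsilon\sim 1/\operatorname{osc}u$, whereas the paper places $B=\operatorname{osc}u$ inside the Young pair and keeps $\varepsilon$ universal — and both variants produce the same $(\operatorname{osc}u)^p$, $(\operatorname{osc}u)^q$ and $\operatorname{osc}u$ contributions.
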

\begin{proof}
    Given $\phi \in C_c^\infty(\Omega)$ satisfying $0\leq \phi\leq 1$ with support $K$. Define, the function $v$ as
    \begin{equation*}
        v(x):=\begin{cases}
            \phi(x)^q\left(A -u(x)\right),& x\in \Omega,\\
            0, & x \notin \Omega,
        \end{cases}
    \end{equation*}
    where $A=\sup\limits_{\mathbb{R}^N} u$. Then, we have
    \begin{equation}\label{D-L4-1}
        v(x)-v(y)=-(u(x)-u(y))\phi(x)^q+(\phi(x)^q-\phi(y)^q)(A-u(y)).
    \end{equation}
    Since $u$ is a weak supersolution to problem \eqref{D} and $v \geq 0$, we obtain
    \begin{align}\label{D-L4-2}
        H_a(u,v)\geq \int_K f(x,v,D_s^p u, D_{a,t}^q u)v(x)dx.
    \end{align}
    Substituting \eqref{D-L4-1} in \eqref{D-L4-2}, we deduce
    \begin{align}\label{D-L4-3}
        \int_K \int_{\mathbb{R}^N} |u(x)-u(y)|^p &K_{s,p}(x,y)\phi(x)^qdydx+\int_K \int_{\mathbb{R}^N} a(x,y) |u(x)-u(y)|^q K_{t,q}(x,y)\phi(x)^qdydx \nonumber\\
        &\leq \int_{\mathbb{R}^N}\int_{\mathbb{R}^N} h_p(u(x)-u(y))(\phi(x)^q-\phi(y)^q)K_{s,p}(x,y)B dydx\nonumber\\
        &\ \ \ +\int_{\mathbb{R}^N}\int_{\mathbb{R}^N} a(x,y)h_q(u(x)-u(y))(\phi(x)^q-\phi(y)^q)K_{t,q}(x,y)B dydx\nonumber\\
        &\ \ \ +\int_K |f(x,v,D_s^p u, D_{a,t}^q u)|v(x)dx,
    \end{align}
    where $B=\operatorname{osc}u$. Observe that using Lemma 3.4 in \cite{KKL2019}, for $a,b \in \mathbb{R}$,
    \begin{equation}\label{D-L4-5}
       \left||a|^l -|b|^l\right|\leq C(l)(|a|^{l-1}+|b|^{l-1})|a-b|, \ l>1.
    \end{equation}
    Now for $\epsilon>0$, we have the Young's inequality
    \begin{equation}\label{D-L4-4}
        ab\leq \epsilon a^{l} +C(\epsilon,l)b^{\frac{l}{l-1}}, \ a,b\geq 0,\ l>1.
    \end{equation}
    Applying \eqref{D-L4-4} with $a=|h_q(u(x)-u(y))|\phi(x)^{q-1}$, $b=|\phi(x)-\phi(y)|B$, and $l=\frac{q}{q-1}$, we get
    \begin{equation}\label{D-L4-6}
        h_q(u(x)-u(y))\phi(x)^{q-1}|\phi(x)-\phi(y)|B \leq \epsilon|u(x)-u(y)|^q\phi(x)^q+C(\epsilon,q)|\phi(x)-\phi(y)|^q B^q.
    \end{equation}
    Similarly, we obtain
    \begin{equation}\label{D-L4-7}
         h_q(u(x)-u(y))\phi(y)^{q-1}|\phi(x)-\phi(y)|B \leq \epsilon|u(x)-u(y)|^q\phi(y)^q+C(\epsilon,q)|\phi(x)-\phi(y)|^q B^q.
    \end{equation}
    Using \eqref{D-L4-5} with $l=q,\ a=\phi(x),\ b=\phi(y)$, and applying \eqref{D-L4-6}, \eqref{D-L4-7}, we deduce
    \begin{align}\label{D-L4-8}
        h_q(u(x)-u(y))(\phi(x)^q-\phi(y)^q)B &\leq C(q)\epsilon|u(x)-u(y)|^q(\phi(x)^q+\phi(y)^q)\nonumber\\
        &\ \ \ +C(\epsilon,q)|\phi(x)-\phi(y)|^q B^q.
    \end{align}
    Using the symmetry of functions in the RHS of \eqref{D-L4-8}, we get
    \begin{align}\label{D-L4-8'}
        \int_{\mathbb{R}^N}\int_{\mathbb{R}^N} &a(x,y)h_q(u(x)-u(y))(\phi(x)^q-\phi(y)^q)K_{t,q}(x,y)B dydx \nonumber\\
        &\leq C(q)\epsilon \int_K\int_{\mathbb{R}^N}a(x,y)|u(x)-u(y)|^q\phi(x)^qK_{t,q}(x,y)dydx\nonumber\\
        & \ \ \ +C(q,\epsilon)B^q\iint_{Q(K)}a(x,y)|\phi(x)-\phi(y)|^q K_{t,q}(x,y)dydx.
    \end{align}
    Note that $0\leq \phi \leq 1$ and $q\leq \frac{p(q-1)}{(p-1)}$. Thus, we use \eqref{D-L4-4} with $a=h_p(u(x)-u(y))\phi(x)^{q-1}$, $b=|\phi(x)-\phi(y)|B$ and and $l=\frac{p}{p-1}$ to get
    \begin{align}\label{D-L4-9}
        h_p(u(x)-u(y))\phi(x)^{q-1}|\phi(x)-\phi(y)|B &\leq \epsilon|u(x)-u(y)|^p\phi(x)^{\frac{p(q-1)}{(p-1)}}+C(\epsilon,p)|\phi(x)-\phi(y)|^p B^p\nonumber\\
        &\leq \epsilon|u(x)-u(y)|^p\phi(x)^{q}\nonumber\\
        &\ \ \ +C(\epsilon,p)|\phi(x)-\phi(y)|^p B^p.
    \end{align}
    Similarly, we have 
    \begin{align}\label{D-L4-10}
        h_p(u(x)-u(y))\phi(y)^{q-1}|\phi(x)-\phi(y)|B \leq \epsilon|u(x)-u(y)|^p\phi(y)^{q}+C(\epsilon,p)|\phi(x)-\phi(y)|^p B^p.
    \end{align}
    Utilizing \eqref{D-L4-5} with $l=q,\ a=\phi(x),\ b=\phi(y)$ and using \eqref{D-L4-9}, \eqref{D-L4-10}, we obtain
    \begin{align}\label{D-L4-11}
        h_p(u(x)-u(y))(\phi(x)^q-\phi(y)^q)B &\leq C(q)\epsilon|u(x)-u(y)|^p(\phi(x)^{q}+\phi(y)^{q})\nonumber\\
        &\ \ \ +C(\epsilon,p,q)|\phi(x)-\phi(y)|^p B^p.
    \end{align}
    The symmetry of functions in the RHS of \eqref{D-L4-11} gives
    \begin{align}\label{D-L4-11'}
        \int_{\mathbb{R}^N}\int_{\mathbb{R}^N}& h_p(u(x)-u(y))(\phi(x)^q-\phi(y)^q)BK_{s,p}(x,y)B dydx\nonumber\\
        &\leq C(q)\epsilon\int_K\int_{\mathbb{R}^N}|u(x)-u(y)|^p\phi(x)^{q}K_{s,p}(x,y)dydx\nonumber\\
        &\ \ \ +C(\epsilon,p,q)B^p\iint_{Q(K)}|\phi(x)-\phi(y)|^pK_{s,p}(x,y)dydx.
    \end{align}
    Note that by \eqref{D-T2-f}, we have
    \begin{align}\label{D-L4-12}
        \int_K |f&(x,v,D_s^p u, D_{a,t}^q u)|v(x)dx \nonumber\\
        &\leq \int_K C_0 \left(|D_s^p u|^{\frac{p-1}{p}}+|D_{a,t}^q u|^{\frac{q-1}{q}}\right)B\phi(x)^qdx+\|h\|_{L^\infty(K)}|K|B,
    \end{align}
    where $C_0=\max\left\{\gamma_1(t)+\gamma_2(t),: t\in\left[-\|u\|_{L^\infty(\mathbb{R}^N)}, \|u\|_{L^\infty(\mathbb{R}^N)}\right]\right\}$. Making use of \eqref{D-L4-4} with $l=\frac{p}{p-1}$ $a=|D_s^p u|^{\frac{p-1}{p}}\phi(x)^{q-1}$ and $b=B\phi(x)$, and since $|\phi|\leq 1$ we have
    \begin{align}\label{D-L4-13}
         \int_K |D_s^p u|^{\frac{p-1}{p}}B\phi^q(x)dx&\leq \epsilon\int_K |D_s^p u|\phi(x)^{\frac{p(q-1)}{p-1}}dx +C'B^p\nonumber\\
         &\leq \epsilon C(\Lambda_1)  \int_K \int_{\mathbb{R}^N} |u(x)-u(y)|^p K_{s,p}(x,y)\phi(x)^qdydx+C'B^p.
    \end{align}
    where $C'=C'(\epsilon, p,K)$.  Similarly, we get
    \begin{align}\label{D-L4-14}
         \int_K |D_{a,t}^q u|^{\frac{q-1}{q}}B\phi^q(x)dx&\leq \epsilon\int_K |D_{a,t}^q u|\phi(x)^{q}dx +C(\epsilon,q, K)B^q\nonumber\\
         &\leq \epsilon C(\Lambda_2) \int_K \int_{\mathbb{R}^N} a(x,y)|u(x)-u(y)|^q K_{t,q}(x,y)\phi(x)^{q}dydx\nonumber\\
         &\ \ \ +C(\epsilon,q, K)B^q.
    \end{align}
    Using \eqref{D-L4-13} and \eqref{D-L4-14} in \eqref{D-L4-12}, we deduce
   \begin{align}\label{D-L4-15}   
      \int_K& |f(x,v,D_s^p u, D_{a,t}^q u)|v(x)dx \nonumber\\
      &\ \ \ \leq C(K,\gamma_1, \gamma_2,\Lambda_1,\Lambda_2)\bigg( \epsilon\int_K \int_{\mathbb{R}^N} |u(x)-u(y)|^p K_{s,p}(x,y)\phi(x)^qdydx \nonumber\\
      &\ \ \ +\epsilon\int_K \int_{\mathbb{R}^N} a(x,y)|u(x)-u(y)|^q K_{t,q}(x,y)\phi(x)^{q}dydx\nonumber\\
      &\ \ \ +C(\epsilon,p,q, K)\max\{B^p,B^q\}+\|h\|_{L^\infty(K)}B\bigg).
   \end{align}
   Combining \eqref{D-L4-3}, \eqref{D-L4-8'}, \eqref{D-L4-11'} and \eqref{D-L4-15} and choosing $\epsilon$ sufficiently small, we have
   \begin{align*}
       \int_K \int_{\mathbb{R}^N}&|u(x)-u(y)|^p K_{s,p}(x,y)\phi(x)^q dydx+\int_K \int_{\mathbb{R}^N} a(x,y) |u(x)-u(y)|^q K_{t,q}(x,y)\phi(x)^{q}dydx \\
       &\leq C(p,q,K,\gamma_1, \gamma_2,\Lambda_1,\Lambda_2,h)\max\{B^p,B^q,B\}\bigg(\iint_{Q(K)} |\phi(x)-\phi(y)|^p K_{s,p}(x,y)dydx\\
       &+\iint_{Q(K)}a(x,y) |\phi(x)-\phi(y)|^q K_{t,q}(x,y)dydx+1\bigg).
   \end{align*}
\end{proof}

\noindent The upcoming two {lemmas} help to prove Lemma \ref{D-L5} which is crucial in the proof of Theorem \ref{D-T2}. We first consider the case $q>\frac{2}{2-t}$.
\begin{lemma}\label{D-L6}
    Let $q>\frac{2}{2-t}$, $u\in L_{a,t,q}^{q-1}(\mathbb{R}^N)$ be bounded and $u_\epsilon$ be given by Definition \ref{D-IC}. Then, for any $\phi \in C_c^\infty(\Omega_{r(\epsilon)})$ with $\phi\geq 0$ and $K=\operatorname{supp}\phi$, we have
   \begin{align*}
        \int_K\bigg(2\operatorname{P.V.}\int_{\mathbb{R}^N}a(x,y)&h_q(u_\epsilon(x)-u_\epsilon(y))K_{t,q}(x,y)dy\bigg)\phi(x)dx \\
        &\leq \iint_{Q(K)}(a(x,y)h_q(u_\epsilon(x)-u_\epsilon(y))(\phi(x)-\phi(y))K_{t,q}(x,y)dydx.
    \end{align*}
\end{lemma}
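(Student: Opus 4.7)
\noindent\emph{Proof proposal.} The plan is to truncate the principal value at scale $\delta>0$, invoke Fubini together with the symmetries of $a$ and $K_{t,q}$ and the antisymmetry of $(x,y)\mapsto h_q(u_\epsilon(x)-u_\epsilon(y))$ to convert the truncated left-hand side into a symmetric double integral over $Q(K)$, and then pass to the limit $\delta\to 0^+$ using the regularity of the infimal convolution $u_\epsilon$.

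Set $\Psi(x,y):=a(x,y)\,h_q(u_\epsilon(x)-u_\epsilon(y))\,K_{t,q}(x,y)$, which satisfies $\Psi(y,x)=-\Psi(x,y)$. For fixed $\delta>0$ the function $\Psi$ is absolutely integrable on $\{(x,y)\in K\times\mathbb{R}^N:|x-y|>\delta\}$, since $K$ is compact and $u_\epsilon\in L^\infty\cap L_{a,t,q}^{q-1}(\mathbb{R}^N)$, so Fubini applies. Splitting the inner $y$-integration into $y\in K$ and $y\in\mathbb{R}^N\setminus K$, performing the change of variables $x\leftrightarrow y$ on one copy of each piece, and using the antisymmetry of $\Psi$ together with $\phi(y)=0$ for $y\notin K$, one obtains the identity
\begin{equation*}
2\int_K\phi(x)\int_{\mathbb{R}^N\setminus B_\delta(x)}\Psi(x,y)\,dy\,dx=\iint_{Q(K)\cap\{|x-y|>\delta\}}\Psi(x,y)(\phi(x)-\phi(y))\,dy\,dx.
\end{equation*}

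Next I would pass to $\delta\to 0^+$. For the right-hand side, the Lipschitz regularity of $u_\epsilon$ (a standard property of infimal convolutions, cf. Lemma~3.1 in \cite{BM2021}) combined with $|\phi(x)-\phi(y)|\leq\|\nabla\phi\|_\infty|x-y|$ gives a near-diagonal bound of the form $|\Psi(x,y)(\phi(x)-\phi(y))|\lesssim|x-y|^{q(1-t)-N}$, integrable on $Q(K)$ since $q(1-t)>0$; the tail contribution is controlled by $u_\epsilon\in L_{a,t,q}^{q-1}(\mathbb{R}^N)$. Dominated convergence then yields convergence of the right-hand side to $\iint_{Q(K)}\Psi(x,y)(\phi(x)-\phi(y))\,dy\,dx$. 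For the left-hand side, the reflection pairing $y\mapsto 2x-y$ together with the translation invariance of $a$ and $K_{t,q}$ recasts the inner integral as $\tfrac12\int_{|z|>\delta}a(0,z)K_{t,q}(0,z)[h_q(\xi)+h_q(\eta)]\,dz$, where $\xi=u_\epsilon(x)-u_\epsilon(x+z)$ and $\eta=u_\epsilon(x)-u_\epsilon(x-z)$. At every point of Alexandrov twice-differentiability of the semi-concave function $u_\epsilon$ (hence a.e.\ $x\in\mathbb{R}^N$) one has $|\xi+\eta|\leq C(x)|z|^2$, and the elementary estimates for $h_q$ give $|h_q(\xi)+h_q(\eta)|\leq C(x)|z|^q$ when $q\geq 2$ and $|h_q(\xi)+h_q(\eta)|\leq C(x)|z|^{2(q-1)}$ when $1<q<2$. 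The assumption $q>\frac{2}{2-t}$ is precisely what renders $|z|^{2(q-1)-N-tq}$ integrable near the origin, ensuring the P.V. limit $F(x):=\lim_{\delta\to 0^+}\int_{\mathbb{R}^N\setminus B_\delta(x)}\Psi(x,y)\,dy$ exists for a.e. $x\in K$.

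To conclude the inequality, I would combine the above identity with the fact that the one-sided semi-concavity estimate $\xi+\eta\geq-C|z|^2$ and the monotonicity of $h_q$ yield a lower bound $F_\delta(x)\geq -C$ that is uniform in $\delta$. Since $\phi\geq 0$, Fatou's lemma applied to $\phi(x)(F_\delta(x)+C)\geq 0$ gives
\begin{equation*}
\int_K 2\phi(x)\,F(x)\,dx\leq\liminf_{\delta\to 0^+}\int_K 2\phi(x)\,F_\delta(x)\,dx=\lim_{\delta\to 0^+}\iint_{Q(K)\cap\{|x-y|>\delta\}}\Psi(x,y)(\phi(x)-\phi(y))\,dy\,dx,
\end{equation*}
which is exactly the desired bound. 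The hard part will be the case $1<q<2$: the Hölder (rather than Lipschitz) behaviour of $h_q$ near the origin forces a careful exploitation of the semi-concavity of $u_\epsilon$ in the reflection pairing, and the threshold $q>\frac{2}{2-t}$ is sharp for the resulting diagonal integrability estimate to go through.
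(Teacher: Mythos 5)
Your proposal is correct and takes a genuinely different route from the paper's. The paper approximates the \emph{function}: it mollifies $u_\epsilon$ to $v_d\in C^2(\Omega_{r(\epsilon)})\cap L_{a,t,q}^{q-1}(\mathbb{R}^N)$, derives the symmetrization identity $\int_K G(v_d)\phi\,dx=\iint_{Q(K)}f_d\,dy\,dx$ for the smooth approximants, obtains uniform-in-$d$ lower bounds on $G(v_d)$ via Taylor expansion and the uniform $C^2$ control of $v_d$ from Lemma~3.1 of \cite{BM2021}, and then sends $d\to 0$ using Fatou on the left and dominated convergence on the right. You instead approximate the \emph{integral}: you truncate the principal value at scale $\delta$, where the symmetrization identity over $Q(K)\cap\{|x-y|>\delta\}$ is elementary (the truncation set being symmetric under $x\leftrightarrow y$), and you extract the required uniform lower bound $F_\delta\geq -C$ directly from the semi-concavity inequality $u_\epsilon(x+z)+u_\epsilon(x-z)-2u_\epsilon(x)\leq C|z|^2$ and the oddness/monotonicity of $h_q$, via the reflection pairing $y\mapsto 2x-y$ made available by the translation invariance and symmetry of $a$ and $K_{t,q}$. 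Alexandrov twice-differentiability of the semi-concave $u_\epsilon$ then gives a.e.\ existence of the principal value, and the threshold $q>\frac{2}{2-t}$ is read off as $2(q-1)>tq$, i.e.\ local integrability of $|z|^{2(q-1)-N-tq}$ — exactly the exponent that appears in the paper's \eqref{D-L6-13}--\eqref{D-L6-14}. Both proofs hinge on the same one-sided (semi-concavity) structure that orients Fatou in the right direction, but your route bypasses the auxiliary mollification and its bookkeeping, including the somewhat delicate extension of $v_d$ by $u_\epsilon$ outside $\Omega_{r(\epsilon)}$, and it makes the role of the hypothesis $q>\frac{2}{2-t}$ more transparent. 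One point to tighten when you write this out: in the Alexandrov paragraph you state a two-sided bound $|h_q(\xi)+h_q(\eta)|\leq C(x)|z|^{2(q-1)}$, but for the Fatou step what is actually needed is the one-sided bound $h_q(\xi)+h_q(\eta)\geq -C|z|^{2(q-1)}$ (resp.\ $-C|z|^{q}$ if $q\geq 2$) with $C$ uniform over $x\in K$; this should be deduced not from pointwise Alexandrov expansions but from the global semi-concavity estimate on a neighborhood of $K$, combined with the local Lipschitz bound on $u_\epsilon$ and the elementary inequalities $|h_q(a)-h_q(b)|\leq C|a-b|^{q-1}$ for $1<q<2$ and $|h_q(a)-h_q(b)|\leq C(|a|+|b|)^{q-2}|a-b|$ for $q\geq 2$.
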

\begin{proof}
    Throughout the proof, we denote
    $$G(v)(x)=2\operatorname{P.V.}\int_{\mathbb{R}^N}a(x,y)h_q(v(x)-v(y))K_{t,q}(x,y)dy.$$
    For each $d>0$, consider the standard mollifier $\rho$ and let $\rho_d(x)=d^{-N}\rho(\frac{x}{d})$. Then, define $v_d:\mathbb{R}^N\rightarrow\mathbb{R}$ by
    \begin{equation}\label{D-L6-1'}
        v_d(x):=\begin{cases}
            \int_{B_d(0)}\rho_d(z)u_\epsilon(x-z)dz, \ &x\in \Omega_{r(\epsilon)},\\
            u_\epsilon(x), &\text{ otherwise. }
        \end{cases}
    \end{equation}
    Then, $v_d\in C^2(\Omega_{r(\epsilon)})\cap L_{a,t,q}^{q-1}(\mathbb{R}^N)$ is smooth in $\Omega_{r(\epsilon)}$ for every $d>0$ and $v_d \rightarrow u_\epsilon$ pointwise a.e. 
    Now for $x\in K$, consider
    \begin{align}\label{D-L6-2}
        G(v_d)(x)=2(I_{1,r}v_d(x)+I_{2,r}v_d(x)),
    \end{align}
    where
    \begin{align*}
        I_{1,r}v(x)&=\int_{\mathbb{R}^N\setminus B_r(x)}a(x,y)h_q(v(x)-v(y))K_{t,q}(x,y)dy\\
        I_{2,r}v(x)&=\operatorname{P.V.}\int_{B_r(x)}a(x,y)h_q(v(x)-v(y))K_{t,q}(x,y)dy\nonumber\\
        &=\int_{B_r(x)}a(x,y)\left(h_q(v_d(x)-v_d(y))-\nabla v_d(x)\cdot(x-y)\right)K_{t,q}(x,y)dy.
    \end{align*}
    Since $u_\epsilon\in L_{a,t,q}^{q-1}(\mathbb{R}^N)\cap L^\infty(\mathbb{R}^N)$,  we get a constant $C_1>0$ such that 
    \begin{align}\label{D-L6-7}
        |I_{1,r}v_d(x)|&\leq C\int_{\mathbb{R}^N\setminus B_r(x)}a(x,y)(1+|u_\epsilon(y)|^{q-1})K_{t,q}(x,y)dy\nonumber\\
        & \leq C_1, \text{ whenever }x\in K.
    \end{align}
    Then, from Lemma 2.4 in \cite{BM2021}, we have
    \begin{equation}\label{D-L6-9}
        h_q(t_1)-h_q(t_2)=(q-1)(t_1-t_2)\int_0^1\left|\tau t_1+(1-\tau)t_2)\right|^{q-2}d\tau, \ t_1,t_2\in\mathbb{R}.
    \end{equation}
    Also, from the Taylor's expansion of $v_d$, we get a $z_y\in \Omega$ such that
    \begin{equation}\label{D-L6-10}
        v_d(y)=v_d(x)+\nabla v_d(x)\cdot(y-x)+D^2v_d(z_y)(y-x)^2.
    \end{equation}
    Using \eqref{D-L6-9} with $t_1=v_d(x)-v_d(y)$ and $t_2=\nabla v_d(x)\cdot(x-y)$ and applying \eqref{D-L6-10} in $ I_{2,r}v_d(x)$, we deduce 
    \begin{align}\label{D-L6-8}
        I_{2,r}v_d(x)= (q&-1)\int_{B_r(x)}a(x,y)(-D^2v_d(z_y))(y-x)^2 \nonumber\\
        &\times\int_0^1\left|\tau(v_d(x)-v_d(y))+(1-\tau)\nabla v_d(x)\cdot(x-y)\right|^{q-2}d\tau K_{t,q}(x,y)dy.
    \end{align}
    By Lemma 3.1 in \cite{BM2021}, we {gets} $c>0$ such that $|D^2v_d|_{L^{\infty}(K)},\|\nabla v_d\|_{L^{\infty}(K)}\leq c$ in $\Omega_{r(\epsilon)}$ for all $d>0$. Then, for $q\geq 2$, utilizing Lemma 2.4 of \cite{BM2021}, we get
    \begin{equation}\label{D-L6-11}
        \int_0^1\left|\tau(v_d(x)-v_d(y))+(1-\tau)\nabla v_d(x)\cdot(x-y)\right|^{q-2}d\tau \leq C|x-y|^{q-2}.
    \end{equation}
    Also, define
    $$A_{r,x}=B_r(x)\cap\{y:D^2v_d(z_y)\geq 0\}.$$
    Thus, from \eqref{D-L6-8} and \eqref{D-L6-11}, we obtain
    \begin{align}\label{D-L6-12}
      I_{2,r}v_d(x)&\geq -M\Lambda_2 C(q-1)\int_{A_{r,x}}|x-y|^{q-(N+tq)}dy\nonumber\\
      &\geq -M\Lambda_2 C(q-1)\int_{B_r(x)}|x-y|^{q-(N+tq)}dy=-C_2 \text{ for all }d>0.
    \end{align}
    If $\frac{2}{2-t}<q<2$, from Lemma 2.4 of \cite{BM2021}, we have
    \begin{align}\label{D-L6-13}
        &\left|\int_0^1\left|\tau(v_d(x)-v_d(y))+(1-\tau)\nabla v_d(x)\cdot(x-y)\right|^{q-2}d\tau \right|\nonumber\\
        &\hspace{1.5cm}\leq C|v_d(x)-v_d(y)-\nabla v_d(x)\cdot(x-y)|^{q-2}\nonumber\\
        &\hspace{1.5cm}=|D^2v_d(z_y)(y-x)^2|^{q-2}\nonumber\\
        &\hspace{1.5cm}\leq C|y-x|^{2(q-2)}.
    \end{align}
    Using \eqref{D-L6-13} in \eqref{D-L6-8}, we deduce the for all $d>0$,
    \begin{equation}\label{D-L6-14}
        I_{2,r}v_d(x)\geq -M\Lambda_2(q-1)C\int_{B_r(x)}|x-y|^{2(q-2)+2-(N+tq)}=-C_3.
    \end{equation}
    Therefore, from \eqref{D-L6-7}, \eqref{D-L6-12} and \eqref{D-L6-14}, for $x\in K$, we have
    \begin{equation*}
        G(v_d)(x)\geq -2(C_1+C_2+C_3).
    \end{equation*}
    Then, applying the Fatou's lemma, we deduce
    \begin{equation}\label{D-L6-16}
        \int_{K}\liminf\limits_{d\rightarrow 0}G(v_d) \phi dx \leq \liminf\limits_{d\rightarrow 0}\int_{K}G(v_d) \phi dx.
    \end{equation}
    Now, from \eqref{D-L6-7}, \eqref{D-L6-12} and \eqref{D-L6-14}, we get a constant $C>0$ such that for all $d>0$,
    \begin{align}\label{D-L6-17}
        &\chi_{\mathbb{R}^N \setminus B_r(x)}a(x,y)h_q(v(x)-v(y))K_{t,q}(x,y)\nonumber\\
        &+\chi_{B_r(x)}a(x,y)\bigg(h_q(v_d(x)-v_d(y))-\nabla v_d(x)\cdot(x-y)\bigg)K_{t,q}(x,y) \\
        & \geq \begin{cases}
            -C\left(a(x,y)(1+|u_\epsilon(y)|^{q-1})|x-y|^{-(N+tq)} +|x-y|^{2(q-1)-(N+tq)}\right), \frac{2}{2-t}<q< 2,\\
            -C\left(a(x,y)(1+|u_\epsilon(y)|^{q-1})|x-y|^{-(N+tq)} +|x-y|^{q-(N+tq)}\right), \hspace{0.8cm} q\geq 2,            
        \end{cases}\nonumber
    \end{align}
     which is integrable since $u_\epsilon\in L_{a,t,q}^{q-1}(\mathbb{R}^N)$. We have, $v_d\rightarrow u_\epsilon$ pointwise in $\mathbb{R}^N$. Therefore, from \eqref{D-L6-2} and \eqref{D-L6-17}, using Fatou's lemma, we obtain 
     \begin{align}\label{D-L6-18}
         G(u_\epsilon)(x)&=2\operatorname{P.V.}\int_{\mathbb{R}^N}\liminf\limits_{d\rightarrow 0} a(x,y)h_q(v_d(x)-v_d(y))K_{t,q}(x,y)dy \nonumber\\
         &\leq \liminf\limits_{d\rightarrow 0}2\operatorname{P.V.}\int_{\mathbb{R}^N} a(x,y)h_q(v_d(x)-v_d(y))K_{t,q}(x,y)dy \nonumber\\
         &= \liminf\limits_{d\rightarrow 0}G(v_d)(x) 
     \end{align}
     for every $x\in K$. Substituting \eqref{D-L6-18} in \eqref{D-L6-16}, we have
     \begin{equation}\label{D-L6-19}
         \int_{K}G(u_\epsilon) \phi dx \leq \liminf\limits_{d\rightarrow 0}\int_{K}G(v_d) \phi dx.
     \end{equation}
     Now, since $v_d\in C^2(\Omega_{r(\epsilon)})\cap L_{a,t,q}^{q-1}(\mathbb{R}^N)$ is smooth in $\Omega_{r(\epsilon)}$ for every $d>0$, we have
     \begin{equation}\label{D-L6-20}
         \int_{K}G(v_d) \phi dx=\iint_{Q(K)}f_d(x,y)dydx,
     \end{equation}
     where $$f_d(x,y)=a(x,y)h_q(v_d(x)-v_d(y))(\phi(x)-\phi(y))K_{t,q}(x,y).$$
     Choose $0<\nu<\operatorname{dist}(K,\partial\Omega_{r(\epsilon)})$ and define $A:=\{x\in \Omega_{r(\epsilon)}: \operatorname{dist}(x,K)\leq \nu\}$. 
     Note that $a(x,y)\leq M$ and  $u_\epsilon\in L_{a,t,q}^{q-1}(\mathbb{R}^N)$ is bounded. Also, $|x-y|\geq \nu$ for all $x,y \in K\times(\mathbb{R}^N\setminus A)$. Thus, we obtain a constant $C>0$ such that
     \begin{equation}\label{D-L6-22}
         |f_d(x,y)|\leq Ca(x,y)\frac{1+|u_\epsilon(y)|^{q-1}}{|x-y|^{N+tq}}\in L^1(K\times(\mathbb{R}^N\setminus A)).
     \end{equation}
     Now consider $(x,y)\in K\times A$. Using \eqref{D-L4-4} with $a=\left(a(x,y)K_{t,q}(x,y)\right)^{\frac{q-1}{q}}|v_d(x)-v_d(y)|^{q-1}$, $b=\left(a(x,y)K_{t,q}(x,y)\right)^{\frac{1}{q}}|\phi(x)-\phi(y)|,\ \epsilon=\frac{1}{2}$, and $l=\frac{q}{q-1}$, we get
     \begin{equation}\label{D-L6-21}
         |f_d(x,y)|\leq Ca(x,y)K_{t,q}(x,y)\left(|v_d(x)-v_d(y)|^q+|\phi(x)-\phi(y)|^q\right).
     \end{equation}
     Since $u_\epsilon$ is locally Lipschitz and $|\rho|\leq 1$, for small $d>0$ and $x\in K,\ y\in A$,  we have
     \begin{equation*}
         |v_d(x)-v_d(y)|\leq \int_{B_d(0)}|u_\epsilon(x-z)-u_\epsilon(y-z)|\rho(z)dz\leq C|x-y|.
     \end{equation*}
     Also, $\phi$ is Lipschitz. Thus, there {exists} $C>0$ such that
     \begin{equation*}
         \max\{|v_d(x)-v_d(y)|,|\phi(x)-\phi(y)|\}\leq C'|x-y|
     \end{equation*}
     Then from \eqref{D-L6-21}, we have
     \begin{align}\label{D-L6-23}
         |f_d(x,y)|\leq MC\Lambda_2\frac{1}{|x-y|^{N+tq-q}} \in L^1(K\times A).
     \end{align}
     Then, \eqref{D-L6-22} and \eqref{D-L6-23} from we have
     \begin{equation*}
         |f_d(x,y)|\leq C\bigg(\chi_{K\times A}\frac{1}{|x-y|^{N+tq-q}}+\chi_{K\times(\mathbb{R}^N)\setminus A)}a(x,y)\frac{1+|u_\epsilon(y)|^{q-1}}{|x-y|^{N+tq}}\bigg),
     \end{equation*}
     which is integrable in $K\times \mathbb{R}^N$. Therefore, by the dominated convergence theorem,
     \begin{equation}\label{D-L6-25}
         \lim\limits_{d  \rightarrow  0}\int_{K}\int_{\mathbb{R}^N}f_d(x,y)dydx=\int_{K}\int_{\mathbb{R}^N}a(x,y)h_q(u_\epsilon(x)-u_\epsilon(y))(\phi(x)-\phi(y)K_{t,q}(x,y)dydx.
     \end{equation}
     By the symmetry of $f_d(x,y)$, proceeding similarly, we get
     \begin{equation}\label{D-L6-26'}
         \lim\limits_{d  \rightarrow  0}\int_{\mathbb{R}^N\setminus K}\int_{K}f_d(x,y)dydx=\int_{\mathbb{R}^N\setminus K}\int_{K}a(x,y)h_q(u_\epsilon(x)-u_\epsilon(y))(\phi(x)-\phi(y))K_{t,q}(x,y)dydx.
    \end{equation}
     From \eqref{D-L6-25} and \eqref{D-L6-26'}, we have
    \begin{equation}\label{D-L6-26}
         \lim\limits_{d  \rightarrow  0}\iint_{Q(K)} f_d(x,y)dydx=\iint_{Q(K)}a(x,y)h_q(u_\epsilon(x)-u_\epsilon(y))(\phi(x)-\phi(y))K_{t,q}(x,y)dydx.
    \end{equation}
    Thus, taking limit infimum as $d\rightarrow 0$ in \eqref{D-L6-20}, and using \eqref{D-L6-19} and \eqref{D-L6-26}, we get the required result.
\end{proof}

\noindent Next, we consider the case $1<q\leq \frac{2}{2-t}$. The proof of Lemma \ref{D-L6} cannot be applied in this case since  $$\operatorname{P.V.}\int_{\mathbb{R}^N}a(x,y)h_q(v(x)-v(y))K_{t,q}(x,y)dy$$
may not be well defined for $v \in L_{a,t,q}^{q-1}(\mathbb{R}^N)$ if $v\notin C_\beta^2(\mathcal{D})$ for a $\beta>\frac{sp}{p-1}$ and an open set $\mathcal{D}$ with $x\in\mathcal{D}$.

\begin{lemma}\label{D-L7}
    Let $1<q\leq \frac{2}{2-t}$ and $u\in L_{a,t,q}^{q-1}(\mathbb{R}^N)$ be bounded and $u_\epsilon$ be given by Definition \ref{D-IC}. Then, for any $\phi \in C_c^\infty(\Omega_{r(\epsilon)})$ with $\phi\geq 0, \ K=\operatorname{supp}\phi$, we have
    \begin{align*}
         \int_K\bigg(2\operatorname{P.V.}\int_{\mathbb{R}^N}a(x,y)&h_q(u_\epsilon(x)-u_\epsilon(y))K_{t,q}(x,y)dy\bigg)\phi(x)dx \\
        &\leq \iint_{Q(K)}(a(x,y)h_q(u_\epsilon(x)-u_\epsilon(y))(\phi(x)-\phi(y))K_{t,q}(x,y)dydx.
    \end{align*}
\end{lemma}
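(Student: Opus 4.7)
The plan is to follow the framework of Lemma~\ref{D-L6}, mollifying $u_\epsilon$ on $\Omega_{r(\epsilon)}$ by standard mollifiers $\rho_d$ to obtain smooth $v_d$ (equal to $u_\epsilon$ outside $\Omega_{r(\epsilon)}$) converging pointwise to $u_\epsilon$, establishing the inequality at the level of $v_d$, and then passing $d\to 0$. The specific obstruction in the regime $1<q\leq \frac{2}{2-t}$ is that the Taylor-based pointwise lower bound for $I_{2,r}(v_d)(x)$ employed in Lemma~\ref{D-L6} produces the exponent $2(q-1)-N-tq$ that fails to be integrable near the diagonal, and moreover, the pointwise principal value need not exist at critical points of $v_d$.

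To circumvent this, I would exploit the reflection symmetry of the data around the base point $x$. Since $a$ and $K_{t,q}$ are translation invariant and symmetric in their two arguments, one checks that $a(x,2x-y)=a(x,y)$ and $K_{t,q}(x,2x-y)=K_{t,q}(x,y)$. Performing the change of variable $y\mapsto 2x-y$ on half of the principal value gives the symmetrized form
\begin{equation*}
I_{2,r}(v_d)(x)=\tfrac{1}{2}\int_{B_r(x)} a(x,y)K_{t,q}(x,y)\bigl[h_q(v_d(x)-v_d(y))+h_q(v_d(x)-v_d(2x-y))\bigr]\,dy,
\end{equation*}
which no longer requires the P.V. interpretation. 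Setting $A=v_d(x)-v_d(y)$ and $B=v_d(x)-v_d(2x-y)$, the uniform Hessian bound from Lemma~3.1 of \cite{BM2021} yields $|A+B|\leq c|x-y|^2$, while Taylor expansion shows $A$ and $-B$ share the leading term $-\nabla v_d(x)\cdot(y-x)$. Applying the identity of Lemma~2.4 in \cite{BM2021} with $t_1=A$, $t_2=-B$ together with the estimate $|\tau A-(1-\tau)B|\gtrsim |\nabla v_d(x)||x-y|$ for $|x-y|$ small yields $|h_q(A)+h_q(B)|\leq C|\nabla v_d(x)|^{q-2}|x-y|^q$ on the noncritical set; combined with $K_{t,q}\leq \Lambda_2|x-y|^{-(N+tq)}$ this produces the integrable bound $|x-y|^{q(1-t)-N}$, since $q(1-t)>0$.

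The hardest step will be passing to the limit. I would work with the truncated operator $G_r(v)(x):=2\int_{|y-x|>r}a(x,y)h_q(v(x)-v(y))K_{t,q}(x,y)\,dy$ rather than with $G(v)$ directly, for which Fubini applies at each fixed $r>0$ (all integrals being absolutely convergent) and yields, via the symmetries of $a$, $K_{t,q}$ and the antisymmetry of $h_q$ in its argument, the identity
\begin{equation*}
\int_K G_r(v_d)(x)\phi(x)\,dx=\iint_{Q(K),\,|x-y|>r} a(x,y)h_q(v_d(x)-v_d(y))\bigl(\phi(x)-\phi(y)\bigr)K_{t,q}(x,y)\,dy\,dx.
\end{equation*}
On the right-hand side, the uniform Lipschitz bounds on $v_d$ and $\phi$ together with the $L^{q-1}_{a,t,q}$-tail control of $u_\epsilon$ supply a common integrable majorant independent of $r$ and $d$, so two successive dominated-convergence passages (first $r\to 0^+$, then $d\to 0$) deliver the right-hand side of the lemma. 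On the left-hand side, Fatou's lemma applied to $\liminf_{r\to 0^+}G_r(u_\epsilon)\cdot\phi$, combined with the uniform lower bound on $G_r(v_d)$ furnished by the symmetrized representation above together with the one-sided semiconcavity of $u_\epsilon$ (which converts the absence of a uniform pointwise control at critical points of $v_d$ into an integrable lower bound on $K$), then yields the desired inequality. The most delicate technical point will be securing this uniform lower bound precisely at critical points of $v_d$, where the factor $|\nabla v_d(x)|^{q-2}$ degenerates; this is the step that genuinely accommodates the new range $1<q\leq \frac{2}{2-t}$.
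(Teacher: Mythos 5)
Your proposal diverges structurally from the paper's proof, and the divergence exposes a genuine gap. You try to carry the mollified functions $v_d$ through the hard estimate and then patch the degeneracy at critical points via semiconcavity of $u_\epsilon$; the paper's proof uses the mollification \emph{only} to establish the regularized integration-by-parts identity (with kernel $(|x-y|+\delta)^{-(N+tq)}$) and then abandons $v_d$ entirely. The heart of the paper's argument is the inf-convolution comparison: if $a_x$ realizes the infimum defining $u_\epsilon(x)$, then $u_\epsilon(x)-u_\epsilon(y)\geq v_{a_x}(x)-v_{a_x}(y)$ for all $y$, so by monotonicity of $h_q$ one can bound the kernel integral from below by the same integral with the explicit supporting paraboloid $v_{a_x}(\cdot)=u(a_x)+\tfrac{|\cdot-a_x|^l}{l\epsilon^{l-1}}$. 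That paraboloid has Hessian vanishing at its critical point at rate $|y-a_x|^{l-2}$, and after splitting the radial integral at $|x-a_x|$ the resulting power is $l(q-1)-tq\geq 0$ — nonnegative precisely because $l\geq\frac{sp}{p-1}\geq\frac{tq}{q-1}$. This is what makes the range $1<q\leq\frac{2}{2-t}$ tractable.

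Your reflection symmetrization $y\mapsto 2x-y$ is a nice alternative to the paper's subtraction of the odd term $h_q(\nabla v_{a_x}(x)\cdot(x-y))\chi_{B_r(x)}$, and both serve to cancel the first-order part. However, two steps in your outline do not survive scrutiny. First, the claimed pointwise bound $|\tau A-(1-\tau)B|\gtrsim|\nabla v_d(x)||x-y|$ is false near directions $y-x\perp\nabla v_d(x)$: the leading term is the directional derivative $\nabla v_d(x)\cdot(y-x)$, which vanishes there while the factor $|\cdot|^{q-2}$ blows up; one must instead integrate over the sphere of directions as in Lemma~3.5 of \cite{KKL2019}, which is what the paper actually invokes. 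Second — and this is the fatal gap — semiconcavity of $u_\epsilon$ cannot rescue the critical-point case. Semiconcavity gives only $D^2u_\epsilon\leq CI$, a Hessian bound that does not improve as one approaches a critical point. At a critical point the symmetrized integrand is then controlled only by $|x-y|^{2(q-1)-N-tq}$, which is non-integrable over $B_r(x)$ exactly when $q(2-t)\leq 2$, i.e., in the range this lemma addresses. The paper's paraboloid has the extra feature that its Hessian degenerates at the matching rate near its own critical point, and that is not a semiconcavity fact but a fact about the specific inf-convolution kernel $|\cdot|^l$. Without the comparison $h_q(u_\epsilon(x)-u_\epsilon(y))\geq h_q(v_{a_x}(x)-v_{a_x}(y))$ — which you cannot access once you have mollified — the uniform lower bound needed for Fatou's lemma fails, so the proposal does not close.
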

\begin{proof}
    For each $v:\mathbb{R}^N \rightarrow\mathbb{R}$ {and $\delta>0$}, we denote
    \begin{equation*}
        G_\delta(v)(x)=2\int_{\mathbb{R}^N} a(x,y)\frac{h_q(v(x)-v(y))}{(|x-y|+\delta)^{N+tq}}dy
    \end{equation*}
    Let $v_d$ be defined by \eqref{D-L6-1'}. Since $v_d \in C^2(\Omega_{r(\epsilon)})\cap L_{a,t,q}^{q-1}(\mathbb{R}^N)$, using integration by parts, we have
    \begin{equation*}
        \int_{\mathbb{R}^N}G_\delta(v_d)\phi dx=\int_{\mathbb{R}^N}\int_{\mathbb{R}^N}a(x,y)\frac{h_q(v_d(x)-v_d(y))(\phi(x)-\phi(y))}{(|x-y|+\delta)^{N+tq}}dydx.
    \end{equation*}
    Proceeding similar to the proof of Lemma \ref{D-L6}, we get
    \begin{equation}\label{D-L7-2}
        \int_{\mathbb{R}^N}G_\delta(u_\epsilon)\phi dx \leq \int_{\mathbb{R}^N}\int_{\mathbb{R}^N}a(x,y)\frac{h_q(u_\epsilon(x)-u_\epsilon(y))(\phi(x)-\phi(y))}{(|x-y|+\delta)^{N+tq}}dydx.
    \end{equation}
    Fix an $x\in\Omega_{r(\epsilon)}$. Given $z\in \Omega_{r(\epsilon)}$, define $v_z:\Omega \rightarrow\mathbb{R}$ by
    $$v_z(y)=u(z)+\frac{|y-z|^l}{l\epsilon^{l-1}}.$$
    Since $\Omega$ is a bounded domain, $u$ is bounded and for $l\geq 2$, there exists a constant $m>0$ such that for all $y,z\in\Omega_{r(\epsilon)}$, we get
    \begin{align*}
        |v_z(y)|&\leq |u(z)|+\frac{|z-y|^l}{l\epsilon^{l-1}}\leq m, \\
        |\nabla v_z(y)|&=\frac{|z-y|^{l-1}}{\epsilon^{l-1}}\leq m,\\
        -mI&\leq -(l-1)\frac{|z-y|^{l-2}}{\epsilon^{l-1}}I\leq D^2v_z(y)\leq (l-1)\frac{|z-y|^{l-2}}{\epsilon^{l-1}}I\leq mI,
    \end{align*}
    where $I$ is the identity matrix. Define
    $$V(y)=v_z(x)+\nabla v_z(x)\cdot(y-x).$$
    Since $v_z\in C^2(\Omega_{r(\epsilon)})$, it can be easily seen that there exist $b_1,b_2\in \mathbb{R}^N$ lying in the line segment joining $x$ and $y$ such that $$v_z(y)-V(y)=v_z(y)-v_z(x)-\nabla v_z(x).(y-x)=\left(\nabla v_z(b_1)-\nabla v_z(x)\right).(y-x)=D^2 v_z(b_2).(x-y)^2.$$
   Consequently, using Proposition 2.5 in \cite{FZ2023}, we deduce
    \begin{align}\label{D-L7-4}
        \bigg|&h_q(v_{z}(x)-v_{z}(y))-h_q(V(x)-V(y))\bigg|\nonumber\\
        &\leq C\bigg(|V(x)-V(y)|+|v_{z}(x)-v_{z}(y)-(V(x)-V(y))|\bigg)^{p-2}|v_{z}(x)-v_{z}(y)-(V(x)-V(y))|\nonumber\\
        &\leq C\bigg(|\nabla v_z(x)\cdot(y-x)|+|v_{z}(y)-V(y)|\bigg)^{q-2}|v_{z}(y)-V(y)|\nonumber\\
        &=C\bigg(|\nabla v_z(x)\cdot(y-x)|+|D^2 v_z(b_2)\cdot(x-y)^2|\bigg)^{q-2}|D^2 v_z(b_2)\cdot(x-y)^2|.
    \end{align}
    Since $q<2$, it can be easily observed that given $\mu\geq0$, the map $\tau\mapsto(\mu+\tau)^{q-2}\tau$ is non-decreasing in $[0,\infty)$. Let $r>0$ such that $B_r(x)\subset \Omega_{r(\epsilon)}$, $\omega_y= \sup\{|D^2 v_z(b)|:|x-b|<|x-y|\}$ and $\omega= \sup\limits_{b\in \Omega_{r(\epsilon)}}|D^2 v_z(b)|$.
    By Lemma 4.2 in \cite{FZ2023} and \eqref{D-L7-4}, we get
    \begin{align}\label{D-L7-3}
        \bigg|\int_{B_r(x)}&a(x,y)\frac{h_q(v_{z}(x)-v_{z}(y))}{\left(|x-y|+\delta\right)^{N+tq}}dy \bigg|\nonumber\\
        &\leq \bigg|\int_{B_r(x)}a(x,y)\frac{h_q(V(x)-V(y))}{\left(|x-y|+\delta\right)^{N+tq}}dy \bigg|\nonumber\\
        &\ \ \ +\bigg|\int_{B_r(x)}a(x,y)\frac{\left(h_q(v_{z}(x)-v_{z}(y))-h_q(V(x)-V(y))\right)}{\left(|x-y|+\delta\right)^{N+tq}}dy \bigg| \nonumber\\
        &=\bigg|\int_{B_r(x)}a(x,y)\frac{\left(h_q(v_{z}(x)-v_{z}(y))-h_q(V(x)-V(y))\right)}{\left(|x-y|+\delta\right)^{N+tq}}dy \bigg|\nonumber\\
        &\leq C\bigg|\int_{B_r(x)}a(x,y)\frac{\bigg(|\nabla v_z(x)\cdot(y-x)|+\omega_y|x-y|^2\bigg)^{q-2}\omega_y|x-y|^2}{\left(|x-y|+\delta\right)^{N+tq}}dy \bigg|.
    \end{align}
    First assume $x\notin B_{r}(z)$. Then, $|\nabla v_z(x)|\geq \frac{r^{l-1}}{\epsilon^{l-1}}>0$. Applying Lemma 3.5 in \cite{KKL2019} and using the fact that $q<2$, \eqref{D-L7-3} becomes
    \begin{align}\label{D-L7-5}
        \bigg|\int_{B_r(x)}&a(x,y)\frac{h_q(v_{z}(x)-v_{z}(y))}{\left|x-y|+\delta\right)^{N+tq}}dy \bigg|\nonumber\\
        &\leq C\bigg|\int_{B_r(x)}a(x,y)\frac{\bigg(|\nabla v_z(x)\cdot(y-x)|+\omega_y|x-y|^2\bigg)^{q-2}\omega_y|x-y|^2}{|x-y|^{N+tq}}dy \bigg|\nonumber\\
        &=C\left|\int_{B_r(x)}a(x,y)\bigg(\left|\frac{\nabla v_z(x)}{|\nabla v_z(x)|}\cdot\frac{(y-x)}{|y-x|}\right|+\frac{\omega_y|x-y|}{|\nabla v_z(x)|}\bigg)^{q-2}\frac{|\nabla v_z(x)|^{q-2}\omega_y|x-y|^q}{|x-y|^{N+tq}}dy \right|\nonumber\\
        &\leq C\omega\int_0^r \left(1+\frac{\omega \tau}{|\nabla v_z(x)|}\right)^{q-2}|\nabla v_z(x)|^{q-2}\tau^{q-tq-1}d\tau\nonumber\\
        &\leq C\omega \left(\frac{r^{l-1}}{\epsilon^{l-1}}\right)^{q-2}r^{q(1-t)}= C_0'.
    \end{align}
    Now, assume $x\in B_r(z)$. Then, given $y\in B_r(x)$ with $|x-y|=r'<r$, we have
    \begin{equation}\label{D-L7-6.}
        \omega_y\leq \sup\limits_{b\in B_{r'}(y)}|D^2 v_z(b)|\leq \frac{l-1}{\epsilon^{l-1}}|b-z|^{l-2}\leq C(|b-x|+|z-x|)^{l-2}= C(r'+|z-x|)^{l-2}.
    \end{equation}
    Substituting \eqref{D-L7-6.} in \eqref{D-L7-3} and proceeding similar to \eqref{D-L7-5}, we obtain 
    \begin{align}\label{D-L7-6}
                \bigg|&\int_{B_r(x)}a(x,y)\frac{h_q(v_{z}(x)-v_{z}(y))}{\left|x-y|+\delta\right)^{N+tq}}dy \bigg|\nonumber\\
        &\ \ \ \leq C\Bigg|\int_{B_r(x)}a(x,y)\frac{\bigg(|\nabla v_z(x)\cdot(y-x)|+\omega_y|x-y|^2\bigg)^{p-2}\omega_y|x-y|^2}{|x-y|^{N+tq}}dy \Bigg|\nonumber\\
        &\ \ \ \leq C\int_0^r \left(1+\frac{(\tau+|z-x|)^{l-2}\tau}{|\nabla v_z(x)|} \right)^{q-2}|\nabla v_z(x)|^{q-2}\tau^{q-2}(\tau+|z-x|)^{l-2}\tau^2\tau^{-tq-1}d\tau\nonumber\\
        &\ \ \ =C \int_0^r \left(1+\frac{(\tau+|z-x|)^{l-2}\tau}{|\nabla v_z(x)|} \right)^{q-2}|\nabla v_z(x)|^{q-2}(\tau+|z-x|)^{l-2}\tau^{q-tq-1}d\tau.
    \end{align}
    Let us now estimate the integral on the RHS of \eqref{D-L7-6} by dividing it into the range $[0,|z-x|]$ and $[|z-x|,r]$. Note that we have $l\geq \frac{sp}{p-1}\geq \frac{tq}{q-1}$. Thus, $l(q-1)-tq\geq 0$. Since $|\nabla v_z(x)|= \frac{|z-x|^{l-1}}{\epsilon^{l-1}}$ and $q<2$, we get
    \begin{align}\label{D-L7-14}
        \int_0^{|z-x|}\bigg(1+&\frac{(\tau+|z-x|)^{l-2}\tau}{|\nabla v_z(x)|} \bigg)^{q-2}|\nabla v_z(x)|^{q-2}(\tau+|z-x|)^{l-2}\tau^{q-tq-1}d\tau\nonumber\\
        &\leq C\int_0^{|z-x|}|z-x|^{(q-2)(l-1)}|z-x|^{l-2}\tau^{q-tq-1}d\tau\nonumber\\
        &\leq C|z-x|^{l(q-1)-tq}\leq Cr^{l(q-1)-tq}=C_0''.
    \end{align}
    We also deduce
    \begin{align}\label{D-L7-15}
        \int_{|z-x|}^r\left(1+\frac{(\tau+|z-x|)^{l-2}\tau}{|\nabla v_z(x)|} \right)^{q-2}&|\nabla v_z(x)|^{q-2}(\tau+|z-x|)^{l-2}\tau^{t-tq-1}d\tau\nonumber\\
        &\leq C\int_{|z-x|}^r\left(\frac{r^{l-2}\tau}{|\nabla v_z(x)|} \right)^{q-2}|\nabla v_z(x)|^{q-2}\tau^{l-2}\tau^{t-tq-1}d\tau \nonumber\\
        &\leq Cr^{l(q-1)-tq}=C_0'''.
    \end{align}
    From \eqref{D-L7-5}, \eqref{D-L7-6}, \eqref{D-L7-14} and \eqref{D-L7-15}, we get a constant $C_0$ independent of $z,\delta$ such that
    \begin{equation}\label{D-L7-16}
        \bigg|\int_{B_r(x)}a(x,y)\frac{h_q(v_{z}(x)-v_{z}(y))}{\left|x-y|+\delta\right)^{N+tq}}dy \bigg|\leq C_0.
    \end{equation}
    Now, by Lemma 3.1(v) in \cite{BM2021}, for every $x\in \Omega_{r(\epsilon)}$, we have an element $a_x\in B_{r(\epsilon)}(x)$ such that $u_\epsilon(x)=v_{a_x}(x)$. From the definition of $u_\epsilon$, we deduce
    \begin{equation}\label{D-L7-7.}
        u_\epsilon(x)-u_\epsilon(y)=v_{a_x}(x)-u_\epsilon(y)=v_{a_x}(x)-\inf\limits_{z\in\mathbb{R}^N}v_z(x)\geq v_{a_x}(x)-v_{a_x}(y).
    \end{equation}
    Thus, using \eqref{D-L7-16} and \eqref{D-L7-7.}, we get
    \begin{align}\label{D-L7-7}
        \int_{B_r(x)}a(x,y)\frac{\left(h_q(u_\epsilon(x)-u_\epsilon(y))\right)}{\left(|x-y|+\delta\right)^{N+tq}}dy\geq \int_{B_r(x)}a(x,y)\frac{h_q(v_{a_x}(x)-v_{a_x}(y))}{\left(|x-y|+\delta\right)^{N+tq}}dy\geq -C_0,
    \end{align}
    where $C_0>0$ is independent of $\delta,x$. Now, since $u_\epsilon$ is bounded, we get a constant $C_1$ independent of $x,\delta$ such that
     \begin{align}\label{D-L7-8}
        \left|\int_{\mathbb{R}^N \setminus B_r(x)}a(x,y)\frac{h_q(u_\epsilon(x)-u_\epsilon(y))}{\left(|x-y|+\delta\right)^{N+tq}}dy\right|\leq C_1.
    \end{align}
    From \eqref{D-L7-7} and \eqref{D-L7-8}, we obtain a constant $C_0+C_1$ independent of $\delta$ such that
    \begin{equation*}
        G_\delta(u_\epsilon)(x)\phi\geq -2(C_0+C_1)\phi \in L^1(K).
    \end{equation*}
    Therefore, using Fatou's lemma, 
    we deduce
    \begin{equation}\label{D-L7-10'}
        \int_K \liminf\limits_{\delta \rightarrow 0}G_\delta(u_\epsilon)\phi dx\leq \liminf\limits_{\delta \rightarrow 0}\int_K G_\delta(u_\epsilon)\phi dx .
    \end{equation} 
    Due to the odd symmetry of $h_q(\nabla(v_{a_x}(x)).(x-y))$ in $B_r(x)$, we get
    \begin{equation}\label{D-L7-10}
        G_\delta(u_\epsilon)(x)=2\int_{\mathbb{R}^N} a(x,y)\frac{h_q(u_\epsilon(x)-u_\epsilon(y))-h_q(\nabla(v_{a_x}(x)).(x-y))\chi_{B_r(x)}(y)}{(|x-y|+\delta)^{N+tq}}dy.
    \end{equation}
    Now, let $y\in B_r(x)$. Using \eqref{D-L7-3}, \eqref{D-L7-7.} and \eqref{D-L7-10}, we have
    \begin{align}\label{D-L7-11}
        a(x,y)&\frac{h_q(u_\epsilon(x)-u_\epsilon(y))-h_q(\nabla(v_{a_x}(x)).(x-y))\chi_{B_r(x)}(y)}{(|x-y|+\delta)^{N+tq}}\nonumber\\
        & \geq a(x,y)\frac{h_q(v_{a_x}(x)-v_{a_x}(y))-h_q(\nabla(v_{a_x}(x)).(x-y))}{(|x-y|+\delta)^{N+tq}}dy\nonumber\\
        &\geq -C\frac{\bigg(|\nabla v_z(x)\cdot(y-x)|+\omega_y|x-y|^2\bigg)^{p-2}\omega_y|x-y|^2}{|x-y|^{N+tq}}dy=f_1(x,y).
    \end{align}
    We get $f_1\in L^1(B_r(x))$ like the previous arguments. Now, for $y\in\mathbb{R}^N \setminus B_r(x)$, since $u_\epsilon$ is bounded, we have
    \begin{equation}\label{D-L7-11'}
        \left|a(x,y)\frac{h_q(u_\epsilon(x)-u_\epsilon(y))}{(|x-y|+\delta)^{N+tq}}\right|\leq C\frac{1+|u_\epsilon(y)|^{q-1}}{|x-y|^{N+tq}}=f_2(x,y)\in L^1(\mathbb{R}^N \setminus B_r(x)).
    \end{equation}
    Using \eqref{D-L7-11} and \eqref{D-L7-11'}, we get
    $$a(x,y)\frac{h_q(u_\epsilon(x)-u_\epsilon(y))}{(|x-y|+\delta)^{N+tq}}\geq \chi_{B_r(x)}(y)f_1(x,y)-\chi_{\mathbb{R}^N \setminus B_r(x)}(y)f_2(x,y) \in L^1(\mathbb{R}^N).$$
    Thus, using Fatou's lemma, we arrive at
    \begin{align}\label{D-L7-12}
        \liminf\limits_{\delta \rightarrow 0}G_\delta(u_\epsilon)(x)\geq 2\operatorname{P.V.}\int_{\mathbb{R}^N}a(x,y)\frac{h_q(u_\epsilon(x)-u_\epsilon(y))}{|x-y|^{N+tq}}dy.
    \end{align}
    From \eqref{D-L7-10'} and \eqref{D-L7-12}, we obtain
    \begin{equation}\label{D-L7-12'}
         \liminf\limits_{\delta \rightarrow 0}\int_K G_\delta(u_\epsilon)\phi dx \geq \int_K\left(2\operatorname{P.V.}\int_{\mathbb{R}^N}a(x,y)\frac{h_q(u_\epsilon(x)-u_\epsilon(y))}{|x-y|^{N+tq}}dy\right)\phi dx.
    \end{equation}
    Since $\frac{1}{|x-y|^{N+tq}}\geq \frac{1}{(|x-y|+\delta)^{N+tq}}$ for all $\delta>0$, using the Lipschitz continuity and the boundedness of $u_\epsilon$ and similar to the proof of \eqref{D-L6-26} in Lemma \ref{D-L6}, we get
    \begin{align}\label{D-L7-13}
        \lim\limits_{\delta \rightarrow 0}\iint_{Q(K)}a(x,y)&\frac{h_q(u_\epsilon(x)-u_\epsilon(y))(\phi(x)-\phi(y))}{(|x-y|+\delta)^{N+tq}}dydx \nonumber\\
        &=\iint_{Q(K)}a(x,y)\frac{h_q(u_\epsilon(x)-u_\epsilon(y))(\phi(x)-\phi(y))}{|x-y|^{N+tq}}dydx.
    \end{align}
    From \eqref{D-L7-2}, \eqref{D-L7-12'}, and \eqref{D-L7-13}, we deduce
    \begin{align*}
       \iint_{Q(K)}a(x,y)&h_q(u_\epsilon(x)-u_\epsilon(y))(\phi(x)-\phi(y))K_{t,q}(x,y)dydx\nonumber\\
       &\geq \frac{1}{\Lambda_2}\iint_{Q(K)}a(x,y)\frac{h_q(u_\epsilon(x)-u_\epsilon(y))(\phi(x)-\phi(y))}{|x-y|^{N+tq}}dydx\nonumber\\
       &\geq \frac{1}{\Lambda_2}\int_K\left(2\operatorname{P.V.}\int_{\mathbb{R}^N}a(x,y)\frac{h_q(u_\epsilon(x)-u_\epsilon(y))}{|x-y|^{N+tq}}dy\right)\phi dx\nonumber\\
       &\geq \int_K\bigg(2\operatorname{P.V.}\int_{\mathbb{R}^N}a(x,y)h_q(u_\epsilon(x)-u_\epsilon(y))K_{t,q}(x,y)dy\bigg)\phi dx.
    \end{align*}
    Hence, the proof is complete.
\end{proof}

\noindent Next, we show that given a {viscosity} supersolution $u$ of problem \eqref{D} where $f$ satisfies certain conditions, each infimal convolution $u_\epsilon$ of $u$ is a weak supersolution to the problem \eqref{D-L3-prob1}. 

\begin{lemma}\label{D-L5}
    Let $u$ be a bounded viscosity supersolution to problem \eqref{D} and $f=f(x,t,\zeta,\eta)$ be non-increasing in $t$ and continuous in $\Omega\times \mathbb{R}^3$. Then, $u_\epsilon$ is a weak supersolution to the problem \eqref{D-L3-prob1}.
\end{lemma}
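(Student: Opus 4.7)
The plan is to combine Lemma \ref{D-L3}, which already identifies $u_\epsilon$ as a viscosity supersolution of \eqref{D-L3-prob1} on $\Omega_{r(\epsilon)}$, with a pointwise--to--weak translation built from Lemmas \ref{D-L6} and \ref{D-L7} (for the weighted $(t,q)$-operator) together with their fractional $p$-Laplacian analogues taken from \cite{BM2021} (for the $(s,p)$-operator). I would first record the standard regularity of the infimal convolution: since $u$ is bounded, $u_\epsilon$ is locally Lipschitz and semiconcave in $\Omega_{r(\epsilon)}$, and by Alexandrov's theorem it is twice differentiable at almost every point of $\Omega_{r(\epsilon)}$. Furthermore $u_\epsilon \in L^{p-1}_{s,p}(\mathbb{R}^N) \cap L^{q-1}_{a,t,q}(\mathbb{R}^N)$, so $H_a(u_\epsilon,\phi)$ is well defined for every $\phi \in C_c^\infty(\Omega_{r(\epsilon)})$.

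The main step is to upgrade the viscosity information from Lemma \ref{D-L3} to the pointwise inequality
\begin{equation*}
L_a u_\epsilon(x_0) \geq f_\epsilon\bigl(x_0, u_\epsilon(x_0), D^p_s u_\epsilon(x_0), D^q_{a,t} u_\epsilon(x_0)\bigr) \quad \text{for a.e.\ } x_0 \in \Omega_{r(\epsilon)}.
\end{equation*}
At any point $x_0$ where $u_\epsilon$ admits a second-order Taylor expansion, the quadratic polynomial
\begin{equation*}
\psi_\sigma(y) = u_\epsilon(x_0) + \nabla u_\epsilon(x_0)\cdot(y-x_0) + \tfrac{1}{2}(y-x_0)^\top D^2 u_\epsilon(x_0)(y-x_0) - \sigma |y-x_0|^2,
\end{equation*}
suitably cut off on a ball $B_r(x_0)$ and extended by a lower function below $u_\epsilon$ outside, is a $C^2$ test function touching $u_\epsilon$ from below at $x_0$. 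The admissibility condition $4(a)$ of Definition \ref{D-VS} is automatic when $\nabla u_\epsilon(x_0)\neq 0$ or $p>\frac{2}{2-s}$; otherwise it can be restored via condition $4(b)$ by taking $\psi_\sigma$ in the class $C^2_\beta$ exactly as in \cite{BM2021}. Lemma \ref{D-L3} then gives $L_a\psi_\sigma(x_0) \geq f_\epsilon(x_0, \psi_\sigma(x_0), D^p_s \psi_\sigma(x_0), D^q_{a,t} \psi_\sigma(x_0))$, and letting $\sigma \to 0$ while using the continuity of $L_a$ on such test functions (Lemma 4.5 in \cite{FZ2023}), the continuity of $D^p_s$ (Lemma 2.6 in \cite{BM2021}) and $D^q_{a,t}$ (Lemma \ref{D-L1}), and the continuity of $f_\epsilon$ yields the desired pointwise inequality.

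Next I would integrate the pointwise inequality against an arbitrary $\phi \in C_c^\infty(\Omega_{r(\epsilon)})$ with $\phi \geq 0$ and $K=\operatorname{supp}\phi$. Splitting $L_a u_\epsilon = G_{s,p}(u_\epsilon) + G_{t,q,a}(u_\epsilon)$ into its two principal-value parts, Lemma \ref{D-L6} (when $q>\frac{2}{2-t}$) or Lemma \ref{D-L7} (when $1<q\leq \frac{2}{2-t}$) produces
\begin{equation*}
\int_K G_{t,q,a}(u_\epsilon)\phi\,dx \leq \iint_{Q(K)} a(x,y) h_q(u_\epsilon(x)-u_\epsilon(y))(\phi(x)-\phi(y))K_{t,q}(x,y)\,dy\,dx,
\end{equation*}
while the analogous unweighted result from \cite{BM2021} gives the same bound for $G_{s,p}(u_\epsilon)$ with $h_p$ and $K_{s,p}$. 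Adding the two estimates recovers precisely $H_a(u_\epsilon,\phi)$ on the right-hand side, so that
\begin{equation*}
H_a(u_\epsilon,\phi) \geq \int_{\Omega_{r(\epsilon)}} f_\epsilon\bigl(x, u_\epsilon, D^p_s u_\epsilon, D^q_{a,t} u_\epsilon\bigr)\phi\,dx,
\end{equation*}
which is the weak-supersolution inequality for \eqref{D-L3-prob1}.

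The delicate point is the pointwise-inequality step: one must produce a $C^2$ test function that touches $u_\epsilon$ from below at a.e.\ twice-differentiable $x_0$ and whose $L_a$-value approximates $L_a u_\epsilon(x_0)$ in the limit. The singular regimes $p\leq \frac{2}{2-s}$ and $1<q\leq \frac{2}{2-t}$ are the trickiest because the gradient of the test function may vanish at $x_0$, and for these the very purpose of the $C^2_\beta$ admissibility built into Definition \ref{D-VS} (and of the alternative construction in Lemma \ref{D-L7}) is to guarantee that the principal-value integrals still make sense and depend continuously on the test function. Once these pieces are in place the limiting argument and the integration against $\phi$ are routine.
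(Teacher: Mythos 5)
Your skeleton is the paper's: use Lemma \ref{D-L3} to see that $u_\epsilon$ is a viscosity supersolution of \eqref{D-L3-prob1}, exploit a.e.\ twice differentiability of $u_\epsilon$ to obtain a pointwise inequality $L_a u_\epsilon(x_0)\geq f_\epsilon(x_0,u_\epsilon(x_0),D^p_su_\epsilon(x_0),D^q_{a,t}u_\epsilon(x_0))$ a.e., and then integrate against a nonnegative $\phi\in C_c^\infty(\Omega_{r(\epsilon)})$ using Lemmas \ref{D-L6}--\ref{D-L7} for the weighted $(t,q)$-term and the $(s,p)$-analogues (Lemmas 3.4--3.5 in \cite{BM2021}). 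The integration step and its citations are correct.

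However, the limiting argument you sketch for the pointwise inequality has a genuine gap. You fix the cutoff ball $B_r(x_0)$ and send the Hessian perturbation $\sigma\to 0$. Two things go wrong. First, touching from below fails: the second-order expansion only gives $u_\epsilon(y)=P(y)+o(|y-x_0|^2)$, so $P(y)-\sigma|y-x_0|^2\leq u_\epsilon(y)$ holds only in a ball whose radius depends on $\sigma$ (and typically shrinks as $\sigma\to0$), not in a fixed $B_r(x_0)$. Second, and more importantly, even if one arranges the touching, with $r$ fixed the test function inside $B_r(x_0)$ converges as $\sigma\to0$ to the quadratic polynomial $P$, \emph{not} to $u_\epsilon$; thus $L_a\psi_\sigma(x_0)$, $D^p_s\psi_\sigma(x_0)$, $D^q_{a,t}\psi_\sigma(x_0)$ do not converge to the corresponding quantities for $u_\epsilon$. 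This mismatch shows up in your citations: Lemma 4.5 in \cite{FZ2023}, Lemma 2.6 in \cite{BM2021}, and Lemma \ref{D-L1} all treat the ``add a small $C^2$ bump to a fixed test function'' mode of stability (as used in the proof of Theorem \ref{D-T1}), whereas the convergence you actually need is ``shrink the region where the test function differs from $u_\epsilon$.'' The paper handles the latter by fixing a Hessian perturbation $\mu>0$ once (so the touching ball $B_r(x_0)$ is fixed), defining $\psi_{r'}=\phi$ in $B_{r'}(x_0)$ and $\psi_{r'}=u_\epsilon$ outside, and sending $r'\to0$: since $\psi_{r'}$ and $u_\epsilon$ are both Lipschitz near $x_0$, the discrepancies $D^p_s\psi_{r'}(x_0)-D^p_su_\epsilon(x_0)$ and $D^q_{a,t}\psi_{r'}(x_0)-D^q_{a,t}u_\epsilon(x_0)$ are integrals over $B_{r'}(x_0)$ with integrands of size $|x-x_0|^{q(1-t)-N}$ and thus vanish, and Lemma 4.3 (or 4.4 for the degenerate case) in \cite{FZ2023}, not Lemma 4.5, gives $L_a\psi_{r'}(x_0)\to L_a u_\epsilon(x_0)$. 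Finally, in the case $p\leq \frac{2}{2-s}$, $\nabla u_\epsilon(x_0)=0$ you cannot simply keep a generic quadratic: the paper switches to $\phi(x)=u(x_0)-\frac{|x_0-x|^l}{l\epsilon^{l-1}}$, which lies below $u_\epsilon$ by the very definition of infimal convolution and belongs to $C^2_l$, so the admissibility condition $4(b)$ and the convergence lemma for degenerate critical points both apply; your generic paraboloid does not obviously satisfy these requirements.
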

\begin{proof}
    By Lemma 3.1(iii) in \cite{BM2021}, $u_\epsilon$ is twice differentiable at almost every point in $\Omega_{r(\epsilon)}$. Choose an $x_0 \in \Omega_{r(\epsilon)}$, where $u_\epsilon$ is twice differentiable and $B_r(x_0)\subset \Omega_{r(\epsilon)}$. First consider the case $p>\frac{2}{2-s}$ or $\nabla u_\epsilon(x_0)\neq 0$. Then, by Lemma 3.1(iii) in \cite{BM2021}, choose $\mu>0$ such that the function
    \begin{equation*}
        \phi(x)=u_\epsilon(x_0)+\nabla u_\epsilon(x_0)\cdot (x-x_0)+\frac{1}{2}\left(D^2u_\epsilon(x_0)-\mu I\right)\cdot (x-x_0)^2
    \end{equation*}
    satisfies $\phi \in C_c^2(B_r(x_0))$, $\phi(x_0)=u_\epsilon(x_0)$ and $\phi \leq u_\epsilon$ in $B_r(x_0)$. Then for each $0<r'\leq r$, define
    \begin{equation}\label{D-L5-7}
        \psi_{r'}(x)=\begin{cases}
            \phi(x)&, \ \text{if } x\in B_{r'}(x_0),\\
            u_\epsilon(x)&, \ \text{if } x \in \mathbb{R}^N \setminus B_{r'}(x_0).
        \end{cases}
    \end{equation}
    Since $u_\epsilon$ is a viscosity supersolution to problem \eqref{D-L3-prob1} by Lemma \ref{D-L3}, we deduce
    \begin{align}\label{D-L5-1}
        L_a\psi_{r'}(x_0)\geq f_\epsilon(x_0,u_\epsilon(x_0),D_s^p \psi_{r'}(x_0), D_{a,t}^q \psi_{r'}(x_0)).
    \end{align}
    Since {$u_\epsilon(x)=u_\epsilon(x_0)+o(|x-x_0|)$ and $\psi_{r'}(x)=\psi_{r'}(x_0)+o(|x-x_0|)$} in $B_{r'}(x_0)$, we have
    \begin{align*}
        \Bigg|\int_{B_{r'}(x_0)}a(x,y)\frac{|\psi_{r'}(x_0)-\psi_{r'}(x)|^q-|u_\epsilon(x_0)-u_\epsilon(x)|^q}{|x_0-x|^{N+tq}}dx\Bigg| &\leq M\int_{B_{r'}(x_0)}\frac{C|x-x_0|^q}{|x_0-x|^{N+tq}}dx \\
        &\leq C(r')^{q-tq} \rightarrow 0 \text{ as } r'\rightarrow 0.
    \end{align*}
    Thus, we deduce
    \begin{align}\label{D-L5-2}
        D_{a,t}^q \psi_{r'}(x_0)&=\int_{\mathbb{R}^N}a(x,y)\frac{|\psi_{r'}(x_0)-\psi_{r'}(x)|^q}{|x_0-x|^{N+tq}}dx \nonumber\\
        &= D_{a,t}^q u_\epsilon(x_0) +\int_{B_{r'}(x_0)}a(x,y)\frac{|\psi_{r'}(x_0)-\psi_{r'}(x)|-|u_\epsilon(x_0)-u_\epsilon(x)|^q}{|x_0-x|^{N+tq}}dx\nonumber\\
        &\rightarrow D_{a,t}^q u_\epsilon(x_0) \text{ as } r' \rightarrow 0,
    \end{align}
    because {$u_\epsilon(x)=u_\epsilon(x_0)+o(|x-x_0|)$ and $\psi_{r'}(x)=\psi_{r'}(x_0)+o(|x-x_0|)$} in $B_{r'}(x_0)$. Similarly, we obtain
    \begin{equation}\label{D-L5-3}
         D_s^p\psi_{r'}(x_0)\rightarrow D_s^p u_\epsilon(x_0) \text{ as } r' \rightarrow 0.
    \end{equation}
    Since $f$ is continuous, from the definition of $f_\epsilon$ and using \eqref{D-L5-2}, \eqref{D-L5-3}, we get
    \begin{equation}\label{D-L5-4}
        \lim\limits_{r' \rightarrow 0}f_\epsilon(x_0,u_\epsilon(x_0),D_s^p \psi_{r'}(x_0), D_{a,t}^q \psi_{r'}(x_0)) \geq f_\epsilon(x_0,u_\epsilon(x_0),D_s^p u_\epsilon(x_0), D_{a,t}^q u_\epsilon(x_0)).
    \end{equation}
    Now, Lemma 4.3 in \cite{FZ2023} gives
    \begin{equation}\label{D-L5-5}
        L_a\psi_{r'}(x_0) \rightarrow L_a u_\epsilon(x_0) \text{ as } r' \rightarrow 0. 
    \end{equation}
    Therefore, taking $r' \rightarrow 0$ in \eqref{D-L5-1}, from \eqref{D-L5-4} and \eqref{D-L5-5}, we get
    \begin{equation}\label{D-L5-6}
        L_a u_\epsilon (x_0) \geq f_\epsilon(x_0,u_\epsilon(x_0),D_s^p u_\epsilon(x_0), D_{a,t}^q u_\epsilon(x_0)).
    \end{equation}
    Now assume that $p\leq \frac{2}{2-s}$ and $\nabla u_\epsilon(x_0)=0$. Then, the function
    \begin{equation*}
        \phi(x)=u(x_0)-\frac{|x_0-x|^l}{l\epsilon^{l-1}} 
    \end{equation*}
     satisfies $\phi \in C_l^2(B_r(x_0))$ and $\phi \leq u_\epsilon$ and $\phi(x_0) = u_\epsilon(x_0)$ for a sufficiently small $r>0$. Now, define $\psi_{r'}$ by \eqref{D-L5-7} for $0<r'\leq r$. Then, \eqref{D-L5-1} holds for $\psi_{r'}$ since $u_\epsilon$ is a viscosity supersolution to \eqref{D-L3-prob1}. Proceeding similarly to the previous arguments and using \eqref{D-L5-2}, \eqref{D-L5-3}, we deduce that \eqref{D-L5-4} holds for this $\phi_{r'}$. Now, by Lemma 4.4 in \cite{FZ2023}, we obtain that
     \begin{equation}\label{D-L5-8}
          L_a\psi_{r'}(x_0) \rightarrow L_a u_\epsilon(x_0) \text{ as } r' \rightarrow 0.
     \end{equation}
     Therefore, taking $r' \rightarrow 0$ in \eqref{D-L5-1}, using \eqref{D-L5-4} and \eqref{D-L5-8}, we get
     \begin{equation}\label{D-L5-9}
          L_a u_\epsilon (x_0) \geq f_\epsilon(x_0,u_\epsilon(x_0),D_s^p u_\epsilon(x_0), D_{a,t}^q u_\epsilon(x_0)).
     \end{equation}
     It follows from \eqref{D-L5-6} and \eqref{D-L5-9} that for a.e. $x \in \Omega_{r(\epsilon)}$,
     \begin{equation}\label{D-L5-9'}
         L_a u_\epsilon(x) \geq f_\epsilon(x,u_\epsilon(x),D_s^p u_\epsilon(x), D_{a,t}^q u_\epsilon(x)).
     \end{equation}
     Let $\psi\in C_c^\infty(\Omega_{r(\epsilon)})$ with $\operatorname{supp} \psi=K$ and $\psi\geq0$. Then, multiplying \eqref{D-L5-9'} by $\psi$, integrating over all $x\in \Omega_{r(\epsilon)}$ and applying Lemma 3.4, Lemma 3.5 in \cite{BM2021} and Lemma \ref{D-L6} and Lemma \ref{D-L7}, we obtain 
    $$H_a(u_\epsilon,\psi)\geq \int_K f(x,u_\epsilon, D_s^p u_\epsilon, D_{a,t}^q u_\epsilon)\psi dx.$$
    This proves the desired result.
\end{proof}
\noindent We conclude this section with the proof of Theorem \ref{D-T2}.
\medskip

\noindent {\it{\bf{Proof of Theorem} \ref{D-T2}.}}
    Throughout the proof, we use the following notations for simplicity. 
    \begin{align*}
        U_p(x,y)&=(u(x)-u(y))(K_{s,p}(x,y))^{\frac{1}{p}}, \hspace{1cm} U_{p,\epsilon}(x,y)=(u_\epsilon(x)-u_\epsilon(y))(K_{s,p}(x,y))^{\frac{1}{p}},\\
        V_p(x,y)&=h_p(u(x)-u(y))(K_{s,p}(x,y))^{\frac{p-1}{p}}, \hspace{0.3cm} V_{p,\epsilon}(x,y)=h_p(u_\epsilon(x)-u_\epsilon(y))(K_{s,p}(x,y))^{\frac{p-1}{p}},\\
        U_q(x,y)&=(u(x)-u(y))(T(x,y))^{\frac{1}{q}}, \hspace{1.5cm} U_{q,\epsilon}(x,y)=(u_\epsilon(x)-u_\epsilon(y))(T(x,y))^{\frac{1}{q}}, \\
        V_q(x,y)&=h_q(u(x)-u(y))(T(x,y))^{\frac{q-1}{q}}, \hspace{0.7cm} V_{q,\epsilon}(x,y)=h_q(u_\epsilon(x)-u_\epsilon(y))(T(x,y))^{\frac{q-1}{q}},
    \end{align*}
    where $T(x,y)=a(x,y)K_{t,q}(x,y)$. Let $v \in C_c^\infty(\Omega)$ with $v \geq 0$ and $\operatorname{supp}v =K$.  Choose $\epsilon>0$ such that $K \subset \Omega_{r(\epsilon)}$. Then, $v \in C_c^\infty(\Omega_{r(\epsilon)})$. Let $K_1$ be a compact subset of $\Omega_{r(\epsilon)}$ with $K\subset\subset K_1$. By Lemma \ref{D-L5}, we have
    \begin{equation}\label{D-T2-1}
        H_a(u_\epsilon,v)\geq \int_{K} f_\epsilon(x,u_\epsilon, D_s^p u_\epsilon, D_{a,t}^q u_\epsilon)v dx.
    \end{equation}
   Choose $\psi \in C_c^{\infty}(\Omega_{r(\epsilon)})$ with $\operatorname{supp}\psi=K_0$, $K_1 \subset\subset K_0$, $0\leq \psi \leq 1$ and $\psi \equiv 1$ in $K_1$. Also, choose an $\epsilon_0>0$. Then, by Lemma 3.1(ii) in \cite{BM2021}, for all $0<\epsilon<\epsilon_0$, we have $\operatorname{osc}u_\epsilon \leq \sup\limits_{\mathbb{R}^N}u -\inf\limits_{\mathbb{R}^N}u_{\epsilon_0}$. By Lemma \ref{D-L4}, for all $\epsilon< \epsilon_0$, we obtain a constant $C=C(p,q,K_0, \gamma_1, \gamma_2,\Lambda_1,\Lambda_2,h, u)$ such that
   \begin{align}\label{D-T2-2}
        \int_{K_1}& \int_{\mathbb{R}^N}\left(U_{p,\epsilon}(x,y)^p+U_{q,\epsilon}(x,y)^q\right)dydx \nonumber\\
        &\leq \int_{K_0} \int_{\mathbb{R}^N}U_{p,\epsilon}(x,y)^p\psi(x)^qdydx+\int_{K_0} \int_{\mathbb{R}^N} U_{q,\epsilon}(x,y)^q(x)\psi(x)^{q}dydx\nonumber\\
       &\leq C\bigg(\iint_{Q(K_0)} |\psi(x)-\psi(y)|^p K_{s,p}(x,y)dydx\nonumber\\
       &\quad \quad \ \ +\iint_{Q(K_0)}a(x,y) |\psi(x)-\psi(y)|^q K_{t,q}(x,y)dydx+1\bigg)\nonumber\\
       &=M_0.
   \end{align}
    Since $L^p(K_1 \times \mathbb{R}^N)$ and $L^q(K_1 \times \mathbb{R}^N)$ are reflexive Banach spaces for $1<p<\infty$, there exist $w_1\in L^p(K_1 \times \mathbb{R}^N)$, $w_2\in L^q(K_1 \times \mathbb{R}^N)$ such that the weak convergences $U_{p,\epsilon} \rightharpoonup w_1 $ in $L^p(K_1 \times \mathbb{R}^N)$ and $U_{q,\epsilon} \rightharpoonup w_2$ in $L^q(K_1 \times \mathbb{R}^N)$ hold. But by Lemma 3.1(ii) in \cite{BM2021}, we get that $U_{p,\epsilon} \rightarrow U_p $ and $V_{p,\epsilon} \rightarrow V_p $ a.e. in $K_1 \times \mathbb{R}^N$. Since the limit is unique, we have $w_1=U_p$ and $w_2=U_q$. Now, from \eqref{D-T2-2}, we also have
    \begin{align*}
        \|V_{p,\epsilon}\|_{L^{\frac{p}{p-1}}(K_1 \times \mathbb{R}^N)}=\|U_{p,\epsilon}\|_{L^p(K_1 \times \mathbb{R}^N)}^{p-1} \leq M_0^{\frac{p-1}{p}}\\
        \|V_{q,\epsilon}\|_{L^{\frac{q}{q-1}}(K_1 \times \mathbb{R}^N)}=\|U_{q,\epsilon}\|_{L^q(K_1 \times \mathbb{R}^N)}^{q-1} \leq M_0^{\frac{q-1}{q}}.
    \end{align*}
    Therefore, proceeding as above, we get that $V_{p,\epsilon} \rightharpoonup V_p$ in $L^\frac{p}{p-1}(K_1 \times \mathbb{R}^N)$ and $V_{q,\epsilon} \rightharpoonup V_q$ in $L^\frac{q}{q-1}(K \times \mathbb{R}^N)$. Due to the odd symmetry of $V_{p,\epsilon}, U_{p,\epsilon}, V_{q,\epsilon}, U_{q,\epsilon}$, similarly, we also obtain that $U_{l,\epsilon}\rightharpoonup U_l$ in $L^l((\mathbb{R}^N \setminus K_1) \times K_1)$ and $V_{l,\epsilon}\rightharpoonup V_l$ in $L^{\frac{l}{l-1}}((\mathbb{R}^N \setminus K_1) \times K_1)$ for $l\in \{p,q\}$. Since $Q(K_1)=(K_1 \times \mathbb{R}^N)\cup ((\mathbb{R}^N \setminus K_1) \times K_1)$, we get
    \begin{equation}\label{D-T2-3}
        H_a(u_\epsilon,v)\rightarrow H_a(u,v) \text{ as } \epsilon\rightarrow 0.
    \end{equation}
    Now, choose a function $\psi_0\in C_c^{\infty}(\Omega)$ with $\psi_0 \equiv1$ in $K$ and $\operatorname{supp}\psi_0=K_1$. Then, define $w=(u-u_\epsilon)\psi_0$. By Lemma 3.1(ii) in \cite{BM2021}, $w\geq0$. Thus, \eqref{D-T2-1} gives
    \begin{equation}\label{D-T2-5}
        H_a(u_\epsilon,w)-\int_{K_1} f_\epsilon(x,u_\epsilon, D_s^p u_\epsilon, D_{a,t}^q u_\epsilon)w dx \geq 0.
    \end{equation}
    Choose $m=\max\{\|u_\epsilon\|_{L^{\infty}(K_0)}:\epsilon>0\}$.  Then, since $\gamma_1,\gamma_2$ and $h$ are continuous, by \eqref{D-T2-f} and \eqref{D-f_ep}, we get a constant $C>0$ such that
    \begin{align}\label{D-T2-4}
        -\int_{K_1} f_\epsilon(x,u_\epsilon, D_s^p u_\epsilon, D_{a,t}^q u_\epsilon)w dx &\leq \int_{K_1} |f(x,u_\epsilon, D_s^p u_\epsilon, D_{a,t}^q u_\epsilon)|w dx\nonumber\\
        &\leq C\int_{K_1} \left( |D_s^p u_\epsilon|^{\frac{p-1}{p}}w+|D_{a,t}^q u_\epsilon|^{\frac{q-1}{q}}w+w\right)dx.
    \end{align}
    Since $u,u_\epsilon,\psi_0$ are bounded in $K_0$ and $K_0$ is compact, by the dominated convergence theorem,
    $$\lim\limits_{\epsilon \rightarrow 0}\int_{K_1}|w|^l dx = 0 \text{ for all } l>0.$$
    Thus, using H\"older's inequality in the first two integrals in the RHS of \eqref{D-T2-4}, and from \eqref{D-T2-2}, we deduce
    \begin{align}\label{D-T2-6}
        -\int_{K_1} f_\epsilon&(x,u_\epsilon, D_s^p u_\epsilon, D_{a,t}^q u_\epsilon)w dx\nonumber\\
        &\leq C(\Lambda_1)\left(\int_{K_1} \int_{\mathbb{R}^N}|u_\epsilon(x)-u_\epsilon(y)|^p K_{s,p}(x,y)dydx\right)^{\frac{p-1}{p}}\left(\int_{K_1}|w|^p dx\right)^{\frac{1}{p}}\nonumber\\
        &\ \ \ +C(\Lambda_2)\left(\int_{K_1} \int_{\mathbb{R}^N} a(x,y) |u_\epsilon(x)-u_\epsilon(y)|^q K_{t,q}(x,y)dydx \right)^{\frac{q-1}{q}}\left(\int_{K_1}|w|^q dx\right)^{\frac{1}{q}}\nonumber\\
        &\ \ \ +\int_{K_1}|w| dx \rightarrow0 \text{ as } \epsilon \rightarrow 0.
    \end{align}
    From the definition of $w$, we have
    $$w(x)-w(y)=\bigg(u(x)-u(y)-(u_\epsilon(x)-u_\epsilon(y))\bigg)\psi_0(x)+\bigg(u(y)-u_\epsilon(y)\bigg)\bigg(\psi_0(x)-\psi_0(y)\bigg).$$
    Therefore, \eqref{D-T2-5} and \eqref{D-T2-6} gives
    \begin{equation}\label{D-T2-7}
        \lim\limits_{\epsilon \rightarrow 0}H_a(u_\epsilon,w)=\lim\limits_{\epsilon \rightarrow 0}(I_{1,p}+I_{2,p}+I_{3,p}+I_{1,q}+I_{2,q}+I_{3,q})\geq 0, 
    \end{equation}
    where 
    \begin{align*}
        I_{1,p}&=\iint_{Q(K_1)}h_p(u(x)-u(y))(u(x)-u(y)-(u_\epsilon(x)-u_\epsilon(y)))\psi_0(x)K_{s,p}(x,y)dydx\\
        &=\int_{K_1}\int_{\mathbb{R}^N}h_p(u(x)-u(y))(u(x)-u(y)-(u_\epsilon(x)-u_\epsilon(y)))\psi_0(x)K_{s,p}(x,y)dydx,\\
        I_{2,p}&=\iint_{Q(K_1)}\bigg(h_p(u_\epsilon(x)-u_\epsilon(y))-h_p(u(x)-u(y))\bigg)\nonumber\\
        &\hspace{2cm}\times(u(x)-u(y)-(u_\epsilon(x)-u_\epsilon(y)))\psi_0(x)K_{s,p}(x,y)dydx\\
        &=\int_{K_1}\int_{\mathbb{R}^N}\bigg(h_p(u_\epsilon(x)-u_\epsilon(y))-h_p(u(x)-u(y))\bigg)\nonumber\\
        &\hspace{2cm}\times(u(x)-u(y)-(u_\epsilon(x)-u_\epsilon(y)))\psi_0(x)K_{s,p}(x,y)dydx,\\
        I_{3,p}&=\iint_{Q(K_1)}h_p(u_\epsilon(x)-u_\epsilon(y))(u(y)-u_\epsilon(y))(\psi_0(x)-\psi_0(y)K_{s,p}(x,y)dydx,\\
        I_{1,q}&=\iint_{Q(K_1)}a(x,y)h_q(u(x)-u(y))(u(x)-u(y)-(u_\epsilon(x)-u_\epsilon(y)))\psi_0(x)K_{t,q}(x,y)dydx\\
        &=\int_{K_1}\int_{\mathbb{R}^N}a(x,y)h_q(u(x)-u(y))(u(x)-u(y)-(u_\epsilon(x)-u_\epsilon(y)))\psi_0(x)K_{t,q}(x,y)dydx,\\
        I_{2,q}&=\int_{K_1}\int_{\mathbb{R}^N}a(x,y)\bigg(h_q(u_\epsilon(x)-u_\epsilon(y))-h_q(u(x)-u(y))\bigg)\nonumber\\
        &\hspace{2cm}\times(u(x)-u(y)-(u_\epsilon(x)-u_\epsilon(y)))\psi_0(x)K_{t,q}(x,y)dydx,\\
        I_{3,q}&=\iint_{Q(K_1)}a(x,y)h_q(u_\epsilon(x)-u_\epsilon(y))(u(y)-u_\epsilon(y))(\psi_0(x)-\psi_0(y))K_{t,q}(x,y)dydx.
    \end{align*}
    Let $\|u\|_{L^{\infty}(K_0)} \leq m$. By Remark 3.2 in \cite{BM2021}, $\|u_\epsilon\|_{L^{\infty}(K_0)} \leq m$. Using H\"older's inequality and \eqref{D-T2-2}, we have
    \begin{align}\label{D-T2-8}
        |I_{3,q}|&\leq \left(\iint_{Q(K_1)}a(x,y) |u_\epsilon(x)-u_\epsilon(y)|^q K_{t,q}(x,y)dydx\right)^{\frac{q-1}{q}}\nonumber\\
        &\ \ \ \times\left(\iint_{Q(K_1)}a(x,y)|u(y)-u_\epsilon(y)|^q|\psi_0(x)-\psi_0(y)|^qK_{t,q}(x,y)dydx\right)^{\frac{1}{q}}\nonumber\\
        &\leq 2^{\frac{q-1}{q}}\left(\int_{K_1} \int_{\mathbb{R}^N} a(x,y) |u_\epsilon(x)-u_\epsilon(y)|^q K_{t,q}(x,y)dydx\right)^{\frac{q-1}{q}}\nonumber\\
        &\ \ \ \times\left(\iint_{Q(K_1)}a(x,y)|u(y)-u_\epsilon(y)|^q|\psi_0(x)-\psi_0(y)|^qK_{t,q}(x,y)dydx\right)^{\frac{1}{q}}\nonumber\\
        &\leq C\left(\iint_{Q(K_1)}a(x,y)|u(y)-u_\epsilon(y)|^q|\psi_0(x)-\psi_0(y)|^qK_{t,q}(x,y)dydx\right)^{\frac{1}{q}}.
    \end{align}
    Since $u_\epsilon(y)-u(y) \rightarrow 0$ pointwise for every $y\in \mathbb{R}^N$ and
    $$\left|a(x,y)|u(y)-u_\epsilon(y)|^q|\psi_0(x)-\psi_0(y)|^qK_{t,q}(x,y)\right|\leq (2m)^qa(x,y)|\psi_0(x)-\psi_0(y)|^qK_{t,q}(x,y)$$
    which is integrable on $Q(K_1)$, using dominated convergence theorem in \eqref{D-T2-8}, we get
    $I_{3,q} \rightarrow 0 \text{ as } \epsilon \rightarrow 0.$
    Now, $V_q \in L^{\frac{q}{q-1}}(K_1\times \mathbb{R}^N)$ and $V_{q, \epsilon} \rightharpoonup V_q$ in $L^{\frac{q}{q-1}}(K_1\times \mathbb{R}^N)$. 
    Therefore, using H\"older's inequality in $I_{1,q}$ and proceeding similarly to \eqref{D-T2-8}, we deduce that $I_{1,q} \rightarrow 0 \text{ as } \epsilon \rightarrow 0.$
    Following a similar way, we also obtain
    $I_{1,p} \rightarrow 0$ and $I_{3,p} \rightarrow 0$ as $\epsilon \rightarrow 0.$ Subsequently, \eqref{D-T2-7} becomes
    \begin{equation}\label{D-T2-13}
        \lim\limits_{\epsilon\rightarrow 0}(I_{2,p}+I_{2,q})\geq 0.
    \end{equation}
    For $t_1,t_2 \in \mathbb{R}$, consider the algebraic inequalities
    \begin{align}
        |t_1-t_2|^2&\leq C\left(h_l(t_1)-h_l(t_2)\right)(t_1-t_2)\left(|t_1|+|t_2|\right)^{2-l}, \ 1<l<2,\label{D-T2-14}\\
        |t_1-t_2|^l&\leq 2^{1-l}\left(h_l(t_1)-h_l(t_2)\right)(t_1-t_2),\  l\geq 2.\label{D-T2-15}
    \end{align}
    When $1<q<2$, using \eqref{D-T2-14} with $l=q$, $t_1=u(x)-u(y)$ and $t_2=u_\epsilon(x)-u_\epsilon(y)$, we obtain
    \begin{align}\label{D-T2-16}
        0&\leq \frac{\left|u(x)-u(y)-(u_\epsilon(x)-u_\epsilon(y))\right|^2}{\left(|u(x)-u(y)|+|u_\epsilon(x)-u_\epsilon(y)|\right)^{2-q}} \nonumber\\
        &\leq C\left(h_q(u(x)-u(y))-h_q(u_\epsilon(x)-u_\epsilon(y))\right)\left(u(x)-u(y)-(u_\epsilon(x)-u_\epsilon(y)\right).
    \end{align}
    From \eqref{D-T2-16} and since $\psi_0(x)\geq 0$ for all $x \in \mathbb{R}^N$, we get that $I_{2,q}\leq 0$. For $q\geq 2$, using \eqref{D-T2-15} with $l=q$, $t_1=u(x)-u(y)$ and $t_2=u_\epsilon(x)-u_\epsilon(y)$, we have
    \begin{align}\label{D-T2-17}
        0&\leq 2^{l-1}\left|u(x)-u(y)-(u_\epsilon(x)-u_\epsilon(y))\right|^q \nonumber\\
        &\leq \left(h_q(u(x)-u(y))-h_q(u_\epsilon(x)-u_\epsilon(y))\right)\left(u(x)-u(y)-(u_\epsilon(x)-u_\epsilon(y)\right).
    \end{align}
    Hence, $I_{2,q}\leq 0$. Similarly, we obtain that $I_{2,p}\leq 0$ for all $1<p<\infty$. Thus, from \eqref{D-T2-13}, we get
    \begin{equation*}
        \lim\limits_{\epsilon\rightarrow 0} I_{2,p}=\lim\limits_{\epsilon\rightarrow 0} I_{2,q}=0.
    \end{equation*}
    Now, assume that $1<q<2$. Using H\"older's inequality, and applying \eqref{D-T2-2}, \eqref{D-T2-16}, we deduce
    \begin{align}\label{D-T2-19}
        0&\leq \int_{K}\int_{\mathbb{R}^N}a(x,y)\left|u(x)-u(y)-(u_\epsilon(x)-u_\epsilon(y))\right|^q K_{t,q}(x,y)dydx\nonumber\\
        &=\int_{K}\int_{\mathbb{R}^N}a(x,y)\frac{\left|u(x)-u(y)-(u_\epsilon(x)-u_\epsilon(y))\right|^q}{\left(|u(x)-u(y)|+|u_\epsilon(x)-u_\epsilon(y)|\right)^{\frac{(2-q)q}{2}}}\nonumber\\
        & \hspace{2cm}\times\left(|u(x)-u(y)|+|u_\epsilon(x)-u_\epsilon(y)|\right)^{\frac{(2-q)q}{2}}K_{t,q}(x,y)(x,y)dydx\nonumber\\
        &\leq \left(\int_{K}\int_{\mathbb{R}^N}a(x,y)\frac{\left|u(x)-u(y)-(u_\epsilon(x)-u_\epsilon(y))\right|^2}{\left(|u(x)-u(y)|+|u_\epsilon(x)-u_\epsilon(y)|\right)^{(2-q)}}K_{t,q}(x,y)(x,y)dydx \right)^{\frac{q}{2}}\nonumber\\
        & \ \ \ \times \left(\int_{K}\int_{\mathbb{R}^N}a(x,y)\left(|u(x)-u(y)|+|u_\epsilon(x)-u_\epsilon(y)|\right)^q K_{t,q}(x,y)(x,y)dydx\right)^{\frac{2-q}{2}}\nonumber\\
        &\leq C\bigg(\int_{K}\int_{\mathbb{R}^N}a(x,y)\left(h_q(u(x)-u(y))-h_q(u_\epsilon(x)-u_\epsilon(y))\right)\nonumber\\
        & \ \ \ \hspace{2cm}\times\left(u(x)-u(y)-(u_\epsilon(x)-u_\epsilon(y)\right)K_{t,q}(x,y)(x,y)dydx \bigg)^{\frac{q}{2}}\nonumber\\ 
        &\leq C\bigg(\int_{K_1}\int_{\mathbb{R}^N}a(x,y)\left(h_q(u(x)-u(y))-h_q(u_\epsilon(x)-u_\epsilon(y))\right)\nonumber\\
        & \ \ \ \hspace{2cm}\times\left(u(x)-u(y)-(u_\epsilon(x)-u_\epsilon(y)\right)\psi_0(x)K_{t,q}(x,y)(x,y)dydx \bigg)^{\frac{q}{2}}\nonumber\\
        &=C(-I_{2,q})^{\frac{q}{2}} \rightarrow 0 \text{ as }\epsilon\rightarrow 0.
    \end{align}
    For the case $q\geq 2$, using \eqref{D-T2-17}, we have
    \begin{align}\label{D-T2-20}
        0&\leq \int_{K}\int_{\mathbb{R}^N}a(x,y)\left|u(x)-u(y)-(u_\epsilon(x)-u_\epsilon(y))\right|^q K_{t,q}(x,y)dydx\nonumber\\
        &\leq C \int_{K}\int_{\mathbb{R}^N}a(x,y)\left(h_q(u(x)-u(y))-h_q(u_\epsilon(x)-u_\epsilon(y))\right)\nonumber\\
        &\hspace{1.9cm} \times \left(u(x)-u(y)-(u_\epsilon(x)-u_\epsilon(y)\right)K_{t,q}(x,y)dydx \nonumber\\
        &\leq C \int_{K_1}\int_{\mathbb{R}^N}a(x,y)\left(h_q(u(x)-u(y))-h_q(u_\epsilon(x)-u_\epsilon(y))\right)\nonumber\\
        &\hspace{1.9cm} \times \left(u(x)-u(y)-(u_\epsilon(x)-u_\epsilon(y)\right)\psi_0(x)K_{t,q}(x,y)dydx \nonumber\\
        &= -C I_{2,q} \rightarrow 0 \text{ as }\epsilon\rightarrow 0.
    \end{align}
    Similarly, we get that for all $1<p<\infty$
    \begin{equation}\label{D-T2-21}
        \lim\limits_{\epsilon\rightarrow 0}\int_{K}\int_{\mathbb{R}^N}\left|u(x)-u(y)-(u_\epsilon(x)-u_\epsilon(y))\right|^p K_{s,p}(x,y)dydx=0.
    \end{equation}
    From \eqref{D-T2-19}, \eqref{D-T2-20} and \eqref{D-T2-21}, we have
    \begin{align*}
        \lim\limits_{\epsilon\rightarrow 0}&\bigg(\int_{K}\int_{\mathbb{R}^N}\left|u(x)-u(y)-(u_\epsilon(x)-u_\epsilon(y))\right|^p K_{s,p}(x,y)dydx \nonumber\\
        &\int_{K}\int_{\mathbb{R}^N}a(x,y)\left|u(x)-u(y)-(u_\epsilon(x)-u_\epsilon(y))\right|^q K_{t,q}(x,y)dydx \bigg)=0.
    \end{align*}
    Consequently, by lemma \ref{D-L2}, we have
    \begin{equation}\label{D-T2-22}
        \lim\limits_{\epsilon\rightarrow 0}\int_K f_\epsilon(x,u_\epsilon, D_s^p u_\epsilon, D_{a,t}^q u_\epsilon)v dx=\int_K f(x,u, D_s^p u, D_{a,t}^q u)v dx.
    \end{equation}
    Hence, applying the limit as $\epsilon\rightarrow 0$ in \eqref{D-T2-1} and using \eqref{D-T2-3}, and \eqref{D-T2-22}, we get
    \begin{equation*}
        H_a(u,v)\geq \int_K f(x,u, D_s^p u, D_{a,t}^q u)v dx.
    \end{equation*}
    This proves the desired result.
\hfill\qedsymbol{}

\section{Local Boundedness of weak solutions}\label{D-S6}
In this section, we make use of the double phase De Giorgi classes to prove the local boundedness of weak solutions to \eqref{D-1}, where the function $f=f(x,t)$ is a locally bounded function which satisfies \eqref{D-f-H}. For simplicity, throughout this section, we denote $B_r(0)=B_r$. Also, given $0<\alpha<1\leq l$, we use the notation $l_\alpha^*=\frac{Nl}{N-\alpha l}$. We first prove the following result which is important to prove Lemma \ref{D-L8}.
\begin{lemma}\label{D-L10}
Let $0<r<R$ and $P:[r,R]\rightarrow [0,\infty)$ be a function satisfying the following conditions:
\begin{itemize}
    \item $P(.)$ is bounded.
    \item There exist constants $0<\beta<1,\ D_i\geq 0, \ i\in\{0,1,2,3,4\}$, and $\alpha_i>0, \ i\in\{1,2,3,4\}$ such that for every $r\leq r'<r''\leq R$,
    \begin{equation}\label{D-L10-1}
        P(r')\leq \beta P(r'')+D_0+\sum\limits_{i=1}^4 \frac{D_i}{(r''-r')^{\alpha_i}}.
    \end{equation}
    Then, we get a constant $C=C(\alpha_1,\alpha_2,\alpha_3,\alpha_4)\geq 1$ such that
    \begin{equation}\label{D-L10-2}
        P(r)\leq C\left(D_0+\sum\limits_{i=1}^4 \frac{D_i}{(R-r)^{\alpha_i}}\right).
    \end{equation}
\end{itemize}
\end{lemma}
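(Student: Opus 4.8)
The plan is to run the standard iteration (hole-filling) argument. First I would fix a parameter $\lambda\in(0,1)$, to be chosen at the end, and introduce the increasing sequence of radii $\rho_0=r$ and $\rho_j=r+(1-\lambda^j)(R-r)$ for $j\ge 1$. Then $\rho_j\uparrow R$, $\rho_j\in[r,R]$ for all $j$, and $\rho_{j+1}-\rho_j=(1-\lambda)\lambda^j(R-r)$.

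Next I would apply the hypothesis \eqref{D-L10-1} with $r'=\rho_j$ and $r''=\rho_{j+1}$, giving
\[
P(\rho_j)\le \beta\,P(\rho_{j+1})+D_0+\sum_{i=1}^4\frac{D_i}{(1-\lambda)^{\alpha_i}\lambda^{j\alpha_i}(R-r)^{\alpha_i}}.
\]
Iterating this $n$ times starting from $j=0$ yields
\[
P(r)\le \beta^n P(\rho_n)+\sum_{j=0}^{n-1}\beta^j D_0+\sum_{i=1}^4\frac{D_i}{(1-\lambda)^{\alpha_i}(R-r)^{\alpha_i}}\sum_{j=0}^{n-1}\bigl(\beta\lambda^{-\alpha_i}\bigr)^j.
\]

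Now I would choose $\lambda$. Since $0<\beta<1$ we have $\beta^{1/\alpha_i}<1$ for each $i\in\{1,2,3,4\}$, so any $\lambda$ with $\max_{1\le i\le 4}\beta^{1/\alpha_i}<\lambda<1$ satisfies $\mu_i:=\beta\lambda^{-\alpha_i}<1$ for every $i$. With such a $\lambda$, all five geometric series $\sum_j\beta^j$ and $\sum_j\mu_i^j$ converge, and since $P$ is bounded, $\beta^n P(\rho_n)\to 0$ as $n\to\infty$. Letting $n\to\infty$ therefore gives
\[
P(r)\le \frac{D_0}{1-\beta}+\sum_{i=1}^4\frac{D_i}{(1-\lambda)^{\alpha_i}(1-\mu_i)(R-r)^{\alpha_i}},
\]
which is \eqref{D-L10-2} with $C=\max\bigl\{\tfrac{1}{1-\beta},\ \max_i\tfrac{1}{(1-\lambda)^{\alpha_i}(1-\mu_i)}\bigr\}\ge 1$; since $\lambda$ and the $\mu_i$ were fixed in terms of $\beta$ and $\alpha_1,\dots,\alpha_4$ only, $C$ depends only on these quantities.

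There is no real obstacle here: the only point requiring care is picking a single $\lambda<1$ that simultaneously tames all four exponents $\alpha_i$ (hence the $\max$ over $\beta^{1/\alpha_i}$), and invoking the boundedness of $P$ to discard the residual term $\beta^n P(\rho_n)$; everything else is bookkeeping of convergent geometric series. If one wishes to match the stated dependence $C=C(\alpha_1,\alpha_2,\alpha_3,\alpha_4)$ exactly, one simply notes that in the intended application $\beta$ is a fixed absolute constant.
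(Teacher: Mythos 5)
Your proof is correct and follows essentially the same iteration scheme as the paper: the same geometric sequence of radii $\rho_j = r + (1-\lambda^j)(R-r)$, the same telescoping of \eqref{D-L10-1}, and the same choice of $\lambda$ so that $\beta\lambda^{-\alpha_i} < 1$ for all $i$, with boundedness of $P$ used to kill $\beta^n P(\rho_n)$. Your parenthetical observation that $C$ actually also depends on $\beta$ is correct — the paper's stated dependence $C(\alpha_1,\dots,\alpha_4)$ is slightly imprecise, though harmless since $\beta$ is fixed in the application.
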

\begin{proof}
    Let $r_0=r$ and define $$r_j=r_{j-1}+\rho^{j-1}(1-\rho )(R-r),$$
    where the constant $0<\rho<1$ will be chosen later. Then, $(r_j)$ is an increasing sequence with $r_j\rightarrow R$ as $j\rightarrow \infty$. Let us take $\alpha_0=0$.\\
    \textbf{Claim: }For any $n\in\{0,1,2,...\}$. we have
    \begin{align}\label{D-L10-3}
        P(r)\leq \beta^n P(r_n)+\sum\limits_{i=0}^4\frac{D_i\Bigg(1-\bigg(\frac{\beta}{\rho^{\alpha_i}}\bigg)^n\Bigg)}{(1-\rho)^{\alpha_i}(R-r)^{\alpha_i}\Bigg(1-\bigg(\frac{\beta}{\rho^{\alpha_i}}\bigg)\Bigg)}.
    \end{align}
    We proceed by induction. For $n=0$, the inequality \eqref{D-L10-3} trivially holds. Now, assume that the claim holds for $n=k-1\geq 0$. Thus, we have
    \begin{equation}\label{D-L10-4}
        P(r)\leq \beta^{k-1} P(r_{k-1})+\sum\limits_{i=0}^4\frac{D_i\Bigg(1-\bigg(\frac{\beta}{\rho^{\alpha_i}}\bigg)^{k-1}\Bigg)}{(1-\rho)^{\alpha_i}(R-r)^{\alpha_i}\Bigg(1-\bigg(\frac{\beta}{\rho^{\alpha_i}}\bigg)\Bigg)}.
    \end{equation}
    Now, since $r_{k-1}\leq r_{k}$ and $r_k-r_{k-1}=\rho^{k-1}(1-\rho)(R-r)$, by \eqref{D-L10-1}, we have
    \begin{equation}\label{D-L10-5}
        P(r_{k-1})\leq \beta P(r_{k})+D_0+\sum\limits_{i=1}^4 \frac{D_i}{(\rho^{k-1}(1-\rho)(R-r))^{\alpha_i}}.
    \end{equation}
    Substituting \eqref{D-L10-5} in \eqref{D-L10-4}, we obtain
    \begin{align*}
        P(r)&\leq \beta^{k-1}\left(\beta P(r_{k})+\sum\limits_{i=0}^4 \frac{D_i}{(\rho^{(k-1){\alpha_i}}(1-\rho)^{\alpha_i}(R-r)^{\alpha_i}}\right)\\
        &\hspace{1.5cm} +\sum\limits_{i=0}^4\frac{D_i\Bigg(1-\bigg(\frac{\beta}{\rho^{\alpha_i}}\bigg)^{k-1}\Bigg)}{(1-\rho)^{\alpha_i}(R-r)^{\alpha_i}\Bigg(1-\bigg(\frac{\beta}{\rho^{\alpha_i}}\bigg)\Bigg)}\\
        &=\beta^k P(r_{k}) + \sum\limits_{i=0}^4 \frac{D_i}{(1-\rho)^{\alpha_i}(R-r)^{\alpha_i}}\left[\left(\frac{\beta}{\rho^{\alpha_i}}\right)^{k-1}+\frac{\Bigg(1-\bigg(\frac{\beta}{\rho^{\alpha_i}}\bigg)^{k-1}\Bigg)}{\Bigg(1-\bigg(\frac{\beta}{\rho^{\alpha_i}}\bigg)\Bigg)}\right]\\
        &=\beta^k P(r_{k}) + \sum\limits_{i=0}^4 \frac{D_i}{(1-\rho)^{\alpha_i}(R-r)^{\alpha_i}}\frac{\Bigg(1-\bigg(\frac{\beta}{\rho^{\alpha_i}}\bigg)^{k}\Bigg)}{\Bigg(1-\bigg(\frac{\beta}{\rho^{\alpha_i}}\bigg)\Bigg)}.
    \end{align*}
    Thus, the claim holds for all $n \in \{0,1,2,3,...\}$. Choose $\rho$ such that $\frac{\beta}{\rho^{\alpha_i}}<1$ for each $i\in {1,2,3,4}$. Then, by applying $n \rightarrow \infty$ in \eqref{D-L10-3} and taking
    $$C\geq \max\left\{\frac{1}{(1-\rho)^{\alpha_i}\Bigg(1-\bigg(\frac{\beta}{\rho^{\alpha_i}}\bigg)\Bigg)}: i\in \{1,2,3,4\}\right\},$$ 
    we arrive at the inequality \eqref{D-L10-2}.
\end{proof}

The next lemma establishes that each weak solution to problem \eqref{D-1} where $f$ satisfies \eqref{D-f-H} lies in a double phase De Giorgi class.

\begin{lemma}\label{D-L8}
    Let $u\in W^{t,q}(\Omega)$ be a weak subsolution to {problem} \eqref{D-1} where $f$ satisfies \eqref{D-f-H} for some constants $c_1,c_2\geq0$ and $0\in(1,q_t^*)$. Then, there exists $\tilde{R}>0, \ \tilde{k}\in[-\infty,1], \ \epsilon\in (0,\frac{tq}{N}]$ and a constant $\tilde{C}=C(N,p,q,s,t,\Lambda_1,\Lambda_2,c_2)\geq1$ such that the following holds:
    \begin{enumerate}[(i)]
        \item For $c_2=0$, $u\in \operatorname{DG}_a^+(\Omega, c_1^\frac{1}{q-1}, \tilde{C}, -\infty, \frac{tq}{N}, \frac{tq}{q-1}, \infty)$.
        \item For $c_2>0$ and $1<l\leq q$, $u\in \operatorname{DG}_a^+(\Omega, c_1^\frac{1}{q-1}, \tilde{C}, 1, \frac{tq}{N}, \frac{tq}{q-1}, 1)$. 
        \item For $c_2>0$ and $q<l<q_t^*$, $u\in \operatorname{DG}_a^+(\Omega, c_1^\frac{1}{q-1}, \tilde{C}, 0, 1-\frac{l}{q_\alpha^*}, \frac{tq}{q-1}, \tilde{R})$, where $0<\alpha\leq t$ is fixed and $\tilde{R}=\tilde{R}(N,q,t,\Lambda_2,c_2,\|u\|_{L^{q_\alpha^*}(\Omega)})$ for a fixed $0<\alpha\leq t$.
    \end{enumerate}
    Moreover, the same conclusion holds for $u$ and $\operatorname{DG}_a^-$ if $u$ is a weak supersolution to problem \eqref{D-1}.
\end{lemma}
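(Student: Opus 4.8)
The plan is to test the weak formulation \eqref{D-WSE} with a suitable truncated test function and extract the De Giorgi-type energy inequality \eqref{D-DG}. Fix $x_0 \in \Omega$ and $0 < r < R \le \max\{\operatorname{dist}(x_0,\partial\Omega),\tilde R\}$. Let $\tau \in C_c^\infty(B_R(x_0))$ be a cutoff with $\tau \equiv 1$ on $B_r(x_0)$, $0 \le \tau \le 1$, $|\nabla\tau| \lesssim (R-r)^{-1}$. For $k \ge \tilde k$ write $w = (u-k)_+$ and test with $\varphi = w\,\tau^q$ (this is admissible after the standard density argument, since $u \in W^{s,p}(\mathbb{R}^N) \cap W^{t,q}_{\text{loc}}$). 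The left side $H_a(u,\varphi)$ splits into a $p$-part and a $q$-part; for each, I would use the by-now-standard pointwise algebraic estimates (the discrete Leibniz-type inequalities, e.g. as in \cite{DKP2016, BOS2022}) to bound below the local seminorm energies $[w]^p_{W_{s,p}(B_r(x_0))} + [w]^q_{W_{t,q,a}(B_r(x_0))}$ plus the nonlocal cross term $\int_{B_r}\int_{B_{2\tilde R}} \psi_+(x)\big(\psi_-(y)^{p-1}|x-y|^{-N-sp} + a(x,y)\psi_-(y)^{q-1}|x-y|^{-N-tq}\big)\,dy\,dx$, at the cost of error terms that, after Young's inequality with a small parameter $\delta$, are absorbed into the left side or reorganized into the cutoff terms $\frac{R^{(1-s)p}}{(R-r)^p}\|w\|^p_{L^p(B_R)} + \frac{R^{(1-t)q}}{(R-r)^q}\|w\|^q_{L^q(B_R)}$ and the tail terms with $\operatorname{Tail}_{s,p}, \operatorname{Tail}_{a,t,q}$.

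The new feature — the reason this is a \emph{double phase} De Giorgi class — is the forcing term $\int_{B_R(x_0)} f(x,u)\,w\,\tau^q\,dx$, which must be dominated by $\big(R^\rho\theta^q + |k|^q R^{-N\tilde\epsilon}\big)\,|\operatorname{supp}\psi_+ \cap B_R(x_0)|^{1-\frac{tq}{N}+\tilde\epsilon}$. Here I would use \eqref{D-f-H}: on $\operatorname{supp}w$ one has $u \le k + w$, so $|f(x,u)| \le c_1 + c_2|u|^{l-1} \lesssim c_1 + c_2|k|^{l-1} + c_2 w^{l-1}$, and $w \le$ (something controlled). This is where the three cases diverge. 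When $c_2 = 0$ the bound is $c_1 w$, and Hölder with exponent $q_t^* / (q_t^*-1)$ on $B_R$ together with the fractional Sobolev inequality (to later absorb $\|w\|_{L^{q_t^*}}$) gives the factor $R^\rho c_1^{q/(q-1)}$ with $\rho = tq/(q-1)$ and the measure power with $\tilde\epsilon = tq/N$; take $\tilde k = -\infty$, $\tilde R = \infty$. When $c_2 > 0$ and $1 < l \le q$, restricting to $R \le \tilde R = 1$ and $k \ge \tilde k = 1$ lets one bound $|u|^{l-1} \le |u|^{q-1} \lesssim |k|^{q-1} + w^{q-1}$ (using $l \le q$ and $|u|\ge 1$ on the relevant set up to lower-order terms), and the $w^{q-1}$ contribution is again handled by Hölder/Sobolev. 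When $q < l < q_t^*$, one cannot absorb $w^{l-1}$ cheaply; instead I would invoke $u \in L^{q_\alpha^*}(\Omega)$ for the fixed $0<\alpha\le t$, use Hölder with the gap between $l$ and $q_\alpha^*$ to produce the exponent $\tilde\epsilon = 1 - l/q_\alpha^*$ and a factor depending on $\|u\|_{L^{q_\alpha^*}(\Omega)}$, and choose $\tilde R = \tilde R(N,q,t,\Lambda_2,c_2,\|u\|_{L^{q_\alpha^*}(\Omega)})$ small enough that the resulting constant stays uniform; take $\tilde k = 0$.

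With the forcing term controlled, I collect terms, use Young's inequality to absorb the small-$\delta$ copies of the local energies back to the left, and apply Lemma \ref{D-L10} (with $P(r')$ the local energy at radius $r'$, $\beta = \delta$ small, and the $D_i/(r''-r')^{\alpha_i}$ matching the four non-absorbable terms) to pass from the "$r', r''$" form of the inequality to the clean "$r < R$" form \eqref{D-DG}; this iteration step is exactly what Lemma \ref{D-L10} is designed for. Finally, the supersolution case follows by replacing $w = (u-k)_+$ with $(k-u)_+$ and testing with the corresponding nonnegative function, i.e. applying the subsolution argument to $-u$ and invoking Remark \ref{D-DGC-R}. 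The main obstacle is the bookkeeping in case (iii): making the Hölder exponents, the power of $R$, and the dependence of $\tilde R$ on $\|u\|_{L^{q_\alpha^*}}$ all fit together so that $\tilde C$ depends only on $N,p,q,s,t,\Lambda_1,\Lambda_2,c_2$; everything else is a careful but routine adaptation of the fractional De Giorgi machinery (cf. \cite{C2017, BOS2022}).
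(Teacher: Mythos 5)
Your overall strategy matches the paper's: test with $w\tau^q$ where $w=(u-k)_+$, split the bilinear form into a local part and a cross part, use discrete Leibniz-type pointwise inequalities plus Young to produce the local energies, the cutoff-controlled $L^p/L^q$ terms, the $\operatorname{Tail}$ terms, and the good cross term $\int\int\psi_+\psi_-^{p-1}K_{s,p}+\ldots$, then treat $\int f(x,u)\,w\,\tau^q\,dx$ case by case, and obtain the supersolution statement by applying the subsolution argument to $-u$ via Remark~\ref{D-DGC-R}. However, two points are misdescribed and would cause trouble if carried out as sketched.

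First, Lemma~\ref{D-L10} is \emph{not} a generic device to "pass from the $r',r''$ form to the $r<R$ form," and it is not applied in all three cases. In cases (i) and (ii), the bound on the forcing term has no self-referential part, so one takes $r_1=r$, $r_2=R$ directly and the De Giorgi inequality follows. Lemma~\ref{D-L10} is invoked only in case (iii), and the reason it is needed is a \emph{hole-filling} argument: when $q<l<q_t^*$, the estimate of $\int c_2|u|^{l-1}v$ produces a term $\int_{B_{r_2}}(1-\phi)^l\psi_+^l$ that cannot be controlled by the usual quantities. The paper handles this by enlarging the controlled quantity to
\[
P(r')=[\psi_+]_{W_{s,p}(B_{r'})}^p+[\psi_+]_{W_{t,q,a}(B_{r'})}^q+\text{(cross term)}+\int_{B_{r'}}|u|^l\,dx,
\]
so that $\int_{B_{r_2}}(1-\phi)^l\psi_+^l\le P(r_2)-P(r_1)$, while the remaining piece $\int_{B_{r_2}}\phi^l\psi_+^l$ is bounded by $\tfrac12\bigl(P(r_2)+\ldots\bigr)$ after shrinking $\tilde R$. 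Adding $C_4P(r_1)$ to both sides then yields $P(r_1)\le\gamma P(r_2)+\ldots$ with $\gamma<1$, and this is exactly what Lemma~\ref{D-L10} resolves. Your sketch omits the crucial step of including $\int|u|^l$ in $P$, without which the hole-filling collapses.

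Second, in case (i) your proposed Hölder-with-$q_t^*$-plus-fractional-Sobolev route would, in general, re-introduce a copy of the Gagliardo seminorm on the right that can only be absorbed after imposing a smallness condition on $R$; but the conclusion in case (i) requires $\tilde R=\infty$ (no smallness). The paper avoids this by a purely elementary Young's inequality with the splitting $c_1\psi_+ = \bigl((r_2-r_1)R^{t-1}c_1\bigr)\cdot\bigl(\psi_+/((r_2-r_1)R^{t-1})\bigr)$ and exponent $q/(q-1)$, which produces $R^{tq/(q-1)}c_1^{q/(q-1)}|\operatorname{supp}\psi_+\cap B_R|$ and $R^{(1-t)q}(r_2-r_1)^{-q}\|\psi_+\|_{L^q(B_R)}^q$ directly, with no Sobolev embedding, hence $\tilde R=\infty$. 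The same comment applies to case (ii). So for cases (i) and (ii) you should use the elementary Young trick rather than Sobolev; the Sobolev machinery (via Corollary~4.10 in~\cite{C2017}) is needed only in case (iii).
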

\begin{proof}
    Without loss of generality, assume $x_0=0$. Choose an $0<R<\operatorname{dist}(x_0,\partial \Omega)$ and let $0<r\leq r_1<r_2\leq R$ and define $r_3=\frac{r_1+r_2}{2}$. 
    Choose $\phi\in C_c^\infty(\mathbb{R}^N)$ with $0\leq \phi\leq 1$, $\operatorname{supp}\phi \subset B_{r_3}, \ \phi \equiv 1$ in $B_{r_1}$ and $\|\nabla \phi\|_{L^{\infty}(\mathbb{R}^N)} \leq \frac{4}{r_2-r_1}$. Given, $k\in\mathbb{R}$, define $\psi=u-k$ and $v=\phi^q \psi_+$. Then, since $u$ is a weak subsolution to problem \eqref{D-1}, we get
    \begin{equation}\label{D-L8-1}
        \iint_{Q(B_{r_2})}(A(x,y)+B(x,y))dydx\leq \int_{B_{r_2}}f(x,u)v(x)dx,
    \end{equation}
    where
    \begin{align*}
        A(x,y)&=h_p(u(x)-u(y))(v(x)-v(y))K_{s,p}(x,y),\\
        B(x,y)&=a(x,y)h_q(u(x)-u(y))(v(x)-v(y))K_{t,q}(x,y).
    \end{align*}
    Next, consider the LHS of \eqref{D-L8-1}. 
    \begin{align}\label{D-L8-6'}
        \iint_{Q(B_{r_2})}(A(x,y)+B(x,y))dydx&=I_1+I_2,
    \end{align}
    where $I_1$ and $I_2$ are given by
    \begin{align*}
        I_1&=\int_{B_{r_2}}\int_{B_{r_2}}(A(x,y)+B(x,y))dydx,\\
        I_2&=\iint_{Q(B_{r_2})\setminus (B_{r_2}\times B_{r_2})}(A(x,y)+B(x,y))dydx\nonumber\\
        &=2\int_{B_{r_2}}\int_{\mathbb{R}^N \setminus B_{r_2}}(A(x,y)+B(x,y))dydx.
    \end{align*}
    We first examine $I_1$. Let $x,y \in B_{r_2}$. We have three cases.\\
    {\bf Case 1:  $x,y \in \{u>k\}$.} Then, $u(x)=\psi_+(x)+k$ and $u(y)=\psi_+(y)+k$. Without loss of generality, assume that $u(x)\geq u(y)$. First let $u(x)>u(y)$. Consider the subcase $\phi(x)<\phi(y)$. From Lemma 4.3 in\cite{C2017}, for $l>1$ and $\epsilon>0$, we have
    \begin{equation}\label{D-L8-6}
        a^l-b^l\leq \epsilon a^l+\left(\frac{l-1}{\epsilon}\right)^{l-1}(a-b)^l, \ a\geq b\geq0.
    \end{equation}
    Denote $\phi_1=\phi^{\frac{q}{p}}$. Applying \eqref{D-L8-6} for $a=\phi_1(y),\ b=\phi_1(x),\ l=p$, and $\epsilon=\frac{\psi_+(x)-\psi_+(y)}{2\psi_+(x)}$, we deduce
    \begin{align}\label{D-L8-7}
        \phi_1(y)^p-\phi_1(x)^p\leq \frac{\psi_+(x)-\psi_+(y)}{2\psi_+(x)}\phi_1(y)^p+\left(\frac{2(p-1)\psi_+(x)}{\psi_+(x)-\psi_+(y)}\right)^{p-1}(\phi_1(y)-\phi_1(x))^p.
    \end{align}
    Observe that we also have
    \begin{equation}\label{D-L8-7'}
        v(x)-v(y)=(\psi_+(x)-\psi_+(y))\phi(y)^q-\psi_+(x)(\phi(y)^q-\phi(x)^q).
    \end{equation}
    Expanding $A(x,y)$ and substituting \eqref{D-L8-7} and \eqref{D-L8-7'}, we obtain
    \begin{align}\label{D-L8-8}
        A(x,y)&=h_p(\psi_+(x)-\psi_+(y))(v(x)-v(y))K_{s,p}(x,y)\nonumber\\
        &=\bigg((\psi_+(x)-\psi_+(y))^p\phi_1(y)^p-(\psi_+(x)-\psi_+(y))^{p-1}\psi_+(x)(\phi_1(y)^p-\phi_1(x)^p)\bigg)K_{s,p}(x,y)\nonumber\\
        &\geq \bigg((\psi_+(x)-\psi_+(y))^p\phi_1(y)^p-(\psi_+(x)-\psi_+(y))^{p-1}\psi_+(x)(\phi_1(y)^p-\phi_1(x)^p)\bigg)K_{s,p}(x,y)\nonumber\\
        &\geq \bigg(\frac{1}{2}(\psi_+(x)-\psi_+(y))^p\phi(y)^q-C(p)\psi_+(x)^p(\phi_1(y)-\phi_1(x))^p\bigg)K_{s,p}(x,y).
    \end{align}
    Similarly, applying \eqref{D-L8-6} with $a=\phi(y),\ b=\phi(x),\ l=q$, and $\epsilon=\frac{\psi_+(x)-\psi_+(y)}{2\psi_+(x)}$, we have
    \begin{equation}\label{D-L8-9'}
        \phi(y)^q-\phi(x)^q \leq \frac{\psi_+(x)-\psi_+(y)}{2\psi_+(x)}\phi(y)^q+\left(\frac{2(q-1)\psi_+(x)}{\psi_+(x)-\psi_+(y)}\right)^{q-1}(\phi(y)-\phi(x))^q.
    \end{equation}
    Therefore, using \eqref{D-L8-7'} and \eqref{D-L8-9'} in $B(x,y)$, we get
    \begin{align}\label{D-L8-9}
        B(x,y)&\geq a(x,y)\bigg((\psi_+(x)-\psi_+(y))^q\phi(y)^q-(\psi_+(x)\nonumber\\
        &\hspace{2cm}-\psi_+(y)^{q-1}\psi_+(x)(\phi(y)^q-\phi(x)^q))\bigg)K_{t,q}(x,y)\nonumber\\
        &\geq a(x,y)\bigg(\frac{1}{2}(\psi_+(x)-\psi_+(y))^q\phi(y)^q-C(q)\psi_+(x)^q (\phi(y)-\phi(x))^q\bigg)K_{t,q}(x,y).
    \end{align}
    Next we consider the subcase $\phi(x)\geq \phi(y)$. Then, clearly for $l\in\{p,q\}$,
    \begin{align}\label{D-L8-10}
        h_l(u(x)-u(y))(v(x)-v(y))&\geq  h_l(\psi_+(x)-\psi_+(y))(\phi(x)^q\psi_+(x)-\phi(x)^q\psi_+(y))\nonumber\\
        &=(\psi_+(x)-\psi_+(y))^l \phi(x)^q.
    \end{align}
    As a result of \eqref{D-L8-10}, we deduce
    \begin{align}\label{D-L8-11}
        A(x,y)+B(x,y) \geq \bigg((&\psi_+(x)-\psi_+(y))^pK_{s,p}(x,y)\nonumber\\
        &+a(x,y)(\psi_+(x)-\psi_+(y))^q K_{t,q}(x,y) \bigg)\phi(x)^q.
    \end{align}
    Thus, from \eqref{D-L8-8}, \eqref{D-L8-9} and \eqref{D-L8-11}, we obtain 
    \begin{align}\label{D-L8-12}
        A(x,y)+B(x,y)\geq \frac{1}{2}&\bigg(|\psi_+(x)-\psi_+(y)|^pK_{s,p}(x,y)\nonumber\\
        &+a(x,y)|\psi_+(x)-\psi_+(y)|^q \bigg)\max\{\phi(x),\phi(y)\}^qK_{t,q}(x,y) \nonumber\\
        &-C\bigg(\max\{\psi_+(x),\psi_+(y)\}^p|\phi_1(y)-\phi_1(x)|^pK_{s,p}(x,y) \nonumber\\
        & + \max\{\psi_+(x),\psi_+(y)\}^q a(x,y)|\phi(y)-\phi(x)|^qK_{t,q}(x,y)\bigg)
    \end{align}
    Clearly, \eqref{D-L8-12} holds when $u(x)=u(y)$.\\
    
   \noindent {\bf Case 2: $x \in \{u>k\}, \ y \in \{u\leq k\}$. }
    In this case, $v(y)=0$ and $u(x)=\psi_+(x)+k,\ u(y)=-\psi_-(y)+k$. For $l'\geq 1$ and $a,b\geq 0$, we have
    \begin{equation}\label{D-L8-13}
         (a+b)^{l'}\geq a^{l'}+b^{l'},
    \end{equation}
    Let $l \geq 2$. using \eqref{D-L8-13} with {$a=\psi_+(x), \ b=\psi_-(y), \ l'=l-1$}, we get
    \begin{align}\label{D-L8-14}
        h_l(u(x)-u(y))(v(x)-v(y))&=(\psi_+(x)+\psi_-(y))^{l-1}\phi(x)^q \psi_+(x)\nonumber\\
        &\geq \phi(x)^q(\psi_+(x)^{l-1}+\psi_-(y)^{l-1})\psi_+(x)\nonumber\\
        &=\phi(x)^q(\psi_+(x)^{l}+\psi_-(y)^{l-1}\psi_+(x))\nonumber\\
        &=\phi(x)^q \bigg(|\psi_+(x)-\psi_+(y)|^l+\psi_-(y)^{l-1}\psi_+(x)\bigg).
    \end{align}
    Now suppose $1<l<2$. Using Jensen's inequality, we obtain that for $a,b>0, \ 0<l'<1$, 
    \begin{equation}\label{D-L8-13'}
        (a+b)^{l'}\geq 2^{l'-1}\left(a^{l'}+b^{l'}\right).
    \end{equation}
    Consequently, utilizing \eqref{D-L8-13'} for {$a=\psi_+(x), \ b=\psi_-(y), \ l'=l-1$}, we deduce
    \begin{equation}\label{D-L8-14'}
        h_l(u(x)-u(y))(v(x)-v(y))\geq C\phi(x)^q \bigg(|\psi_+(x)-\psi_+(y)|^l+\psi_-(y)^{l-1}\psi_+(x)\bigg).
    \end{equation}
    Therefore, from \eqref{D-L8-14} and \eqref{D-L8-14'}, we arrive at
    \begin{align}\label{D-L8-16}
        A(x,y)+&B(x,y) \nonumber\\&\geq  C'\bigg(|\psi_+(x)-\psi_+(y)|^pK_{s,p}(x,y)+a(x,y)|\psi_+(x)-\psi_+(y)|^q K_{t,q}(x,y)\nonumber\\
        &+\psi_+(x)\psi_-(y)^{p-1}K_{s,p}(x,y)+ a(x,y)\psi_+(x)\psi_-(y)^{q-1}K_{t,q}(x,y)\bigg)\phi(x)^q, 
    \end{align}
    where $C'=\min\{1, 2^{p-1},2^{q-1}\}$.\\
    
    \noindent{\bf Case 3: $x,y \in \{u\leq k\}$.}
    In this case, since $v(x)=v(y)=0$, we have
    \begin{equation}\label{D-L8-17}
        A(x,y)+B(x,y)=0.
    \end{equation}
    Therefore, from \eqref{D-L8-12}, \eqref{D-L8-16} and \eqref{D-L8-17}, we obtain that whenever $x,y \in B_{r_2}$,
    \begin{align}\label{D-L8-18}
        A&(x,y)+B(x,y)\nonumber\\
        &\geq \frac{1}{C}\Bigg(|\psi_+(x)-\psi_+(y)|^pK_{s,p}(x,y)+a(x,y)|\psi_+(x)-\psi_+(y)|^q K_{t,q}(x,y)\nonumber\\
        &\ \ \ +\psi_+(x)\psi_-(y)^{p-1}K_{s,p}(x,y)+ a(x,y)\psi_+(x)\psi_-(y)^{q-1}K_{t,q}(x,y)\Bigg)\max\{\phi(x),\phi(y)\}^q \nonumber\\
        &\ \ \ -C\Bigg(\max\{\psi_+(x),\psi_+(y)\}^p|\phi_1(y)-\phi_1(x)|^pK_{s,p}(x,y)\nonumber\\
        & \ \ \ + a(x,y)\max\{\psi_+(x),\psi_+(y)\}^q |\phi(y)-\phi(x)|^qK_{t,q}(x,y)\bigg)\Bigg).
    \end{align}
    Therefore, substituting \eqref{D-L8-18} in $I_1$ and using the fact that $\phi\equiv 1$ on $B_{r_1}$, we get 
    \begin{align}\label{D-L8-19}
        I_1\geq& \frac{1}{C}\Bigg([\psi_+]_{W_{s,p}(B_{r_1})}^p+[\psi_+]_{W_{t,q,a}(B_{r_1})}^q+\int_{B_{r_1}}\int_{B_{r_2}}\psi_+(x)\psi_-(y)^{p-1}K_{s,p}(x,y)dydx\nonumber\\
        &\ \ \ +\int_{B_{r_1}}\int_{B_{r_2}}a(x,y)\psi_+(x)\psi_-(y)^{q-1}K_{t,q}(x,y)dydx\Bigg)\nonumber\\
        &\ \ \ -C\Bigg(\int_{B_{r_2}}\int_{B_{r_2}}\bigg(\max\{\psi_+(x),\psi_+(y)\}^p|\phi_1(y)-\phi_1(x)|^pK_{s,p}(x,y) \nonumber\\
        &\ \ \ +a(x,y)\max\{\psi_+(x),\psi_+(y)\}^q |\phi(y)-\phi(x)|^q K_{t,q}(x,y)\bigg)dydx\Bigg)\nonumber\\
        &\geq \frac{1}{C}\Bigg([\psi_+]_{W_{s,p}(B_{r_1})}^p+[\psi_+]_{W_{t,q,a}(B_{r_1})}^q\nonumber\\
        &\ \ \ +\int_{B_{r_1}}\int_{B_{r_2}}\psi_+(x)\bigg(\frac{\psi_-(y)^{p-1}}{|x-y|^{N+sp}}+\frac{a(x,y)\psi_-(y)^{q-1}}{|x-y|^{N+tq}}\bigg)dydx\Bigg)\nonumber\\
        &\ \ \ -C\int_{B_{r_2}}\int_{B_{r_2}}\bigg(\max\{\psi_+(x),\psi_+(y)\}^p|\phi_1(y)-\phi_1(x)|^pK_{s,p}(x,y)\nonumber\\
        & \ \ \ + a(x,y)\max\{\psi_+(x),\psi_+(y)\}^q |\phi(y)-\phi(x)|^qK_{t,q}(x,y)\bigg)dydx.
    \end{align}
    Observe that by the definition of $\phi$,
    $$|\nabla \phi_1|=\frac{q}{p}|\phi^{\frac{q}{p}-1}|\cdot|\nabla \phi|\leq \frac{4}{r_2-r_1}.$$
    Also, for $x,y \in B_{r_2}$, $x-y \in B_{2r_2}$.
    Therefore, we have
    \begin{align}\label{D-L8-20}
        \int_{B_{r_2}}\int_{B_{r_2}}\max\{\psi_+(x),\psi_+(y)\}^p&|\phi_1(y)-\phi_1(x)|^pK_{s,p}(x,y)dydx\nonumber\\
        &\leq \int_{B_{r_2}}\int_{B_{r_2}}(\psi_+(x)^p+\psi_+(y)^p)|\phi_1(y)-\phi_1(x)|^pK_{s,p}(x,y)dydx \nonumber\\
        &\leq2\int_{B_{r_2}}\psi_+(x)^p\int_{B_{r_2}}\|\nabla \phi_1\|_{L^{\infty}(\mathbb{R}^N)}^{p}|x-y|^pK_{s,p}(x,y)dydx\nonumber\\
        &\leq\frac{C\Lambda_1}{(r_2-r_1)^p} \int_{B_{r_2}}\psi_+(x)^p\int_{B_{2r_2}}\frac{1}{|x-y|^{N+sp-p}}dydx\nonumber\\
        &\leq C \frac{R^{(1-s)p}}{(r_2-r_1)^p}\|\psi_+\|_{L^p(B(r_2))}.
    \end{align}
    Now, since $a(.,.)$ is a bounded function, similarly, we deduce
    \begin{equation}\label{D-L8-21}
        \int_{B_{r_2}}\int_{B_{r_2}}a(x,y)\max\{\psi_+(x),\psi_+(y)\}^q(\phi(y)-\phi(x))^qK_{t,q}(x,y)dydx \leq C\frac{R^{(1-t)q}}{(r_2-r_1)^q}\|\psi_+\|_{L^q(B(r_2))}. 
    \end{equation}
    Therefore, combining \eqref{D-L8-19}, \eqref{D-L8-20} and \eqref{D-L8-21}, we get a constant $C(\Lambda_1,\Lambda_2,N,p,q,s,t)$ such that
    \begin{align}\label{D-L8-22}
        I_1&\geq \frac{1}{C}\Bigg([\psi_+]_{W_{s,p}(B_{r_1})}^p+[\psi_+]_{W_{t,q,a}(B_{r_1})}^q\nonumber\\
        &\hspace{1cm}+\int_{B_{r_1}}\int_{B_{r_2}}\psi_+(x)\bigg(\frac{\psi_-(y)^{p-1}}{|x-y|^{N+sp}}+\frac{a(x,y)\psi_-(y)^{q-1}}{|x-y|^{N+tq}})\bigg)dydx\Bigg)\nonumber\\
        &\ \ \ -C\bigg(\frac{R^{(1-s)p}}{(r_2-r_1)^p}\|\psi_+\|_{L^p(B(r_2))}+\frac{R^{(1-t)q}}{(r_2-r_1)^q}\|\psi_+\|_{L^q(B(r_2))}\bigg).
    \end{align}
    Next, we estimate $I_2$. For any $\tilde{R}>0$, we have
    \begin{align}\label{D-L8-23}
        \int_{B_{r_2}}v(x)\int_{\mathbb{R}^N \setminus B_{r_2}}&a(x,y)h_q(u(x)-u(y))K_{t,q}(x,y)dydx \nonumber\\
        &= \int_{B_{r_3}}\phi(x)\psi_+(x)\int_{\mathbb{R}^N \setminus B_{r_2}}a(x,y)h_q(u(x)-u(y))K_{t,q}(x,y)dydx\nonumber\\
        &\geq J_2-J_1,
    \end{align}
    where 
    \begin{align*}
        J_1&=\int_{B_{r_3}\cap \operatorname{supp}\psi_+}\psi_+(x)\int_{\{u(x)<u(y)\}\setminus B_{r_2}}a(x,y)(u(y)-u(x))^{q-1}K_{t,q}(x,y)dydx,\\
        J_2&=\int_{B_{r_1}\cap \operatorname{supp}\psi_+}\psi_+(x)\int_{(B_{2\tilde{R}} \cap \{u(x)\geq u(y)\})\setminus B_{r_2}}a(x,y)(u(x)-u(y))^{q-1}K_{t,q}(x,y)dydx.
    \end{align*}
    We first evaluate $J_1$. For $|x|< r_3$ and $|y|> {r_2}$, we get
    $$|x-y|\geq |y|-|x| \geq |y|-\frac{r_1+r_2}{2}\geq |y|-\frac{r_1+r_2}{2}\frac{|y|}{r_2}=\frac{(r_2-r_1)|y|}{2r_2}.$$
    Also, when $x,y \in \operatorname{supp}\psi_+$ and $u(y)>u(x)$, we get $0<u(y)-u(x)\leq u(y)-k=\psi_+(y)$. Therefore, we have
    \begin{align}\label{D-L8-24}
        J_1&\leq \Lambda_2 \int_{B_{r_3}\cap \operatorname{supp}\psi_+}\psi_+(x)\int_{\{u(x)<u(y)\}\setminus B_{r_2}}a(x,y)\frac{(u(y)-u(x))^{q-1}}{|x-y|^{N+tq}}dydx\nonumber\\
        &\leq \Lambda_2\left(\frac{2r_2}{r_2-r_1}\right)^{N+tq}\int_{B_{r_3}\cap \operatorname{supp}\psi_+}\psi_+(x)\int_{\{u(x)<u(y)\}\setminus B_{r_2}}a(x,y)\frac{(u(y)-u(x))^{q-1}}{|y|^{N+tq}}dydx\nonumber\\
        &\leq \Lambda_2\left(\frac{2R}{r_2-r_1}\right)^{N+tq}\int_{B_{r_3}\cap \operatorname{supp}\psi_+}\psi_+(x)\int_{\mathbb{R}^N\setminus B_{r_2}}a(x,y)\frac{\psi_+(y)^{q-1}}{|y|^{N+tq}}dydx\nonumber\\
        &\leq \Lambda_2\left(\frac{2R}{r_2-r_1}\right)^{N+tq}r_2^{-tq}\int_{B_{r_3}}\psi_+(x)\left(\operatorname{Tail}_{a,t,q}(\psi_+,x_0,r_2)\right)^{q-1}dx\nonumber\\
        &\leq \Lambda_2\left(\frac{2R}{r_2-r_1}\right)^{N+tq}r_2^{-tq}\|\psi_+\|_{L^1(B_{r_3})}\left(\operatorname{Tail}_{a,t,q}(\psi_+,x_0,r_2)\right)^{q-1}\nonumber\\
        &\leq \Lambda_2\left(\frac{2R}{r_2-r_1}\right)^{N+tq}r^{-tq}\|\psi_+\|_{L^1(B_{R})}\left(\operatorname{Tail}_{a,t,q}(\psi_+,x_0,r)\right)^{q-1}
    \end{align}
    Next, consider $J_2$. For $x\in \operatorname{supp}\psi_+$, we have $\operatorname{supp}\psi_- \subset \{y:u(x)\geq u(y)\}$. Hence, we get
    \begin{align}\label{D-L8-25}
        J_2&\geq \int_{B_{r_1}\cap \operatorname{supp}\psi_+}\psi_+(x)\int_{(B_{2\tilde{R}} \cap \operatorname{supp}\psi_-)\setminus B_{r_2}}a(x,y)(u(x)-u(y))^{q-1}K_{t,q}(x,y)dydx\nonumber\\
        &= \int_{B_{r_1}\cap \operatorname{supp}\psi_+}\psi_+(x)\int_{(B_{2\tilde{R}} \cap \operatorname{supp}\psi_-)\setminus B_{r_2}}a(x,y)(\psi_+(x)+\psi_-(y))^{q-1}K_{t,q}(x,y)dydx\nonumber\\
        &\geq \frac{1}{\Lambda_2}\int_{B_{r_1}\cap \operatorname{supp}\psi_+}\psi_+(x)\int_{(B_{2\tilde{R}} \cap \operatorname{supp}\psi_-)\setminus B_{r_2}}a(x,y)\frac{\psi_-(y)^{q-1}}{|x-y|^{N+tq}}dydx\nonumber\\
        &= \frac{1}{\Lambda_2}\int_{B_{r_1}}\int_{B_{2\tilde{R}}\setminus B_{r_2}}a(x,y)\frac{\psi_+(x)\psi_-(y)^{q-1}}{|x-y|^{N+tq}}dydx.
    \end{align}
    From \eqref{D-L8-23}, \eqref{D-L8-24} and \eqref{D-L8-25}, we have
    \begin{align}\label{D-L8-26}
        \int_{B_{r_2}}v(x)\int_{\mathbb{R}^N \setminus B_{r_2}}&a(x,y)h_q(u(x)-u(y))K_{t,q}(x,y)dydx \nonumber\\
        &\geq \frac{1}{\Lambda_2}\int_{B_{r_1}}\int_{B_{2\tilde{R}}\setminus B_{r_2}}a(x,y)\frac{\psi_+(x)\psi_-(y)^{q-1}}{|x-y|^{N+tq}}dydx\nonumber\\
        &\ \ \ -\Lambda_2\left(\frac{2R}{r_2-r_1}\right)^{N+tq}r^{-tq}\|\psi_+\|_{L^1(B_{r_3})}\left(\operatorname{Tail}_{a,t,q}(\psi_+,x_0,r)\right)^{q-1}.
    \end{align}
    Proceeding similarly, we also deduce
    \begin{align}\label{D-L8-27}
        \int_{B_{r_2}}v(x)\int_{\mathbb{R}^N \setminus B_{r_2}}&h_p(u(x)-u(y))K_{s,p}(x,y)dydx\nonumber\\
        &\geq \frac{1}{\Lambda_1}\int_{B_{r_1}}\int_{B_{2\tilde{R}}\setminus B_{r_2}}\frac{\psi_+(x)\psi_-(y)^{p-1}}{|x-y|^{N+sp}}dydx\nonumber\\
        &\ \ \ -\Lambda_1\left(\frac{2R}{r_2-r_1}\right)^{N+sp}r^{-sp}\|\psi_+\|_{L^1(B_{r_3})}\left(\operatorname{Tail}_{s,p}(\psi_+,x_0,r)\right)^{p-1}.
    \end{align}
    Hence, substituting \eqref{D-L8-26} and \eqref{D-L8-27} in $I_2$, we obtain
    \begin{align}\label{D-L8-28}
        I_2&\geq \frac{1}{C}\bigg(\int_{B_{r_1}}\int_{B_{2\tilde{R}}\setminus B_{r_2}}\frac{\psi_+(x)\psi_-(y)^{p-1}}{|x-y|^{N+sp}}dydx+ \int_{B_{r_1}}\int_{B_{2\tilde{R}}\setminus B_{r_2}}a(x,y)\frac{\psi_+(x)\psi_-(y)^{q-1}}{|x-y|^{N+tq}}dydx\bigg) \nonumber\\
        &\ \ \ -C\Bigg(\bigg(\frac{2R}{r_2-r_1}\bigg)^{N+sp}r^{-sp}\|\psi_+\|_{L^1(B_{r_3})}\left(\operatorname{Tail}_{s,p}(\psi_+,x_0,r)\right)^{p-1} \nonumber\\
        &\ \ \ +\bigg(\frac{2R}{r_2-r_1}\bigg)^{N+tq}r^{-tq}\|\psi_+\|_{L^1(B_{r_3})}\left(\operatorname{Tail}_{a,t,q}(\psi_+,x_0,r)\right)^{q-1}\Bigg).
    \end{align}
    Therefore, from the equations \eqref{D-L8-1}, \eqref{D-L8-6'}, \eqref{D-L8-22} and \eqref{D-L8-28}, we obtain a constant $C_1=C_1(N,p,q,\Lambda_1,\Lambda_2,s,t)>0$ such that 
    \begin{align}\label{D-L8-28'}
        [\psi_+&]_{W_{s,p}(B_{r_1})}^p+[\psi_+]_{W_{t,q,a}(B_{r_1})}^q+\int_{B_{r_1}}\int_{B_{2\tilde{R}}}\psi_+(x)\bigg(\frac{\psi_-(y)^{p-1}}{|x-y|^{N+sp}}+\frac{a(x,y)\psi_-(y)^{q-1}}{|x-y|^{N+tq}}\bigg)dydx \nonumber\\
        & \leq C_1\Bigg(\frac{R^{(1-s)}}{(r_2-r_1)^p}\|\psi_+\|_{L^p(B(r_2))}+\frac{R^{(1-t)}}{(r_2-r_1)^q}\|\psi_+\|_{L^q(B(r_2))}\nonumber\\
        &\ \ \ +\bigg(\frac{2R}{r_2-r_1}\bigg)^{N+sp}r^{-sp}\|\psi_+\|_{L^1(B_{\frac{r_1+r_2}{2}})}\left(\operatorname{Tail}_{s,p}(\psi_+,x_0,r)\right)^{p-1}\nonumber\\
        &\ \ \ +\bigg(\frac{2R}{r_2-r_1}\bigg)^{N+tq}r^{-tq}\|\psi_+\|_{L^1(B_{\frac{r_1+r_2}{2}})}\left(\operatorname{Tail}_{a,t,q}(\psi_+,x_0,r)\right)^{q-1}\Bigg)\nonumber\\
        &\ \ \ +\int_{B_{r_2}}f(x,u)v(x)dx.
    \end{align}
    Now, from \eqref{D-f-H}, we have
    \begin{align}\label{D-L8-2}
        \int_{B_{r_2}}f(x,u)v(x)dx&\leq \int_{\operatorname{supp}\psi_+ \cap B_{r_2}}(c_1+c_2|u|^{l-1})v(x)dx,
    \end{align}
    where $l\in (1,q_t^*)$. \\
    \textbf{Case 1: } $c_2=0$.\\
    Note that $(r_2-r_1)R^{t-1}\leq R^t$. Now, using \eqref{D-L8-2} and substituting the Young's inequality \eqref{D-L4-4} with $a=R^tc_1, \ b=\frac{\psi_+(x)}{(r_2-r_1)R^{t-1}}, \ \epsilon=1, \ l'=\frac{q}{q-1}$, we get
    \begin{align}\label{D-L8-2'}
         \int_{\operatorname{supp}\psi_+ \cap B_{r_2}}c_1v(x)dx&\leq \int_{\operatorname{supp}\psi_+ \cap B_{r_2}}(r_2-r_1)R^{t-1}c_1\frac{\psi_+(x)}{(r_2-r_1)R^{t-1}}dx\nonumber\\
        &\leq C_1'\Bigg(\int_{\operatorname{supp}\psi_+ \cap B_{r_2}}((r_2-r_1)R^{t-1}c_1)^{\frac{q}{q-1}}dx\nonumber\\
        &\hspace{1cm}+\int_{\operatorname{supp}\psi_+ \cap B_{r_2}}\frac{\psi_+(x)^q}{(r_2-r_1)^q R^{(t-1)q}}dx\Bigg)\nonumber\\
        &\leq C'\left(R^{\frac{tq}{q-1}}c_1^{\frac{q}{q-1}}|\operatorname{supp}\psi_+ \cap B_{R}|+\frac{R^{(1-t)q}}{(r_2-r_1)^q}\|\psi_+\|_{L^q(B_R)}^q\right).
    \end{align}
    Therefore, substituting \eqref{D-L8-2}, \eqref{D-L8-2'} in \eqref{D-L8-28'} and letting $r_1=r, \ r_2=R$, we get the desired result for $\theta=c_1^\frac{1}{q-1}, \ \tilde{C}=\max\{C_1,C_1',1\}, \ \tilde{k}=-\infty, \ \tilde{\epsilon}=\frac{tq}{N}, \ \rho=\frac{tq}{q-1}, \ \tilde{R}=\infty$.\\
    \textbf{Case 2: } $c_2>0,\ 1<l\leq q$.\\
    First, assume that $1<l\leq q$. Let $k\geq 1, R\leq 1$. Whenever $x\in \operatorname{supp} \psi_+$, we have $u(x)\geq 1$. Therefore,
    \begin{equation}\label{D-L8-3}
        |u(x)|^{l-1}\leq u(x)^{q-1}=(\psi_+(x)+k)^{q-1}\leq 2^{q-1}(\psi_+(x)^{q-1}+|k|^{q-1}).
    \end{equation}
    Observe that using the Young's inequality \eqref{D-L4-4} with $a=\psi_+(x), \ b=|k|^{q-1}, \epsilon=1, \ l'=q$, we have
    \begin{equation}\label{D-L8-2''}
        |k|^{q-1}\psi_+(x)\leq C\left(\psi_+(x))^q+|k|^q\right).
    \end{equation}
    Since $|\phi|\leq 1$, we deduce from \eqref{D-L8-2}, \eqref{D-L8-2'}, \eqref{D-L8-3},  and \eqref{D-L8-2''} that
    \begin{align}\label{D-L8-4}
        \int_{B_{r_2}}f(x,u)v(x)dx&\leq C\bigg(\int_{\operatorname{supp}\psi+ \cap B_{r_2}}c_1\phi(x)^q\psi_+(x) dx\nonumber\\
        &\hspace{1cm}+ \int_{B_{r_2}} c_2\left(\psi_+(x)^{q}+|k|^{q-1}\psi_+(x)\right)\phi(x)^q dx \bigg)\nonumber\\
        &\leq C_2\bigg(R^{\frac{tq}{q-1}}c_1^{\frac{q}{q-1}}|\operatorname{supp}\psi_+ \cap B_{R}|+\frac{R^{(1-t)q}}{(r_2-r_1)^q}\|\psi_+\|_{L^q(B_R)}^q \nonumber\\
        &\hspace{1cm}+ \int_{\operatorname{supp}\psi_+ \cap B_{r_2}}\psi_+(x)^q dx+\int_{\operatorname{supp}\psi_+ \cap B_{r_2}}\psi_+(x)^q dx \nonumber\\
        & \hspace{1cm}  + |k|^q\cdot |\operatorname{supp}\psi_+ \cap B_{r_2}| \bigg),
    \end{align}
    where the constant $C_2>0$ depends on $c_2$ also. Now, since $R\leq 1$ and $(r_2-r_1)^q<R^q \leq R^{(1-t)q}$, \eqref{D-L8-4} becomes
    \begin{align*}
        \int_{B_{r_2}}&f(x,u)v(x)dx \nonumber\\
        &\leq C_2\bigg( R^{\frac{tq}{q-1}}c_1^{\frac{q}{q-1}}|\operatorname{supp}\psi_+ \cap B_R|+\frac{R^{(1-t)q}}{(r_2-r_1)^q}\|\psi_+\|_{L^q(B_R)}^q +\frac{|k|^q}{R^{tq}}|\operatorname{supp}\psi_+ \cap B_R|\bigg).
    \end{align*}
    Thus, we get the required result for $\theta=c_1^\frac{1}{q-1}, \ \tilde{C}=\max\{C_2,1\}, \ \tilde{k}=1, \ \tilde{\epsilon}=\frac{tq}{N}, \ \rho=\frac{tq}{q-1}, \ \tilde{R}=1$.\\
    \textbf{Case 3: } $c_2>0, \ q<l<q_t^*$.\\
    Let $k\geq 0$. Using \eqref{D-L8-2'}, we get
    \begin{equation}\label{D-L8-29}
        \int_{\operatorname{supp}\psi_+ \cap B_{r_2}}c_1v(x)dx \leq C_1'\left(R^{\frac{tq}{q-1}}c_1^{\frac{q}{q-1}}|\operatorname{supp}\psi_+ \cap B_{R}|+\frac{R^{(1-t)q}}{(r_2-r_1)^q}\|\psi_+\|_{L^q(B_R)}^q\right).
    \end{equation}
   Using the fact that $|\phi|\leq 1$ and utilizing Young's inequality \eqref{D-L4-4} with $a=\phi(x)\psi_+(x), \ b=|u(x)|, \ \epsilon=1, \ l'=l$, we obtain
    \begin{align}\label{D-L8-30}
         \int_{\operatorname{supp}\psi_+ \cap B_{r_2}}c_2|u(x)|^{l-1}v(x)dx&\leq \int_{\operatorname{supp}\psi_+ \cap B_{r_2}}c_2|u(x)|^{l-1}(|u(x)|+\phi(x)^q\psi_+(x))dx\nonumber\\
         &\leq\int_{\operatorname{supp}\psi_+ \cap B_{r_2}}c_2 (|u(x)|^l+\phi(x)|u(x)|^{l-1}\psi_+(x))dx \nonumber\\
         &\leq C\int_{\operatorname{supp}\psi_+ \cap B_{r_2}} \bigg(|u(x)|^l+\left(\phi(x)^l\psi_+(x)^l+|u(x)|^{l}\right)\bigg)dx\nonumber\\
         &\leq C\int_{\operatorname{supp}\psi_+ \cap B_{r_2}}(|u(x)|^l+\phi(x)^l\psi_+(x)^l)dx.
    \end{align}
    Since $k\geq 0$, whenever $x\in \operatorname{supp}\psi_+$, we have
    \begin{align}\label{D-L8-31}
        |u(x)|^l&=|\phi(x)\psi_+(x)+k\phi(x)+(1-\phi(x))u(x)|^l\nonumber\\
        &\leq 3^l\left(\phi(x)^l\psi_+(x)^l+k^l+(1-\phi(x))^l\psi_+(x)^l\right).
    \end{align}
    Therefore, employing \eqref{D-L8-31} in \eqref{D-L8-30}, we get     
    \begin{align}\label{D-L8-32}
        \int_{\operatorname{supp}\psi_+ \cap B_{r_2}}&c_2|u(x)|^{l-1}v(x)dx\nonumber\\
        &\leq  C\int_{\operatorname{supp}\psi_+ \cap B_{r_2}} \left(\phi(x)^l\psi_+(x)^l+|k|^l+(1-\phi(x))^l\psi_+(x)^l\right)dx.
    \end{align}
    For each $r'\geq 0$, define 
    \begin{align*}
        P(r')&=[\psi_+]_{W_{s,p}(B_{r'})}^p+[\psi_+]_{W_{t,q,a}(B_{r'})}^q\nonumber\\
        &\ \ \ +\int_{B_{r'}}\int_{B_{2\tilde{R}}}\psi_+(x)\bigg(\frac{\psi_-(y)^{p-1}}{|x-y|^{N+sp}}+\frac{a(x,y)\psi_-(y)^{q-1}}{|x-y|^{N+tq}}\bigg)dydx+\int_{B_{r'}}|u(x)|^l dx.
    \end{align*}
    Combining \eqref{D-L8-28'}, \eqref{D-L8-29} and \eqref{D-L8-32}, and adding $\int_{B_{r_1}}|u(x)|^l dx$ on both sides of the obtained equation, we find a constant $C_3=C_3(N,p,q,s,t,\Lambda_1,\Lambda_2,c_2)\geq 1$ such that
    \begin{align}\label{D-L8-33}
        P(r_1)& \leq C_3\Bigg(\frac{R^{(1-s)}}{(r_2-r_1)^p}\|\psi_+\|_{L^p(B(R))}^p+\frac{R^{(1-t)}}{(r_2-r_1)^q}\|\psi_+\|_{L^q(B(R))}^q\nonumber\\
        &\ \ \ +\bigg(\frac{R}{r_2-r_1}\bigg)^{N+sp}r^{-sp}\|\psi_+\|_{L^1(B_{R})}\left(\operatorname{Tail}_{s,p}(\psi_+,x_0,r)\right)^{p-1}\nonumber\\
        &\ \ \ +\bigg(\frac{R}{r_2-r_1}\bigg)^{N+tq}r^{-tq}\|\psi_+\|_{L^1(B_{R})}\left(\operatorname{Tail}_{a,t,q}(\psi_+,x_0,r)\right)^{q-1}+R^{\frac{tq}{q-1}}c_1^{\frac{q}{q-1}}|\operatorname{supp}\psi_+ \cap B_R|\nonumber\\
        &\ \ \ +\int_{B_{r_2}} (\phi(x)^l\psi_+(x)^l+|k|^l+(1-\phi(x))^l\psi_+(x)^l)dx\Bigg).
    \end{align}
    Now, since $1-\phi \equiv 0$ in $B_{r_1}$, we have
    \begin{align}\label{D-L8-34}
        \int_{B_{r_2}}(1-\phi(x))^l\psi_+(x)^l dx\leq \int_{B_{r_2}\setminus B_{r_1}}\psi_+(x)^l dx \leq P(r_2)-P(r_1).
    \end{align}
    We define 
    \begin{equation*}
        \alpha=
        \begin{cases}
            t,&\text{ if }tq<N, \\
            \max\left\{2t-1, \frac{(2l-q)N}{2lq}\right\}, &\text{ if } tq=N.
        \end{cases}
    \end{equation*}
    Then, we have $\alpha\leq t$ and $\alpha q<N$. Now, let
    \begin{equation}\label{D-L8-EPSILON}
        \epsilon=1-\frac{l}{q_\alpha^*}.
    \end{equation}
    For the case $tq=N$, observe that
    $$q_\alpha^*=\frac{Nq}{N-\alpha q}\geq \frac{Nq}{N-\frac{(2l-q)Nq}{2lq}}= 2l>l.$$
    Therefore, $0<\tilde{\epsilon}<1=\frac{tq}{N}$. Next, suppose $tq<N$. In this case 
    $$0<1-\frac{l}{q_\alpha^*}< 1-\frac{q}{q_t^*}=\frac{tq}{N}.$$
    Also, note that
    $$\frac{l-q}{q_\alpha^*}=1-\tilde{\epsilon}-\frac{N-\alpha q}{N}=\frac{\alpha q}{N}-\tilde{\epsilon}.$$
    Note that since $\alpha\leq t$, by Theorem \ref{fSS-CE}, we have $u\in W^{\alpha, q}(\mathbb{R}^N)\subset L^{q_\alpha^*}(\mathbb{R}^N)$. Now, using H\"older's inequality, we have
    \begin{align}\label{D-L8-35}
         \int_{B_{r_2}}|k|^l dx&= \int_{B_{r_2} \cap \operatorname{supp}\psi_+}|k|^l dx \nonumber\\
         &= |k|^q \int_{B_{r_2} \cap \operatorname{supp}\psi_+} |k|^{l-q} dx \nonumber\\
         &\leq |k|^q \left(\int_{B_{r_2}\cap \operatorname{supp}\psi_+} |k|^{(l-q)\frac{q_\alpha^*}{l-q}}\right)^{\frac{l-q}{q_\alpha^*}}\left(\int_{B_{r_2}\cap \operatorname{supp}\psi_+} 1dx\right)^{1-\frac{(l-q)}{q_\alpha^*}}.
    \end{align}
   Note that whenever $x\in \operatorname{supp}\psi_+$, we have $u(x)\geq k$. Therefore, \eqref{D-L8-35} becomes
   \begin{align}\label{D-L8-36}
       \int_{B_{r_2}}|k|^l dx&\leq |k|^q \left(\int_{B_{r_2}\cap \operatorname{supp}\psi_+} |u|^{q_\alpha^*}\right)^{\frac{l-q}{q_\alpha^*}}|B_{r_2}\cap \operatorname{supp}\psi_+|^{1-\frac{(l-q)}{q_\alpha^*}}\nonumber\\
       &\leq |k|^q \|u\|_{L^{q_\alpha^*}(\Omega)}^{l-q}|B_{r_2}\cap \operatorname{supp}\psi_+|^{1-\frac{tq}{N}+\tilde{\epsilon}+\frac{(t-\alpha)q}{N}}\nonumber\\
       &\leq |k|^q \|u\|_{L^{q_\alpha^*}(\Omega)}^{l-q} |B_R|^{\frac{(t-\alpha)q}{N}}|B_{R}\cap \operatorname{supp}\psi_+|^{1-\frac{tq}{N}+\tilde{\epsilon}}\nonumber\\
       &=|k|^q \|u\|_{L^{q_\alpha^*}(\Omega)}^{l-q} |B_1|^{\frac{(t-\alpha)q}{N}}R^{(t-\alpha)q}|B_{R}\cap \operatorname{supp}\psi_+|^{1-\frac{tq}{N}+\tilde{\epsilon}}.
   \end{align}
    Now, choose $$R\leq \min\Bigg\{1, \left(\frac{1}{\|u\|_{L^{q_\alpha^*}(\Omega)}^{l-q} |B_1|^{\frac{(t-\alpha)q}{N}}} \right)^{\frac{1}{N\tilde{\epsilon}}}\Bigg\}.$$
    Then, we have
    \begin{equation}\label{D-L8-37}
        \frac{1}{R^{N\tilde{\epsilon}}}\geq \|u\|_{L^{q_\alpha^*}(\Omega)}^{l-q} |B_1|^{\frac{(t-\alpha)t}{N}} \geq \|u\|_{L^{q_\alpha^*}(\Omega)}^{l-q} |B_1|^{\frac{(t-\alpha)q}{N}} R^{(t-\alpha)q}.
    \end{equation}
    Thus, combining \eqref{D-L8-36} and \eqref{D-L8-37}, we deduce
    \begin{equation}\label{D-L8-38}
        \int_{B_{r_2}}|k|^l dx\leq \frac{|k|^q}{R^{N\tilde{\epsilon}}} |B_{R}\cap \operatorname{supp}\psi_+|^{1-\frac{tq}{N}+\epsilon}.
    \end{equation}
    Since we have $$1=\tilde{\epsilon}+\frac{l-q}{q_\alpha^*}+\frac{q}{q_\alpha^*},$$ 
    using H\"older's inequality and $u\geq \psi_+$ in $\operatorname{supp}\psi_+$, we get
    \begin{align}\label{D-L8-39}
        \int_{B_{r_2}}\phi(x)^l\psi_+(x)^l dx &=\int_{B_{r_2}}1\times\phi(x)^{l-q}\psi_+(x)^{l-q} \times \phi(x)^{q}\psi_+(x)^{q} dx\nonumber\\
        &\leq \bigg(\int_{B_{r_2}}1dx\bigg)^{\tilde{\epsilon}} \bigg(\int_{B_{r_2}}\psi_+(x)^{q_\alpha^*}dx\bigg)^{\frac{l-q}{q_\alpha^*}}\bigg(\int_{B_{r_2}}(\phi(x)\psi_+(x))^{q_\alpha^*}dx\bigg)^{\frac{q}{q_\alpha^*}}\nonumber\\
        &\leq |B_R|^{\tilde{\epsilon}}\|u\|_{L^{q_\alpha^*}(\Omega)}^{l-q}\bigg(\int_{B_{r_2}}(\phi(x)\psi_+(x))^{q_\alpha^*}dx\bigg)^{\frac{q}{q_\alpha^*}}.
    \end{align}
    By Corollary 4.10 in \cite{C2017}, and since $r_2-r_3=\frac{r_2-r_1}{2}$, we have
    \begin{align}\label{D-L8-40}
        \bigg(\int_{B_{r_2}}(\phi(x)\psi_+(x))^{q_\alpha^*}dx\bigg)^{\frac{q}{q_\alpha^*}}&\leq C\left([\phi\psi_+]_{W^{\alpha,q}(B_{r_2})}^q+\frac{1}{(r_2-r_3)^{\alpha q}}\|\phi\psi_+\|_{L^q(B_{r_3})}^q\right)\nonumber\\
        &\leq C\left([\phi\psi_+]_{W^{\alpha,q}(B_{r_2})}^q+\frac{1}{(r_2-r_1)^{\alpha q}}\|\psi_+\|_{L^q(B_{R})}^q\right).
    \end{align}
    Now, since $|\phi|\leq 1,\ |\nabla \phi|<\frac{4}{r_2-r_1}$, we have 
    \begin{align}\label{D-L8-41}
        |\phi(x)\psi_+(x)-\phi(y)\psi_+(y)|^p&\leq 2^q\left(\phi(y)^p|\psi_+(x)-\psi_+(y)|^q+\psi_+(x)^q|\phi(x)-\phi(y)|^q\right)\nonumber\\
        &\leq 2^q\left(|\psi_+(x)-\psi_+(y)|^q+\psi_+(x)^q\frac{|x-y|^q)}{|r_2-r_1|^q}\right).
    \end{align}
    Since $r_2-r_1,R\leq 1$ and $\alpha<t$, we have $R^{(1-t)q}\geq (r_2-r_1)^{(1-t)q}\geq (r_2-r_1)^{(1-\alpha)q}$ and $R^{(1-t)q} \geq R^{(1-\alpha)q}$. Therefore, substituting \eqref{D-L8-41} in \eqref{D-L8-40}, we obtain
    \begin{align}\label{D-L8-42}
        \bigg(\int_{B_{r_2}}&(\phi(x)\psi_+(x))^{q_\alpha^*}dx\bigg)^{\frac{q}{q_\alpha^*}}\nonumber\\
        &\leq C\Bigg([\psi_+]_{W^{\alpha,q}(B_{r_2})}^q+\frac{1}{(r_2-r_1)^q}\int_{B_{r_2}}\psi_+(x)^q\int_{B_{r_2}}\frac{1}{|x-y|^{N+\alpha q-q}}dydx\nonumber\\
        &\hspace{1cm}+\frac{1}{(r_2-r_1)^{\alpha q}}\|\psi_+\|_{L^q(B_{R})}^q\Bigg)\nonumber\\
        &\leq C\left([\psi_+]_{W^{\alpha,q}(B_{r_2})}^q+\frac{R^{(1-t)q}}{(r_2-r_1)^q}\|\psi_+\|_{L^q(B_{R})}^q\right).
    \end{align}
    Combining \eqref{D-L8-39} and \eqref{D-L8-42}, we get
    \begin{align}\label{D-L8-43}
        \int\limits_{B_{r_2}}\phi(x)^l\psi_+(x)^l dx&\leq C_3'|B_1|^{\tilde{\epsilon}} R^{N\tilde{\epsilon}} \|u\|_{L^{q_\alpha^*}(\Omega)}^{l-q}\bigg([\psi_+]_{W^{\alpha,q}(B_{r_2})}^q+\frac{R^{(1-t)q}}{(r_2-r_1)^q}\|\psi_+\|_{L^q(B_{R})}^q\bigg).
    \end{align}
    Choose $R\leq  \min\Bigg\{\frac{1}{2},\left(\frac{1}{2C_3C_3'|B_1|^{\tilde{\epsilon}}\|u\|_{L^{q_\alpha^*}(\Omega)}^{l-q}}\right)^{\frac{1}{N\tilde{\epsilon}}}\Bigg\}$. Then, \eqref{D-L8-43} gives
    \begin{equation}\label{D-L8-44}
        C_3\int_{B_{r_2}}\phi(x)^l\psi_+(x)^l dx\leq \frac{1}{2}\left([\psi_+]_{W^{\alpha,q}(B_{r_2})}^q+\frac{R^{(1-t)q}}{(r_2-r_1)^q}\|\psi_+\|_{L^q(B_{R})}^q\right).
    \end{equation}
    Now, using Lemma 4.5 in \cite{C2017}, we get
    \begin{equation}\label{D-L8-45}
        [\psi_+]_{W^{\alpha,q}(B_{r_2})}^q\leq [\psi_+]_{W^{t,q}(B_{r_2})}^q+\frac{2^q|B_1|}{\alpha q}\chi_{(0,2R)}(1)\|u\|_{L^q(B_{r_2})}^q=[\psi_+]_{W^{t,q}(B_{r_2})}^q.
    \end{equation}
     Therefore, substituting \eqref{D-L8-45} in \eqref{D-L8-44}, we deduce
    \begin{align}\label{D-L8-46}
        C_3\int_{B_{r_2}}\phi(x)^l\psi_+(x)^l dx&\leq \frac{1}{2}\left([\psi_+]_{W^{t,q}(B_{r_2})}^q+\frac{R^{(1-t)q}}{(r_2-r_1)^q}\|\psi_+\|_{L^q(B_{R})}^q\right) \nonumber\\
        &\leq\frac{1}{2}  \left( P(r_2)+\frac{R^{(1-t)q}}{(r_2-r_1)^q}\|\psi_+\|_{L^q(B_{R})}^q\right).
    \end{align}
    Thus, we use $\tilde{\epsilon}$ given by \eqref{D-L8-EPSILON} and choose $\theta=c_1^{\frac{1}{q-1}}, \ \tilde{k}=0, \rho=1-\frac{l}{q_\alpha^*}$ and
    $$\tilde{R}=\min\Bigg\{\frac{1}{2}, \left(\frac{1}{2C_3C_3'|B_1|^{\tilde{\epsilon}}\|u\|_{L^{q_\alpha^*}(\Omega)}^{l-q}}\right)^{\frac{1}{N\tilde{\epsilon}}}, \left(\frac{1}{\|u\|_{L^{q_\alpha^*}(\Omega)}^{l-q} |B_1|^{\frac{(t-\alpha)q}{N}}} \right)^{\frac{1}{N\tilde{\epsilon}}} \Bigg\}.$$
    Let $R\leq \tilde{R},\ k\geq \tilde{k}$. Note that $P(.)$ is a non-decreasing function in $[0,R]$. It is easy to see that since $\tilde{\epsilon}\leq \frac{tq}{N}$ and $R\leq 1$,
    \begin{equation}\label{D-L8-46'}
        |B_{R}\cap \operatorname{supp}\psi_+|\leq \frac{|B_{R}\cap \operatorname{supp}\psi_+|.|B_R|^{\frac{tq}{N}-\tilde{\epsilon}}}{|B_{R}\cap \operatorname{supp}\psi_+|^{\frac{tq}{N}-\tilde{\epsilon}}}\leq C(N,t,q)|B_{R}\cap \operatorname{supp}\psi_+|^{1-\frac{tq}{N}+\tilde{\epsilon}}.
    \end{equation}
    Then, for each $0<r<R\leq \max\{\operatorname{dist}(x_0,\partial\Omega),\tilde{R}\}, k \geq \tilde{k} $, by combining \eqref{D-L8-33}, \eqref{D-L8-34}, \eqref{D-L8-38}, \eqref{D-L8-46} and \eqref{D-L8-46'}, we get a constant $C_4=C(N,p,q,s,t,\Lambda_1,\Lambda_2,c_2)\geq 1$ such that for $r\leq r_1\leq R$,
    \begin{align}\label{D-L8-47}
        P(r_1)& \leq C_4\Bigg(P(r_2)-P(r_1)+\frac{R^{(1-s)}}{(r_2-r_1)^p}\|\psi_+\|_{L^p(B(R))}^p+\frac{R^{(1-t)}}{(r_2-r_1)^q}\|\psi_+\|_{L^q(B(R))}^q\nonumber\\
        &\ \ \ +\bigg(\frac{R}{r_2-r_1}\bigg)^{N+sp}r^{-sp}\|\psi_+\|_{L^1(B_{R})}\left(\operatorname{Tail}_{s,p}(\psi_+,x_0,r)\right)^{p-1}\nonumber\\
        &\ \ \ +\bigg(\frac{R}{r_2-r_1}\bigg)^{N+tq}r^{-tq}\|\psi_+\|_{L^1(B_{R})}\left(\operatorname{Tail}_{a,t,q}(\psi_+,x_0,r)\right)^{q-1}\nonumber\\
        &\ \ \ +(R^{\frac{tq}{q-1}}c_1^{\frac{q}{q-1}}|\operatorname{supp}\psi_+ \cap B_R|^{1-\frac{tq}{N}+\epsilon}+\frac{|k|^q}{R^{N\tilde{\epsilon}}} |B_{R}\cap \operatorname{supp}\psi_+|^{1-\frac{tq}{N}+\tilde{\epsilon}}\Bigg).
    \end{align}
    Adding $C_4P(r_1)$ on both sides of \eqref{D-L8-47} and diving by $(1+C_4)$, we get
    \begin{equation*}
        P(r_1)\leq \gamma\left(P(r_2)+D_1+\frac{D_2}{|r_2-r_1|^{p}}+\frac{D_3}{|r_2-r_1|^{q}}+\frac{D_4}{|r_2-r_1|^{N+sp}}\frac{D_5}{|r_2-r_1|^{N+tq}}\right),
    \end{equation*}
    where $\gamma=\frac{C_4}{C_4+1}$ and $D_i$ are constants given by
    \begin{align*}
        D_1&=\left(R^{\frac{tq}{q-1}}c_1^{\frac{q}{q-1}}+\frac{|k|^q}{R^{N\tilde{\epsilon}}}\right) |B_{R}\cap \operatorname{supp}\psi_+|^{1-\frac{tq}{N}+\tilde{\epsilon}}\\
        D_2&={R^{(1-s)}}\|\psi_+\|_{L^p(B(R))}^p, \ D_3={R^{(1-t)}}\|\psi_+\|_{L^q(B(R))}^q \\
        D_4&={R}^{N+sp}r_2^{-sp}\|\psi_+\|_{L^1(B_{R})}\left(\operatorname{Tail}_{s,p}(\psi_+,x_0,r)\right)^{p-1} \\
        D_5&=\bigg(\frac{R}{r_2-r_1}\bigg)^{N+tq}\|\psi_+\|_{L^1(B_{R})}\left(\operatorname{Tail}_{a,t,q}(\psi_+,x_0,r)\right)^{q-1}.
    \end{align*}
    Also note that for every $r'\in(0,R)$, we have
    $$[\psi_+]_{W_{s,p}(B_{r'})}^p+[\psi_+]_{W_{t,q,a}(B_{r'})}^q+\int_{B_{r'}}\int_{B_{2\tilde{R}}}\psi_+(x)\bigg(\frac{\psi_-(y)^{p-1}}{|x-y|^{N+sp}}+\frac{a(x,y)\psi_-(y)^{q-1}}{|x-y|^{N+tq}}\bigg)dydx \leq P(R).$$
    Hence, using Lemma \ref{D-L10}, the required result follows.
\end{proof}

\noindent The following lemma gives the local boundedness from above for the functions in the double phase De Giorgi class $\operatorname{DG}_a^+(\mathcal{D},\theta, \tilde{C}, \tilde{k},\tilde{\epsilon},\rho,\tilde{R})$.

\begin{lemma}\label{D-L9}
    Let $tq\leq N$ and $\mathcal{D}\subset\mathbb{R}^N$ be an open set. Consider $u\in \operatorname{DG}_a^+(\mathcal{D},\theta, \tilde{C}, \tilde{k},\tilde{\epsilon},\rho,\tilde{R})$ such that $\theta,\rho\geq 0, \ \tilde{C}\geq 1, \ \tilde{k}\geq 0,\ \ 0<\tilde{\epsilon}\leq \frac{tq}{N}$ and $\tilde{R}\leq \infty$. There {exist} constants $0<\rho'=\rho'(N,t,q,\tilde{\epsilon})$ and $C=C(N,s,p,t,q,\tilde{\epsilon},\tilde{C},\tilde{k})\geq 1$ such that for any $B_{2R}(x_0)\subset \mathcal{D}$ with $2R\leq \tilde{R}$ and $0<\epsilon\leq\tilde{\epsilon}$, the following holds:
    \begin{align*}
        \sup\limits_{B_R(x_0)}u&\leq \operatorname{Tail}_{s,p}((u-\tilde{k})_+,x_0,R)+ \operatorname{Tail}_{a,t,q}((u-\tilde{k})_+,x_0,R)+R^{\frac{\rho+N\tilde{\epsilon}}{q}}\theta+2\tilde{k}\\
        &\ \ \ +C\left[\frac{1}{R^N}\left(\|(u-\tilde{k})_+\|_{L^p(B_{2R}(x_0))}^p+\left(\frac{R}{2}\right)^{sp-tq}\|(u-\tilde{k})_+\|_{L^q(B_{2R}(x_0))}^q\right)\right]^{\frac{1}{p}}.
    \end{align*}
\end{lemma}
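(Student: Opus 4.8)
The plan is to run a De~Giorgi iteration adapted to the double phase scaling, in the spirit of the fractional De~Giorgi class argument of \cite{C2017}. After a translation assume $x_0=0$, and fix $B_{2R}\subset\mathcal D$ with $2R\le\tilde R$ together with $0<\epsilon\le\tilde\epsilon$. For $j\ge0$ put
\begin{equation*}
\sigma_j=R\bigl(1+2^{-j}\bigr),\qquad B_j=B_{\sigma_j},\qquad k_j=\tilde k+\bigl(1-2^{-j}\bigr)\bar k ,
\end{equation*}
with $\bar k>0$ a parameter to be fixed only at the end, so that $k_j\uparrow k_\infty:=\tilde k+\bar k$. Writing $w_j=(u-k_j)_+$, I would work with the combined energy
\begin{equation*}
Y_j=\left[\frac{1}{\sigma_j^{\,N}}\Bigl(\|w_j\|_{L^p(B_j)}^p+\sigma_j^{\,sp-tq}\|w_j\|_{L^q(B_j)}^q\Bigr)\right]^{1/p},
\end{equation*}
the weight $\sigma_j^{\,sp-tq}$ (recall $sp-tq\ge0$) being included so that the $p$- and $q$-parts transform consistently under dilation. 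The aim is to pick $\bar k$ so that $Y_j\to0$; this forces $(u-k_\infty)_+=0$ a.e.\ in $B_R$, hence $\sup_{B_R}u\le\tilde k+\bar k$, and the assertion then reads off the chosen $\bar k$ once one rewrites $\tilde k+\bar k=2\tilde k+(\bar k-\tilde k)$.

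For the inductive step I would invoke the defining inequality of $\operatorname{DG}_a^+(\mathcal D,\theta,\tilde C,\tilde k,\tilde\epsilon,\rho,\tilde R)$ at the centre $0$ with radii $(\sigma_{j+1},\sigma_j)$ and level $k_{j+1}\ge\tilde k$; this is admissible since $\sigma_j\le2R\le\tilde R$. Dropping the nonnegative cross term on the left, one is left to estimate the right-hand side, which splits into: the energy terms, $\lesssim 2^{jp}R^{-sp}\|w_{j+1}\|_{L^p(B_j)}^p+2^{jq}R^{-tq}\|w_{j+1}\|_{L^q(B_j)}^q$ (using $\sigma_j-\sigma_{j+1}=2^{-(j+1)}R$ and $\sigma_j\simeq R$); the tail terms, which after $0\le w_{j+1}\le(u-\tilde k)_+$ pointwise and $B_{\sigma_{j+1}}\supseteq B_R$ are dominated by $2^{j(N+sp)}R^{-sp}\|w_{j+1}\|_{L^1(B_j)}\operatorname{Tail}_{s,p}((u-\tilde k)_+;0,R)^{p-1}$ and its weighted counterpart; and the forcing term $\bigl(R^{\rho}\theta^q+|k_{j+1}|^qR^{-N\tilde\epsilon}\bigr)|\operatorname{supp}w_{j+1}\cap B_j|^{\,1-\frac{tq}{N}+\tilde\epsilon}$. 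I would then bring in superlevel sets by Chebyshev's inequality: from $\{w_{j+1}>0\}\subseteq\{w_j>2^{-(j+1)}\bar k\}$ one gets $|\operatorname{supp}w_{j+1}\cap B_j|\le C\,2^{jp}\bar k^{-p}\sigma_j^{\,N}Y_j^{\,p}$, and moreover $\|w_{j+1}\|_{L^1(B_j)}\le\|w_{j+1}\|_{L^p(B_j)}|\operatorname{supp}w_{j+1}\cap B_j|^{1-1/p}$. The key remark here is that the forcing-term exponent $1-\tfrac{tq}{N}+\tilde\epsilon$, once multiplied in the next step by the measure factor $|\operatorname{supp}w_{j+1}\cap B_{j+1}|^{tq/N}$ thrown off by the Sobolev inequality, adds up to $1+\tilde\epsilon>1$ — and it is precisely this that makes the forcing contribution superlinear in $Y_j$.

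To close the loop I would apply the fractional Sobolev inequality of Section~\ref{D-S2} to $w_{j+1}$ on $B_{j+1}$ in its Sobolev--Poincar\'e form on a ball — with critical target $L^{q_t^*}(B_{j+1})$ for the $q$-seminorm (legitimate since $tq\le N$) and $L^{p_s^*}(B_{j+1})$ for the $p$-seminorm when $sp<N$, a large finite exponent serving in the easier case $sp\ge N$ — followed by H\"older against $|\operatorname{supp}w_{j+1}\cap B_{j+1}|^{tq/N}$, respectively $|\operatorname{supp}w_{j+1}\cap B_{j+1}|^{sp/N}$. Combining this with the right-hand side estimate above and the Chebyshev bounds yields a recursion of the form $Y_{j+1}\le C\,b^{\,j}\,\mathcal A\,Y_j^{\,1+\kappa}$ for some $b>1$, $\kappa>0$ depending only on $N,s,p,t,q,\tilde\epsilon$, where $\mathcal A$ is a quantity built from $R$, $\tilde k$, $\theta$ and the two tails and carrying negative powers of $\bar k$. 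Choosing
\begin{equation*}
\bar k=\operatorname{Tail}_{s,p}\bigl((u-\tilde k)_+;0,R\bigr)+\operatorname{Tail}_{a,t,q}\bigl((u-\tilde k)_+;0,R\bigr)+R^{\frac{\rho+N\tilde\epsilon}{q}}\theta+\tilde k+C_\ast Y_0
\end{equation*}
with $C_\ast$ large makes $\mathcal A\le C$ and at the same time pushes $Y_0$ below the threshold $C^{-1/\kappa}b^{-1/\kappa^2}$ of the standard fast-geometric-convergence lemma (see, e.g., \cite{C2017}); hence $Y_j\to0$. Since $Y_0$ coincides, up to constants absorbed into $C$, with the bracketed quantity in the statement, we obtain $\sup_{B_R}u\le\tilde k+\bar k=2\tilde k+\operatorname{Tail}_{s,p}((u-\tilde k)_+;0,R)+\operatorname{Tail}_{a,t,q}((u-\tilde k)_+;0,R)+R^{\frac{\rho+N\tilde\epsilon}{q}}\theta+C_\ast Y_0$, which is the claim.

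The main obstacle is the simultaneous bookkeeping for the two seminorms: the energies, the distinct Sobolev exponents $p_s^*\ne q_t^*$ and the distinct H\"older factors $|\cdot|^{sp/N}\ne|\cdot|^{tq/N}$ all have to be carried through one scalar $Y_j$, and it is the weight $\sigma_j^{\,sp-tq}$ in $Y_j$, together with the hypothesis $tq\le sp$, that keeps the two contributions commensurate and delivers a single superlinearity exponent $\kappa>0$; making $\mathcal A$ harmless while keeping the final constant dependent only on the structural data is the bulk of the calculation. A secondary technical point is the forcing term, which becomes superlinear only after absorbing the Sobolev gain $|\operatorname{supp}|^{tq/N}$ into the exponent $1-\tfrac{tq}{N}+\tilde\epsilon$, and whose $|k_{j+1}|^q$-part is controlled only because $\bar k\ge\tilde k$ forces $|k_{j+1}|\le2\bar k$ — which is exactly the origin of the summand $2\tilde k$ in the final bound.
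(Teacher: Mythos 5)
Your proposal follows essentially the same route as the paper's proof: the same radii $\sigma_j=R(1+2^{-j})$ and levels $k_j=\tilde k+(1-2^{-j})\bar k$, a combined $p$--$q$ energy carrying the weight $\sigma_j^{\,sp-tq}$ (the paper's $c_j$ is comparable to $\sigma_j^{N}Y_j^{\,p}$ up to bounded factors and the $p$-th root), the $\operatorname{DG}_a^+$ inequality plus Sobolev--Poincar\'e, H\"older and Chebyshev to produce a superlinear recursion, Giusti's fast-geometric-convergence lemma, and the same final choice of $\bar k=m_0$. Your identification of the superlinearity source (the exponent $1-\tfrac{tq}{N}+\tilde\epsilon$ combining with the H\"older factor) and of the role of $\bar k\ge\tilde k$ in producing the summand $2\tilde k$ are both correct. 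So the architecture matches; there are, however, two steps that do not go through as written.

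First, the hypothesis admits $tq=N$, and there $q_t^*=\infty$, so the claim ``critical target $L^{q_t^*}(B_{j+1})$ \dots legitimate since $tq\le N$'' fails at the endpoint. You anticipate the analogous borderline on the $p$-side ($sp\ge N$) but not here. The paper avoids this by running the Sobolev--Poincar\'e step at a deliberately lowered order $\beta\le t$ with $\beta q<N$ strictly (equation~\eqref{D-L9-beta}) and then absorbing the deficit with $[w]_{W^{\beta,q}}\lesssim[w]_{W^{t,q}}+\text{l.o.t.}$ (Lemma~4.5 of \cite{C2017}); it further sets $\alpha:=\beta q/p\le s$ so that the two H\"older factors $|\operatorname{supp}|^{\alpha p/N}$ and $|\operatorname{supp}|^{\beta q/N}$ coincide, which dissolves the ``distinct exponents $sp/N\ne tq/N$'' bookkeeping you identify as the main obstacle rather than having to argue that the weight $\sigma_j^{sp-tq}$ keeps them commensurate. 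Second, the De~Giorgi inequality~\eqref{D-DG} controls only the \emph{weighted} seminorm $[\,\cdot\,]_{W_{t,q,a}}$, while the fractional Sobolev--Poincar\'e inequality needs the unweighted $[\,\cdot\,]_{W^{t,q}}$; your argument applies Sobolev to ``the $q$-seminorm'' without performing this conversion. The paper inserts the comparison $[h]_{W_{t,q}(B_{r'})}\le m\,[h]_{W_{t,q,a}(B_{r'})}$, justified by continuity and strict positivity of $a$, hence a positive lower bound on the relevant compact set; this step is minor but cannot be omitted.
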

\begin{proof}
     For each $k\in \mathbb{R}$, let $v_k=(u-k)_+$. Since $a(.,.)$ is continuous and strictly positive in $\overline{\mathcal{D}}$, we have $\inf\{a(x,y):x,y \in \overline{\mathcal{D}}\}>0$. Hence, choose $m>1$ such that
    \begin{equation}\label{D-L9-1''}
        [h]_{W_{t,q}(B_{r'})}\leq m [h]_{W_{t,q,a}(B_{r'})}
    \end{equation}
    for every $0<r'<\min\{\operatorname{dist}(x_0,\partial\mathcal{D}),\tilde{R}\}$ and $h\in W_{t,q}(\mathcal{D})$.
    Let $x_0\in \Omega$. Without loss of generality, we assume that $x_0=0$. Choose $R>0$ such that $2R\leq \min\{\tilde{R}, \operatorname{dis
    }(x_0, \partial\Omega)\}$. Define $A_k=\operatorname{supp}v_k, \ a_{k,r'}=\|v_k\|_{L^p(B_{r'})}^p$ and $b_{k,r'}=\|v_k\|_{L^q(B_{r'})}^q$ for each $k\geq k', r'>0$. We first define two sequences $(r_i)$ and $(k_i)$ by 
    $$r_i=R(1+\frac{1}{2^i}),\ k_i=k_0+m_0(1-\frac{1}{2^i}),$$
    where $i\in \{0,1,2,...\}$. Then, $(r_i)$ is a decreasing sequence with $R\leq r_i\leq 2R$ and $r_i\rightarrow R$ as $i \rightarrow\infty$. Also, $(k_i)$ is an increasing sequence converging to $k_0+m_0$, where the constant $m_0$ is to be chosen later. Fix an $i\in \{0,1,2,...\}$ and let $r_i'=\frac{r_i+r_{i+1}}{2}$. Choose $\zeta\in C_c^\infty(\mathbb{R}^N)$ satisfying $\operatorname{supp}\zeta \subset B_{r_i'},\ 0\leq \zeta \leq 1, \ \zeta \equiv 1$ in $B_{r_{i+1}}$ and $\|\nabla\zeta\|_{L^\infty(\mathbb{R}^N)}\leq \frac{c}{r_{i}-r_{i+1}}$. For simplicity, denote $v_{k_i}=v_i$ and $A_{k_i}=A_i$. Define $w=\zeta v_{{i+1}}$. Then, we have $w\in W_0^{t,q}(B_{r_i'})$. Let $0<\epsilon <\tilde{\epsilon}$. Then, let 
    \begin{equation}\label{D-L9-beta}
        \beta=\begin{cases}
            t, &\ \text{if }tq<N, \\
            \max\{2t-1, t-\frac{N\epsilon}{2q}\}, &\ \text{if }tq=N.
        \end{cases}
    \end{equation}
    Then, we have $\beta \leq t, \beta q<N$. Since $w\in W_0^{t,q}(B_{r_i'})$, by Theorem \ref{fSS-CE}, we have $w\in  W_0^{\beta,q}(B_{r_i'})$. Note that $r_i'-r_{i+1}=\frac{r_i-r_{i+1}}{2}$. Thus, using Corollary 4.10 in \cite{C2017} we get a constant $C>0$ such that
    \begin{align}\label{D-L9-1}
        \|w\|_{L^{q_\beta^*}(B_{r_i'})}^q&\leq C\frac{1-\beta}{(N-\beta q)^{q-1}}\left([w]_{W^{\beta,q}(B_{r_i'})}^q+\frac{1}{(r_i'-r_{i+1})^{\beta q}}\|w\|_{L^q(B_{r_{i+1}})}^q \right)\nonumber\\
        &\leq C(N,t,q,\epsilon)\left([w]_{W^{\beta,q}(B_{r_i'})}^q+\frac{1}{(r_i-r_{i+1})^{ \beta q}}\|v_{{i+1}}\|_{L^q(B_{r_i})}^q \right).
    \end{align}
    Using H\"older's inequality and \eqref{D-L9-1}, we deduce
    \begin{align}\label{D-L9-2}
        b_{k_{i+1},r_{i+1}}&=\int_{B_{r_{i+1}} \cap A_{{i+1}}}|v_{i+1}|^q dx \nonumber\\
        &\leq \left(\int_{B_{r_{i+1}} \cap A_{{i+1}}} |v_{i+1}|^{q_\beta^*}\right)^{\frac{N-\beta q}{N}}\left( \int_{B_{r_{i+1}} \cap A_{{i+1}}} 1 dx\right)^{\frac{\beta q}{N}}\nonumber\\
        &\leq \|w\|_{L_{q_\beta^*}(B_{r_i'})}^q|B_{r_{i+1}} \cap A_{{i+1}}|^{\frac{\beta q}{N}}\nonumber\\
        &\leq C|B_{r_{i+1}} \cap A_{{i+1}}|^{\frac{\beta q}{N}}\left([w]_{W^{\beta,q}(B_{r_i'})}^q+\frac{1}{(r_i-r_{i+1})^{\beta q}}\|v_{i+1}\|_{L^q(B_{r_i})}^q \right).
    \end{align}
    Using Lemma 4.5 in \cite{C2017}
    , we have
    \begin{align}\label{D-L9-3}
        [w]_{W^{\beta,q}(B_{r_{i'}})}^q&\leq (r_i-r_{i+1})^{tq-\beta q} [w]_{W^{t,q}(B_{r_i'})}^q+2^q\frac{|B_1|}{\beta q (r_i-r_{i+1})^{\beta q}}\chi_{[0,2r_i']}(r_i-r_{i+1})\|w\|_{L^q(B_{r_i'})}^q\nonumber\\
        &=(r_i-r_{i+1})^{tq-\beta q} [w]_{W^{t,q}(B_{r_i'})}^q+\frac{C}{(r_i-r_{i+1})^{\beta q}}\|v_{i+1}\|_{L^q(B_{r_i})}^q.
    \end{align}
    Now, for all $x,y \in \mathbb{R}^N$, we have
    \begin{equation}\label{D-L9-4}
        |w(x)-w(y)|^q\leq 2^q(\zeta(y)^q|v_{i+1}(x)-v_{i+1}(y)|^q+v_{i+1}(x)^q|\zeta(x)-\zeta(y)|^q).
    \end{equation}
    Utilizing \eqref{D-L9-4} and since $|\zeta|\leq 1, \ |\nabla \zeta| \leq \frac{C}{r_2-r_1}$, we get
    \begin{align}\label{D-L9-4'}
        [w]_{W^{t,q}(B_{r_i'})}^q&\leq 2^q\bigg([v_{i+1}]_{W^{t,q}(B_{r_i'})}^q+\int_{B_{r_i'}}v_{i+1}(x)^q\int_{B_{r_i'}}\frac{|\zeta(x)-\zeta(y)|^q}{|x-y|^{N+tq}}dydx\bigg)\nonumber\\
        &\leq 2^q \bigg([v_{i+1}]_{W^{t,q}(B_{r_i'})}^q+\int_{B_{r_i'}}v_{i+1}(x)^q\int_{B_{r_i}}\frac{\|\nabla\zeta\|_{L^\infty(\mathbb{R}^N)}^q|x-y|^{q}}{|x-y|^{N+tq}}dydx\bigg)\nonumber\\
        &\leq C\bigg([v_{i+1}]_{W^{t,q}(B_{r_i'})}^q+\frac{1}{(r_i-r_{i+1})^q}\int_{B_{r_i'}}v_{i+1}(x)^q\int_{B_{r_i}}\frac{1}{|x-y|^{N+tq-q}}dydx\bigg)\nonumber\\
        &\leq C\bigg([v_{i+1}]_{W^{t,q}(B_{r_i'})}^q+\frac{r_i^{(q-tq)}}{(r_i-r_{i+1})^q}\|v_{i+1}\|_{L^q(B_{r_i})}^q\bigg).
    \end{align}
    Combining \eqref{D-L9-3} and \eqref{D-L9-4'} with \eqref{D-L9-2} and since $\frac{r_{i}}{r_i-r_{i+1}}\geq 1$, we deduce
    \begin{align}\label{D-L9-5}
        b_{k_{i+1},r_{i+1}}&\leq \frac{C|B_{r_{i+1}} \cap A_{i+1}|^{\frac{\beta q}{N}}}{(r_i-r_{i+1})^{\beta q}}\bigg((r_i-r_{i+1})^{tq}[v_{i+1}]_{W^{t,q}(B_{r_i'})}^q+\frac{r_{i}^q}{(r_i-r_{i+1})^q}\|v_{i+1}\|_{L^q(B_{r_i})}^q \bigg).
    \end{align}
    Now, let $\alpha= \frac{\beta q}{p}$. Then, we have
    $\alpha p=\beta q <N$. Also, since $\alpha p =\beta q \leq tq \leq sp$, we have $\alpha \leq s$.
    Thus, proceeding similarly, we obtain
    \begin{align}\label{D-L9-6}
        a_{k_{i+1},r_{i+1}}
        &\leq \frac{C|B_{r_{i+1}} \cap A_{i+1}|^{\frac{\alpha p}{N}}}{(r_i-r_{i+1})^{\alpha p}}\left((r_i-r_{i+1})^{sp}[v_{i+1}]_{W^{s,p}(B_{r_i'})}^p+\frac{r_{i}^p}{(r_i-r_{i+1})^p}\|v_{i+1}\|_{L^p(B_{r_i})}^p\right).
    \end{align}
    Note that $r_i-r_{i+1}=\frac{R}{2^{i+1}}$. Therefore, define the sequence $(c_i)$ for $i\in\{0,1,2,...\}$ by
    $$c_i=a_{k_i,r_i}+\left(\frac{R}{2^{i+1}}\right)^{sp-tq}b_{k_i,r_i}=a_{k_i,r_i}+(r_i-r_{i+1})^{sp-tq}b_{k_i,r_i}.$$
    From \eqref{D-L9-5} and \eqref{D-L9-6}, we have
    \begin{align}\label{D-L9-7}
        c_{i+1}&\leq \frac{C|B_{r_{i+1}} \cap A_{i+1}|^{\frac{\alpha p}{N}}}{(r_i-r_{i+1})^{\alpha p-sp}}\bigg([v_k]_{W^{s,p}(B_{r_i'})}^p+[v_{i+1}]_{W^{t,q}(B_{r_i'})}^q\nonumber\\
        &\ \ \ +\frac{r_{i}^p}{(r_i-r_{i+1})^{p+sp}}\|v_{i+1}\|_{L^p(B_{r_i})}+\frac{r_{i}^q}{(r_i-r_{i+1})^{q+tq}}\|v_{i+1}\|_{L^q(B_{r_i'})}^q\bigg).
    \end{align}
    Now, since $u\in \operatorname{DG}_a^+(\mathcal{D},\theta, \tilde{C}, \tilde{k},\tilde{\epsilon},\rho,\tilde{R})$, using \eqref{D-L9-1''} and the fact that $\frac{r_i}{r_i-r_{i+1}}\geq1$, we deduce
    \begin{align}\label{D-L9-1'}
    [v_{i+1}]_{W_{s,p}(B_{r_i'})}^p+[v_{i+1}]_{W_{t,q}(B_{r_i'})}^q &\leq m\Bigg([v_{i+1}]_{W_{s,p}(B_{r_i'})}^p+[v_{i+1}]_{W_{t,q,a}(B_{r_i'})}^q\Bigg) \nonumber\\
        & \leq C\Bigg(\frac{r_i^{p}}{(r_i-r_i')^{p+sp}}\|v_{i+1}\|_{L^p(B(r_i))}^p\nonumber\\
        &\ \  +\frac{r_i^{q}}{(r_i-r_{i+1})^{q+tq}}\|v_{i+1}\|_{L^q(B(r_i))}^q\nonumber\\
        &\ \  +\bigg(\frac{r_i}{r_i-r_i'}\bigg)^{N+sp}R^{-sp}\|v_{i+1}\|_{L^1(B_{r_i})}\left(\operatorname{Tail}_{s,p}(v_{i+1},x_0,R)\right)^{p-1}\nonumber\\
        &\ \  +\bigg(\frac{r_i}{r_i-r_{i+1}}\bigg)^{N+tq}R^{-tq}\|v_{i+1}\|_{L^1(B_{r_i})}\left(\operatorname{Tail}_{a,t,q}(v_{i+1},x_0,R)\right)^{q-1}\nonumber\\
        &\ \  +\left(r_i^{\rho}\theta^{q}+\frac{|k_{i+1}|^q}{r_i^{N\tilde{\epsilon}}}\right)|\operatorname{supp}v_{i+1} \cap B_{r_i}|^{1-\frac{tq}{N}+\tilde{\epsilon}}\Bigg).
    \end{align}
    Substituting \eqref{D-L9-1'} in \eqref{D-L9-7}, we obtain
    \begin{align}\label{D-L9-8}
        c_{i+1}&\leq \frac{C|B_{r_{i+1}} \cap A_{i+1}|^{\frac{\alpha p}{N}}}{(r_i-r_{i+1})^{\alpha p-sp}}\Bigg(\frac{r_i^{p}}{(r_i-r_i')^{p+sp}}\|v_{i+1}\|_{L^p(B(r_i))}^p+\frac{r_i^{q}}{(r_i-r_{i+1})^{q+tq}}\|v_{i+1}\|_{L^q(B(r_i))}^q\nonumber\\
        &\ \ \ +\bigg(\frac{r_i}{r_i-r_i'}\bigg)^{N+sp}R^{-sp}\|v_{i+1}\|_{L^1(B_{r_i})}\left(\operatorname{Tail}_{s,p}(v_{i+1},x_0,R)\right)^{p-1}\nonumber\\
        &\ \ \ +\bigg(\frac{r_i}{r_i-r_{i+1}}\bigg)^{N+tq}R^{-tq}\|v_{i+1}\|_{L^1(B_{r_i})}\left(\operatorname{Tail}_{a,t,q}(v_{i+1},x_0,R)\right)^{q-1}\nonumber\\
        &\ \ \ +\left(r_i^{\rho}\theta^{q}+\frac{|k_{i+1}|^q}{r_i^{N\tilde{\epsilon}}}\right)|\operatorname{supp}v_{i+1} \cap B_{r_i}|^{1-\frac{tq}{N}+\tilde{\epsilon}}\Bigg)\nonumber\\
        &= B_1+B_2,
    \end{align}
    where $B_1$ and $B_2$ are given by
    \begin{align*}
        B_1&=C|B_{r_{i+1}} \cap A_{i+1}|^{\frac{\beta q}{N}}(r_i-r_{i+1})^{sp-tq}(r_i-r_{i+1})^{tq-\beta q}\Bigg(\frac{r_i^{q}}{(r_i-r_i')^{q+tq}}\|v_{i+1}\|_{L^q(B(r_i))}^q\nonumber\\
        &\ \ \ +\bigg(\frac{r_i}{r_i-r_{i+1}}\bigg)^{N+tq}R^{-tq}\|v_{i+1}\|_{L^1(B_{r_i})}\left(\operatorname{Tail}_{a,t,q}(v_{i+1},x_0,R)\right)^{q-1} \nonumber\\
        &\ \ \ +\left(r_i^{\frac{tq}{q-1}}c_1^{\frac{q}{q-1}}+\frac{|k_{i+1}|^q}{r_i^{N\epsilon}}\right)|\operatorname{supp}v_{i+1} \cap B_{r_i}|^{1-\frac{tq}{N}+\epsilon}\Bigg) \text{ and }\\
        B_2&=C|B_{r_{i+1}} \cap A_{i+1}|^{\frac{\alpha p}{N}}(r_i-r_{i+1})^{sp-\alpha p}\Bigg(\frac{r_i^{p}}{(r_i-r_i')^{p+sp}}\|v_{i+1}\|_{L^p(B(r_i))}^p\nonumber\\
        &\ \ \ +\bigg(\frac{r_i}{r_i-r_i'}\bigg)^{N+sp}R^{-sp}\|v_{i+1}\|_{L^1(B_{r_i})}\left(\operatorname{Tail}_{s,p}(v_{i+1},x_0,R)\right)^{p-1}\Bigg).
    \end{align*}
    Since $r_i\geq r_{i+1}$, we have $|B_{r_{i+1}}|\leq |B_{r_i}|$. Also, since $(k_i)$ is an increasing sequence, we have $ A_{i+1}\subset A_i$ and $u-k_i \geq u-k_{i+1}$ in $A_{i+1}$. Therefore,
    \begin{align*}
        \|v_{i+1}\|_{L^q(B(r_i))}^q=\int_{A_{i+1}\cap B_{r_i}}(u(x)-k_{i+1})^q dx\leq \int_{A_{i}\cap B_{r_i}}(u(x)-k_{i})^q dx=\|v_{i}\|_{L^q(B(r_i))}^q.
    \end{align*}
    Similarly, we have $\|v_{i+1}\|_{L^p(B(r_i))}^p\leq \|v_{i}\|_{L^p(B(r_i))}^p$.
    Define 
    $$\rho'=\tilde{\epsilon}-\frac{tq}{N}+\frac{\beta q}{N}.$$ Observe that by \eqref{D-L9-beta}, we have $0<\tilde{\epsilon}-\frac{\epsilon}{2}\leq\rho'\leq \frac{\beta q}{N}$. Also, we have that $\frac{|B_{r_{i}} \cap A|}{|B_{r_{i}}|}\leq 1$ for every $A\subset \mathbb{R}^N$ and $|B_{r_i}|=C r^N$ for every $r>0$. Hence we get
    \begin{align}\label{D-L9-9}
        |B_{r_{i+1}} \cap A_{i+1}&|^{\frac{\beta q}{N}}(r_i-r_{i+1})^{tq-\beta q}\frac{r_i^{q}}{(r_i-r_i')^{q+tq}}\|v_{i+1}\|_{L^q(B(r_i))}^q\nonumber\\
        &\leq C\left(\frac{|B_{r_{i}} \cap A_{i+1}|}{|B_{r_i}|}\right)^{\frac{\beta q}{N}}\frac{r_i^{q}}{(r_i-r_{i+1})^{q}}\|v_{i}\|_{L^q(B(r_i))}^q \nonumber\\
        &\leq C\left(\frac{|B_{r_{i}|} \cap A_{i+1}|}{|B_{r_i}|}\right)^{\rho'}\frac{r_i^{q}}{(r_i-r_{i+1})^{q}}b_{k_i,r_i}.
    \end{align}
    Since $k_{i}\leq k_{i+1}$, whenever $x\in A_{i+1}$, we have $v_i=u(x)-k_{i}\geq k_{i+1}-k_i\geq 0$. Thus,
        $$\|v_i\|_{L^q(B_{r_i})}^q \geq \int_{B_{r_i}\cap A_i}(k_{i+1}-k_i)^{q-1}v_i(x)dx=(k_{i+1}-k_i)^{q-1}\|v_i\|_{L^1(B_{r_i})}.$$
    It can be seen similarly that $\|v_i\|_{L^p(B_{r_i})}^p\geq (k_{i+1}-k_i)^{p-1}\|v_i\|_{L^1(B_{r_i})}$. Therefore, we deduce
    \begin{align}\label{D-L9-10}
        &|B_{r_{i+1}} \cap A_{i+1}|^{\frac{\beta q}{N}}(r_i-r_{i+1})^{tq-\beta q}\bigg(\frac{r_i}{r_i-r_{i+1}}\bigg)^{N+tq}R^{-tq}\|v_{i+1}\|_{L^1(B_{R})}\left(\operatorname{Tail}_{a,t,q}(v_{i+1},x_0,R)\right)^{q-1}\nonumber\\
        &\leq C\left(\frac{|B_{r_{i}} \cap A_{i+1}|}{|B_{r_i}|}\right)^{\frac{\beta q}{N}}\frac{r_i^{N+tq+\beta q}}{(r_i-r_{i+1})^{N+\beta q}}R^{-tq}\left(\operatorname{Tail}_{a,t,q}(v_{i+1},x_0,R)\right)^{q-1}\frac{\|v_i\|_{L^q(B_{r_i})}^q}{(k_{i+1}-k_i)^{q-1}}\nonumber\\
        &\leq C\left(\frac{|B_{r_{i}} \cap A_{i+1}|}{|B_{r_i}|}\right)^{\rho'}\frac{r_i^{N+tq+\beta q}}{(r_i-r_{i+1})^{N+\beta q}}\frac{R^{-tq}}{(k_{i+1}-k_i)^{q-1}}\left(\operatorname{Tail}_{a,t,q}(v_{i+1},x_0,R)\right)^{q-1}b_{k_i,r_i}.
    \end{align}
    Now, since $v_i \geq k_{i+1}-k_i$ in $A_{i+1}$, we also get that
    $\|v_{i+1}\|_{L^q(B_{r_i})}^q \geq (k_{i+1}-k_i)^q|B_{r_i}\cap A_{i+1}|$ and $\|v_{i+1}\|_{L^p(B_{r_i})}^p \geq (k_{i+1}-k_i)^p|B_{r_i}\cap A_{i+1}|$. Thus, we obtain that
    \begin{align}\label{D-L9-11}
        |B_{r_{i+1}} \cap A_{i+1}&|^{\frac{\beta q}{N}}(r_i-r_{i+1})^{tq-\beta q}\left(r_i^{\rho}\theta^q+\frac{|k_{i+1}|^q}{r_i^{N\tilde{\epsilon}}}\right)|B_{r_{i}} \cap A_{i+1}|^{1-\frac{tq}{N}+\tilde{\epsilon}} \nonumber\\
        &\leq|B_{r_{i+1}} \cap A_{i}|^{1+\rho'}r_i^{tq-\beta q}\left(\frac{r_i^{\rho+N\tilde{\epsilon}}\theta^q+|k_{i+1}|^q}{r_i^{N\tilde{\epsilon}}}\right)\nonumber\\
        &\leq C\left(\frac{|B_{r_{i}} \cap A_{i+1}|}{|B_{r_i}|}\right)^{\rho'}\left(r_i^{\rho+N\tilde{\epsilon}}\theta^q+|k_{i+1}|^q\right)\frac{\|v_{i+1}\|_{L^q(B_{r_i})}^q}{(k_{i+1}-k_i)^q}\nonumber\\
        &=C\left(\frac{|B_{r_{i}} \cap A_{i+1}|}{|B_{r_i}|}\right)^{\rho'}\left(\frac{r_i^{\rho+N\tilde{\epsilon}}\theta^q+|k_{i+1}|^q}{(k_{i+1}-k_i)^q}\right)b_{k_i,r_i}.
    \end{align}
    Combining the equations \eqref{D-L9-9}, \eqref{D-L9-10}, and \eqref{D-L9-11}, we get
    \begin{align}\label{D-L9-13}
        B_1 &\leq C\left(\frac{|B_{r_{i}} \cap A_{i+1}|}{|B_{r_i}|}\right)^{\rho'}\Bigg(\frac{r_i^{q}}{(r_i-r_{i+1})^{q}}\nonumber\\
        &\hspace{1cm}+\frac{r_i^{N+tq+\beta q}}{(r_i-r_{i+1})^{N+\beta q}}\frac{R^{-tq}}{(k_{i+1}-k_i)^{q-1}}\left(\operatorname{Tail}_{a,t,q}(v_{i+1},x_0,R)\right)^{q-1}\nonumber\\
        &\hspace{1cm}+\left(\frac{r_i^{\rho+N\tilde{\epsilon}}\theta^q+|k_{i+1}|^q}{(k_{i+1}-k_i)^q}\right)\Bigg)\left(\frac{R}{2^{i+1}}\right)^{sp-tq}b_{k_i,r_i}.
    \end{align}
    Similarly, we obtain that
    \begin{align}\label{D-L9-14}
        B_2 &\leq C\left(\frac{|B_{r_{i}} \cap A_{i+1}|}{|B_{r_i}|}\right)^{\rho'}\Bigg(\frac{r_i^{p}}{(r_i-r_{i+1})^{p}}+\frac{r_i^{N+sp+\alpha p}}{(r_i-r_{i+1})^{N+\alpha p}}\frac{R^{-sp}}{(k_{i+1}-k_i)^{p-1}}\nonumber\\
        &\hspace{1cm} \times(\operatorname{Tail}_{s,p}(v_{i+1},x_0,R))^{p-1}\Bigg)a_{k_i,r_i}.
    \end{align}
    Now, for each $i \in \{0,1,2,...\}$, we have $r_i \leq 2R$ and $k_{i+1}^q\leq 2^q(\tilde{k}^q+m_0^q)$. Also, since $v_{i+1}\leq v_0$ in $A_{i+1}$, we have
    \begin{align*}
        \operatorname{Tail}_{a,t,q}(v_{i+1},x_0,R) &\leq \operatorname{Tail}_{a,t,q}(v_{0},x_0,R), \text{ and }\\
        \operatorname{Tail}_{s,p}(v_{i+1},x_0,R) &\leq \operatorname{Tail}_{s,p}(v_{0},x_0,R).
    \end{align*}
    Thus, by combining \eqref{D-L9-8}, \eqref{D-L9-13} and \eqref{D-L9-14}, we deduce
    \begin{align}\label{D-L9-15}  
        c_{i+1}&\leq  C\left(\frac{|B_{r_{i}} \cap A_{i+1}|}{|B_{r_i}|}\right)^{\rho'}\Bigg(\frac{R^{p}}{(r_i-r_{i+1})^{p}}\nonumber\\
        &\ \ \ +\frac{R^{N+\alpha p}}{(r_i-r_{i+1})^{N+\alpha p}(k_{i+1}-k_i)^{p-1}}\left(\operatorname{Tail}_{s,p}(v_{0},x_0,R)\right)^{p-1}\nonumber\\
        &\ \ \ +\frac{R^{q}}{(r_i-r_{i+1})^{q}}+\frac{R^{N+\beta q}}{(r_i-r_{i+1})^{N+\beta q}(k_{i+1}-k_i)^{q-1}}\left(\operatorname{Tail}_{a,t,q}(v_{0},x_0,R)\right)^{q-1}\nonumber\\
        & \ \ \ +\bigg(\frac{R^{\rho+N\tilde{\epsilon}}\theta^q+\tilde{k}^q+m_0^q}{(k_{i+1}-k_i)^q}\bigg)\Bigg)c_i.
    \end{align}
    Now, since $R\leq r_i$, we have
    \begin{align}\label{D-L9-16}
        c_i^{\rho'}&\geq \|v_i\|_{L^p(B_{r_i})}^{p\rho'} \geq \left((k_{i+1}-k_i)^p|B_{r_i}\cap A_{i+1}|\right)^{\rho'}\nonumber\\
        &=C\left(\frac{|B_{r_i}\cap A_{i+1}|}{|B_{r_i}|}\right)^{\rho'}(k_{i+1}-k_i)^{p\rho'}{r_i^{N\rho'}}\geq C\left(\frac{|B_{r_i}\cap A_{i+1}|}{|B_{r_i}|}\right)^{\rho'}(k_{i+1}-k_i)^{p\rho'}R^{N\rho'}.
    \end{align}
    Substituting \eqref{D-L9-16} in \eqref{D-L9-15}, we have
    \begin{align}\label{D-L9-17}
        c_{i+1}&\leq  \frac{C}{(k_{i+1}-k_i)^{p\rho'}R^{N\rho'}}\Bigg(\frac{R^{p}}{(r_i-r_{i+1})^{p}}+\frac{R^{N+\alpha p}}{(r_i-r_{i+1})^{N+\alpha p}(k_{i+1}-k_i)^{p-1}}\left(\operatorname{Tail}_{s,p}(v_{0},x_0,R)\right)^{p-1}\nonumber\\
        &\ \ \ +\frac{R^{q}}{(r_i-r_{i+1})^{q}}+\frac{R^{N+\beta q}}{(r_i-r_{i+1})^{N+\beta q}(k_{i+1}-k_i)^{q-1}}\left(\operatorname{Tail}_{a,t,q}(v_{0},x_0,R)\right)^{q-1}\nonumber\\
        & \ \ \ +\left(\frac{R^{\rho+N\tilde{\epsilon}}\theta^q+\tilde{k}^q+m_0^q}{(k_{i+1}-k_i)^q}\right)\Bigg)c_i^{1+\rho'}.
    \end{align}
    Now, note that for all $i\geq 1$, we have $k_{i+1}-k_i=\frac{m_0}{2^{i+1}}$. Also, 
    $$\frac{R}{(r_i-r_{i+1})}=\frac{2^{i+1}R}{R}= 2^{i+1}.$$
    Thus, \eqref{D-L9-17} becomes
    \begin{align}\label{D-L9-18}
        c_{i+1}&\leq \frac{C2^{(N+3p+2q)i}}{m_0^{p\rho'}R^{N\rho'}}\Bigg(2+\frac{\left(\operatorname{Tail}_{s,p}(v_{0},x_0,R)\right)^{p-1}}{m_0^{p-1}}\nonumber\\
        &\ \ \ +\frac{\left(\operatorname{Tail}_{a,t,q}(v_{0},x_0,R)\right)^{q-1}}{m_0^{q-1}}+\frac{R^{\rho+N\tilde{\epsilon}}\theta^q+\tilde{k}^q+m_0^q}{m_0^q}\Bigg).
    \end{align}
    Given $\nu \in (0,1)$, let 
    \begin{align*}
        m_0&=\operatorname{Tail}_{s,p}(v_{0},x_0,R)+ \operatorname{Tail}_{a,t,q}(v_{0},x_0,R)+R^{\frac{\rho+N\tilde{\epsilon}}{q}}\theta+\tilde{k}\\
        &\ \ \ +\left[\frac{2^{\frac{N+3p+2q}{\rho'^2}}}{R^N}\left(\|v_{0}\|_{L^p(B_{2R})}^p+\left(\frac{R}{2}\right)^{sp-tq}\|v_{0}\|_{L^q(B_{2R})}^q\right)\right]^{\frac{1}{p}}.
    \end{align*}
    Then, from \eqref{D-L9-18}, we get a constant $C>0$ such that
    \begin{align*}
        c_{i+1}\leq C\frac{2^{(N+3p+2q)i}}{m_0^{p\rho'}R^{N\rho'}}c_i^{1+\rho'}
    \end{align*}
    and
    \begin{align*}
        c_0=\|u\|_{L^p(B_{2R})}^p+\left(\frac{R}{2}\right)^{sp-tq}\|u\|_{L^q(B_{2R})}^q\leq \left(\frac{C}{m_0^{p\rho'}R^{N\rho'}}\right)^{\frac{-1}{\rho'}}2^{\frac{-(N+3p+2q)}{\rho'^2}}.
    \end{align*}
    Hence, applying Lemma 7.1 in \cite{G2003},  we obtain 
    $$\|(u-\tilde{k}-m_0)_+\|_{L^p(B_{R})}^p=\lim\limits_{i\rightarrow \infty}c_i =0.$$ 
    Thus, we infer that $\sup\limits_{B_R}u\leq m_0+\tilde{k}$. This completes the proof.
\end{proof}
\noindent With the help of Lemma \ref{D-L8} and Lemma \ref{D-L9}, we prove Theorem \ref{D-T3}. 
\medskip

\noindent {\it{\bf{Proof of Theorem} \ref{D-T3}.}}
     Using Lemma \ref{D-L8}, we obtain $\theta\geq0,\ \tilde{C}\geq1,\tilde{k}\geq -\infty,\ 0<\tilde{\epsilon}\leq \frac{tq}{N},\ \rho\geq 0$ and $\tilde{R}\leq \infty$ such that $u\in \operatorname{DG}_a(\Omega,\theta,\tilde{C},\tilde{k}, \tilde{\epsilon},\rho,\tilde{R})$. Choose $k'\geq \tilde{k}$ such that $k'\geq 0$. Then using {Remark} \ref{D-DGC-R}, $\pm u\in \operatorname{DG}_a(\Omega,\theta,\tilde{C},k', \tilde{\epsilon},\rho,\tilde{R})$. Now, applying Lemma \ref{D-L9}, we deduce that for any $0<2R<\max\left\{\operatorname{dist}(x_0,\partial\Omega),\tilde{R}\right\}$, we have 
     \begin{align*}
        \sup\limits_{B_R(x_0)}|u|&\leq \operatorname{Tail}_{s,p}((u-k')_+,x_0,R)+ \operatorname{Tail}_{a,t,q}((u-k')_+,x_0,R)+R^{\frac{\rho+N\tilde{\epsilon}}{q}}\theta+2k'\\
        &\ \ \ +C\left[\frac{1}{R^N}\left(\|(u-k')_+\|_{L^p(B_{2R}(x_0))}^p+\left(\frac{R}{2}\right)^{sp-tq}\|(u-k')_+\|_{L^q(B_{2R}(x_0))}^q\right)\right]^{\frac{1}{p}}.
    \end{align*}
     This proves the desired result.
\hfill\qedsymbol{}

\section*{Conflict of interest statement}
On behalf of all authors, the corresponding author states that there is no conflict of interest.
\section*{Data availability statement}
Data sharing does not apply to this article as no datasets were generated or analysed during the current study.
\section*{Acknowledgement}
The author R. Lakshmi thanks the financial support provided by the Ministry of Education (formerly known as  MHRD), Government of India. S. Ghosh acknowledges the research facilities available at the Department of Mathematics, NIT Calicut under the DST-FIST support, Govt. of India [Project no. SR/FST/MS-1/2019/40 Dated. 07.01 2020.] S. Ghosh would also like to thank the Ghent Analysis \& PDE Centre, Ghent University, Belgium for the support during his research visit. C. Zhang was supported by the National Natural Science Foundation of China (No. 12471128). 


\end{document}